\newtheorem{assumption}{Assumption}
\newtheorem{defn}{Definition}
\newtheorem{theorem}{Theorem}
\newtheorem{lemma}{Lemma}
\DeclarePairedDelimiter\floor{\lfloor}{\rfloor}
\def\11{{\mathbbm 1}}
\def\S{{\mathcal S}}
\def\EE{{\mathbb E}}
\def\GGG{{\mathbf G}}
\def\BBB{{\mathbf B}}
\def\XXX{{\mathbf X}}
\def\ZZZ{{\mathbf Z}}
\def\AAA{{\mathbf A}}
\def\WWW{{\mathbf W}}
\def\PPP{{\mathbf P}}
\def\RR{{\mathbb R}}
\def\PP{\mathbb{P}}
\def\NN{\mathbb{N}}
\title{Convex function approximations for Markov decision processes}
\author{Jeremy Yee}
\email{jeremyyee@outlook.com.au}
\date{\today}
\begin{document}
\maketitle

\begin{abstract}
  This paper studies function approximation for finite horizon
  discrete time Markov decision processes under certain convexity
  assumptions. Uniform convergence of these approximations on compact
  sets is proved under several sampling schemes for the driving random
  variables. Under some conditions, these approximations form a
  monotone sequence of lower or upper bounding functions. Numerical
  experiments involving piecewise linear functions demonstrate that
  very tight bounding functions for the fair price of a Bermudan put
  option can be obtained with excellent speed (fractions of a cpu
  second). Results in this paper can be easily adapted to minimization
  problems involving concave Bellman functions.
\end{abstract}

\smallskip
\noindent \textbf{Keywords.}  Convexity, Dynamic programming, Function
approximation, Markov decision processes


\section{Introduction}
\label{sec_intro}

Sequential decision making under uncertainty can often be framed using
Markov decision processes (see \cite{hinderer,
  hernandezlerma_lasserreBook, puterman} and the references
within). However, due to the tedious nature of deriving analytical
solutions, some authors have suggested the use of approximate
solutions instead \cite{powell} and this view has been readily adopted
due to the advent of cheap and powerful computers. Typical numerical
methods either use a finite discretization of the state space
\cite{pages_etal2004, gray_neuhoff2006} or using a finite dimensional
approximation of the target functions
\cite{tsitsiklis_vanroy2001}. This paper will focus on the latter
approach for Markov decision processes containing only convex
functions and a finite number of actions. Convexity assumptions are
often used because it affords many theoretical benefits and the
literature is well developed \cite{rockafellar}. When there are a
finite number of actions, there are two main issues facing function
approximation methods in practice. The first involves estimating the
conditional expectation in the Bellman recursion. The second deals
with representing the reward functions and the expected value
functions using tractable objects. This paper approaches the
expectation operator using either an appropriate discretization of the
random variables or via Monte Carlo sampling.  The resulting Bellman
functions are then approximated using more tractable convex functions.
While many approaches have appeared in the literature \cite{birge1997,
  powell}, this paper differs from the usual in the following
manner. Typical approaches assume a countable state space or bounded
rewards/costs. However, many problems in practice do not satisfy this
and so this paper will not take this approach. This paper also
directly exploits convexity to extract desirable convergence
properties.  Given that decisions are often made at selected points in
time for many realistic problems, this paper assumes a discrete time
setting and this avoids the many technical details associated with
continuous time.

Regression based methods \cite{carriere1996,tsitsiklis_vanroy1999,
  longstaff_schwartz2001} have become a popular tool in the function
approximation approach. These methods represents the conditional
expected value functions as a linear combination of basis
functions. However, the choice of an appropriate regression basis is
often difficult and there are often unwanted oscillations in the
approximations.  This paper is related closest to the work done by
\cite{hernandezlerma_runggaldier1994} and \cite{hinz2014}. The authors
in \cite{hernandezlerma_runggaldier1994} considered monotone
increasing and decreasing bounding function approximations for
discrete time stochastic control problems using an appropriate
discretization of the random variable. Like this paper, they assumed
the functions in the Bellman recursion satisfy certain convexity
conditions. However, their functions are assumed to be bounded from
below unlike here. In their model, an action is chosen to minimise the
value as opposed to maximise in our setting. In addition, this paper
considers an extra layer of function approximation to represent the
resulting functions in the Bellman recursion. Nonetheless, this paper
adapts some of their brilliant insights.  In \cite{hinz2014}, the
author exploits convexity to approximate the value functions by using
convex piecewise linear functions formed using operations on the
tangents from the reward functions. The scheme in \cite{hinz2014}
results in uniform convergence on compact sets of the approximations
and has been successfully applied to many real world applications
\cite{hinz_yee_pomdp, hinz_yee_battery}. Unlike \cite{hinz2014}, this
paper will not impose global Lipschitz continuity on the functions in
the Bellman recursion and does not assume linear state dynamics. A
much more general class of function approximation is also studied
here. Moreover, this paper proves the same type of convergence under
random Monte Carlo sampling of the state disturbances as well as
deriving bounding functions. In this sense, this paper significantly
generalizes the remarkable work done by \cite{hinz2014}.

This paper is organized as follows. The next section introduces the
finite horzion discrete time Markov decision process setting and the
convexity assumptions used. The state is assumed to consist of a
discrete component and a continuous component.  This is a natural
assumption since it covers many practical applications. Convexity of
the target and approximating functions is assumed on the continuous
component of the state. A general convex function approximation scheme
is then presented in Section \ref{secApproximation}. In Section
\ref{secConvergence}, uniform convergence of these approximations to
the true value functions on compact sets is proved using
straightforward arguments. This convergence holds under various
sampling schemes for the random variables driving the state
evolution. Under conditions presented in Section \ref{secLower} and
Section \ref{secUpper}, these approximations form a non-decreasing
sequence of lower bounding or a non-increasing sequence of upper
bounding functions for the true value functions, respectively. This
approach is then demonstrated in Section \ref{secNumerical} using
piecewise linear function approximations for a Bermudan put
option. The numerical performance is impressive, both in terms of the
quality of the results and the computational times. Section
\ref{secConclusion} concludes this paper. Note that in this paper,
global Lipschitz continuity is referred to as simply Lipschitz
continuity for shorthand.


\section{Markov decision process}
\label{sec_problem}

Let $(\Omega, \mathcal{F},\PP)$ represent the probability space and
denote time by $t = 0, \dots, T$. The state is given by
$X_t := (P_t, Z_t)$ consisting of a discrete component $P_t$ taking
values in some finite set $\PPP$ and a continuous component $Z_t$
taking values in an open convex set $\ZZZ \subseteq \RR^d$. This paper
will refer to $\XXX := \PPP \times \ZZZ$ as the state space. Now at
each $t = 0, \dots, T - 1$, an action $a \in \AAA$ is chosen by the
agent from a finite set $\AAA$. Suppose the starting state is given by
$X_0 = x_0$ with probability one. Assume that $(P_{t})_{t=1}^T$
evolves as a controlled Markov chain with transition probabilities
$\alpha_{t+1}^{a, p,p'}$ for $a\in\AAA,p,p'\in\PPP,$ and
$t=0,\dots,T-1$. Here $\alpha_{t+1}^{a, p,p'}$ is the probability from
moving from $P_t=p$ to $P_{t+1}=p'$ after applying action $a$ at time
$t$. Assume the Markov process $(Z_{t})_{t=1}^T$ is governed by action
$a$ via
\begin{equation*}
Z_{t+1} = f_{t+1}(W^a_{t+1}, Z_t)
\end{equation*}
for some random variable
$W^a_{t+1}: \Omega \to \WWW \subseteq \RR^{d'}$ and measurable
transition function $f_{t+1}:\WWW \times \ZZZ \rightarrow \ZZZ$. At
time $t=0,\dots,T-1$, $W^a_{t+1}$ is the random disturbance driving
the state evolution after action $a$ is chosen. The random variables
$W^a_{t+1}$ and $f_{t+1}(W^a_{t+1},z)$ are assumed to be integrable
for $a\in\AAA$, $z\in\ZZZ$, and $t=0,\dots,T-1$.

The decision rule $\pi_{t}$ gives a mapping $\pi_{t}: \XXX \to \AAA$
which prescribes an action $\pi_{t}(x) \in \mathbf{A}$ for a given
state $x \in \XXX$. A sequence $\pi = (\pi_{t})_{t=0}^{T-1}$ of
decision rules is called a policy. For each starting state
$x_{0} \in \XXX$ and each policy $\pi$, there exists a probability
measure such that $\PP^{x_{0},\pi}(X_{0}=x_{0})=1$ and
\begin{equation*}
\PP^{x_{0}, \pi}(X_{t+1} \in \BBB \, | \, X_{0}, \dots, X_{t})=K^{\pi_{t}(X_{t})}_{t}(X_{t},\BBB )
\end{equation*}
for each measurable $\BBB \subset \XXX$ at $t=0,\dots, T-1$ where
$K^{a}_{t}$ denotes our Markov transition kernel after applying action
$a$ at time $t$. The reward at time $t=0,\dots,T-1$ is given by
$r_{t}: \XXX \times \AAA \to \RR$. A scrap value $r_{T}: \XXX \to \RR$
is collected at terminal time $t=T$. Given starting $x_{0}$, the controller's
goal is to maximize the expectation
\begin{equation*}
\label{policyvalue}
v_0^\pi(x_0) =\EE^{x_{0}, \pi}\left[\sum_{t=0}^{T-1} r_{t}(X_{t}, \pi_{t}(X_{t})) + r_{T}(X_{T})\right]
\end{equation*}
over all possible policies $\pi$. That is, to find an optimal policy
$\pi^{*}=(\pi^{*}_{t})_{t=0}^{T-1}$ satisfying
$v_0^{\pi^*}(x_0) \geq v_0^{\pi}(x_0)$ for any policy $\pi$. There are
only a finite number of possible policies in our setting.  Note that
this paper focuses only on Markov decision policies since history
dependent policies do not improve the above total expected rewards
\cite[Theorem 18.4]{hinderer}.

Let ${\mathcal K}^{a}_{t}$ represent the one-step transition operator
associated with the transition kernel. For each action $a \in \AAA$, the
operator ${\mathcal K}^{a}_{t}$ acts on functions
$v: \XXX \rightarrow \RR$ by
\begin{align}
  ({\mathcal K}^{a}_{t} v )(x) &=\int_{\XXX} v(x')K^{a}_{t}(x,dx')  \qquad \text{or} \nonumber \\
  ({\mathcal K}_{t}^{a}v)(p,z) &= \sum_{p'\in\PPP} \alpha_{t+1}^{a,p,p'}\EE[v(p',f_{t+1}(W^a_{t+1},z))].
  \label{transoperator}
\end{align}
If an optimal policy exists, it may be found using the following
dynamic programming principle. Introduce the Bellman operator
\begin{equation*}
{\mathcal T}_{t}v(p,z)=\max_{a \in \AAA} (r_{t}(p,z,a)+ {\mathcal K}^{a}_{t}v(p,z))
\label{1bel}
\end{equation*}
for $p\in\PPP$, $z\in\ZZZ$, and $t=0,\dots,T-1$. The resulting Bellman
recursion is given by
\begin{equation}
  v^{*}_{T}=r_{T}, \qquad v^{*}_{t}= {\mathcal T}_{t} v^{*}_{t+1} \label{original}
\end{equation}
for $t=T-1, \dots, 0$. If it exists, the solution
$(v^{*}_{t})_{t=0}^{T}$ gives the value functions and determines an
optimal policy $\pi^{*}$ by
\begin{equation}
  \pi^{*}_{t}(p,z)=\arg\max_{a \in\AAA}(r_{t}(p,z,a)+ {\mathcal
    K}_{t}^{a}v^{*}_{t+1}(p,z))
\label{optimal_policy}
\end{equation}
for $p \in \PPP$, $z\in\ZZZ$, and $t=0, \dots, T-1$.


\subsection{Convex value functions}

Since $\AAA$ is finite, the existence of the value functions is
guaranteed if the functions in the Bellman recursion are well
defined. This occurs, for example, when the reward, scrap, and
transition operator satisfies the following Lipschitz continuity. Let
$\| \cdot \|$ represent some norm below.

\begin{theorem}
  If $r_t(p,z,a)$, $r_T(p,z)$, and ${\mathcal K}_{t}^{a}\|z\|$ are
  Lipschitz continuous in $z$ for $a \in \AAA$, $p \in \PPP$, and
  $t = 0, \dots, T-1$, then the value functions exists.
\end{theorem}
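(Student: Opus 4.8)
The plan is to argue by backward induction on $t$ that each Bellman function $v_t^*$ from the recursion \eqref{original} is everywhere finite and measurable, which is exactly what is needed for the recursion to be well defined and hence for the value functions to exist. The natural induction hypothesis is a quantitative one: I would show that each $v_t^*$ obeys an at most linear growth bound of the form $|v_t^*(p,z)| \le C_t + L_t\|z\|$ for constants $C_t, L_t$ and all $p \in \PPP$, $z \in \ZZZ$. The reason to track linear growth rather than mere finiteness is that finiteness of the conditional expectation $\EE[v_{t+1}^*(p', f_{t+1}(W^a_{t+1},z))]$ at the next step requires a growth bound on $v_{t+1}^*$ that is integrable against the law of $f_{t+1}(W^a_{t+1},z)$, and linear growth is precisely matched to the integrability of $f_{t+1}(W^a_{t+1},z)$ assumed in the model.

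First I would record the elementary fact that Lipschitz continuity in $z$ implies at most linear growth in $z$: fixing any reference point $z_0 \in \ZZZ$, a Lipschitz function $g$ satisfies $|g(z)| \le |g(z_0)| + L\|z_0\| + L\|z\|$. Applying this to the hypotheses, the rewards $r_t(p,\cdot,a)$ and $r_T(p,\cdot)$ and the map $z \mapsto {\mathcal K}_t^a\|z\|$ all have at most linear growth; moreover ${\mathcal K}_t^a\|z\| = \sum_{p'} \alpha_{t+1}^{a,p,p'}\EE[\|f_{t+1}(W^a_{t+1},z)\|]$ is finite for every $z$ by the integrability of $f_{t+1}(W^a_{t+1},z)$.

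The base case $v_T^* = r_T$ is immediate from the Lipschitz assumption. For the inductive step, assuming $|v_{t+1}^*(p',z')| \le C_{t+1} + L_{t+1}\|z'\|$ with $v_{t+1}^*$ measurable, I would bound
\begin{equation*}
\EE\bigl[|v_{t+1}^*(p', f_{t+1}(W^a_{t+1},z))|\bigr] \le C_{t+1} + L_{t+1}\,{\mathcal K}_t^a\|z\| \le C_{t+1} + L_{t+1}(\tilde C + \tilde L\|z\|),
\end{equation*}
using first the growth bound on $v_{t+1}^*$ and then the linear growth of ${\mathcal K}_t^a\|z\|$. Since $\sum_{p'}\alpha_{t+1}^{a,p,p'} = 1$, this shows ${\mathcal K}_t^a v_{t+1}^*(p,z)$ is finite and of at most linear growth for each $a$, while measurability is inherited from that of $v_{t+1}^*$ and $f_{t+1}$ together with measurability of the integral in its parameter. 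Because $\AAA$ and $\PPP$ are finite, the maximum over $a$ and the weighted sum over $p'$ preserve both measurability and the linear growth bound, so $v_t^* = {\mathcal T}_t v_{t+1}^*$ again satisfies a bound $|v_t^*(p,z)| \le C_t + L_t\|z\|$, closing the induction.

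I expect the only genuine obstacle to be the finiteness of the conditional expectation in the inductive step; everything else (finite maxima, finite sums, measurability) is routine bookkeeping. It is worth stressing what the argument does not claim: I would not attempt to propagate Lipschitz continuity of $v_t^*$ itself, since controlling $|{\mathcal K}_t^a v_{t+1}^*(p,z_1) - {\mathcal K}_t^a v_{t+1}^*(p,z_2)|$ would require a bound on $\EE[\|f_{t+1}(W^a_{t+1},z_1) - f_{t+1}(W^a_{t+1},z_2)\|]$ that the hypotheses do not supply. Linear growth is the weakest property stable under the recursion that still forces every expectation to converge, and that is all existence requires.
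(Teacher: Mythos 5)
Your proof is correct, and it rests on exactly the same quantitative fact as the paper's: Lipschitz continuity forces at most linear growth, and the hypothesis on ${\mathcal K}_{t}^{a}\|z\|$ lets that growth propagate through the transition operator. The difference is one of packaging. The paper verifies that $1+\|z\|$ is an \emph{upper bounding function} in the sense of B\"auerle--Rieder (their Definition 2.4.1) --- i.e. the rewards and the kernel image of $1+\|z\|$ are dominated by a constant multiple of $1+\|z\|$ --- and then cites their Proposition 2.4.2 to conclude there are no integrability issues, so the backward induction is outsourced to a standard result. You instead inline that induction, carrying the bound $|v_t^*(p,z)|\le C_t+L_t\|z\|$ through the recursion step by step, using integrability of $f_{t+1}(W^a_{t+1},z)$ for finiteness of ${\mathcal K}_t^a\|z\|$ at a point and finiteness of $\AAA$ and $\PPP$ for the max and the weighted sum. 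What the paper's route buys is brevity and a link to the general MDP framework, which also absorbs the measurability bookkeeping that you (reasonably, but somewhat informally) dismiss as routine; what your route buys is a self-contained argument that makes visible precisely where each hypothesis enters, including your correct observation that one cannot, and need not, propagate Lipschitz continuity of $v_t^*$ itself through the recursion. Mathematically the two proofs are the same; yours is the unfolded version of the citation.
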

\begin{proof} If the functions in the Bellman recursion have an upper
  bounding function \cite[Definition 2.4.1]{bauerle_rieder}, there
  will be no integrability issues \cite[Proposition
  2.4.2]{bauerle_rieder}. For $t =0, \dots, T-1$, $a \in \AAA$ and
  $p \in \PPP$, Lipschitz continuity yields
  \[ |r_t(p, z, a)| \leq |r_t(p, z', a)| + c\|z - z'\| \quad
    \text{and} \quad |r_T(p, z)| \leq |r_T(p, z')| + c\|z - z'\| \]
  for some constant $c$ and for $z,z'\in\ZZZ$. Therefore,
  $|r_t(p, z, a)| \leq c' (1 + \|z\|)$ and
  $|r_T(p, z)| \leq c' (1 + \|z\|)$ for some constant $c'$ for
  $p\in\PPP$ and $a\in\AAA$. By assumption on
  ${\mathcal K}_{t}^{a}\|z\|$, for $a \in \AAA$, $p\in\PPP$, and
  $t = 0, \dots, T-1$,
\[\bigg|\int_{\XXX} 1 + \| z'\| K^{a}_{t}((p,z),d(p',z'))\bigg| \leq c''(1 + \|z\|)\]
for some constant $c''$. Therefore, an upper bounding function for
our Markov decision process is given by $1 + \|z\|$ with constant
coefficient given by $\max\{c', c''\}$. \end{proof}

Note that on a finite dimensional vector space ($d<\infty$), all norms
are equivalent and so the choice of $\|\cdot\|$ is not particularly
important. The existence of the value functions and an optimal policy
is problem dependent and so the following simplifying assumption is
made to avoid any further technical diversions.

\begin{assumption} \label{assExist} Bellman functions $r_T(p,z)$,
  $r_t(p,z,a)$, and $\mathcal{K}^a_{t}v^*_{t+1}(p,z)$ are well defined
  and real-valued for all $p\in\PPP$, $z\in\ZZZ$, $a\in\AAA$, and
  $t=0,\dots,T-1$.
\end{assumption}

Under the above assumption, value functions $(v^*_t)_{t=0}^T$ and an
optimal policy $\pi^*$ exists. Further, they can be found via the
Bellman recursion presented earlier. Since convex functions plays a
central role in this paper and to avoid any potential misunderstanding
regarding this, the following definition is provided.

\begin{defn}
A function $h:\ZZZ\to\RR$ is convex in $z$ if
$$
h(\alpha z' + (1-\alpha)z'') \leq \alpha h(z') + (1-\alpha) h(z'') 
$$
for $z',z''\in\ZZZ$ and $0 \leq \alpha \leq 1$.
\end{defn}

Note that since $\ZZZ$ is open, convexity in $z$ automatically implies
continuity in $z$.  Now impose the following continuity and convexity
assumptions.

\begin{assumption} \label{assContinuity} Let $f_t(w,z)$ be continuous
  in $w$ for $z\in\ZZZ$ and $t=1,\dots,T$.
\end{assumption}

\begin{assumption} \label{assConvex} Let $r_t(p,z,a)$ and $r_T(p,z)$
  be convex in $z$ for $a \in \AAA$, $p \in \PPP$ and
  $t = 0, \dots, T-1$.  If $h(z)$ is convex in $z$, then
  $\mathcal{K}^a_th(z)$ is also convex in $z$ for $a\in\AAA$.
\end{assumption}

Assumption \ref{assContinuity} will be used to apply the continuous
mapping theorem in the proofs and Assumption \ref{assConvex}
guarantees that all the value functions $(v^{*}_{t}(p,z))_{t=0}^{T}$
are convex in $z$ for $t=0,\dots,T$ and $p\in\PPP$. Convexity can
be preserved by the transition operator (as stated in Assumption
\ref{assConvex}) due to the interaction between the value function and
the transition function. Let us explore just a few examples for $d=1$:
\begin{itemize}
\item If $v(p,z)$ is non-decreasing convex in $z$ and $f_{t+1}(w,z)$
  is convex in $z$, the composition $v(p, f_{t+1}(w,z))$ is convex in
  $z$.
\item Similarly, $v(p,f_{t+1}(w,z))$ is convex in $z$ if $v(p,z)$ is
  convex non-increasing in $z$ and $f_{t+1}(w,z)$ is concave in $z$.
\item If $f_{t+1}(w,z)$ is affine linear in $z$,
  ${\mathcal K}_{t}^{a}v(p, z)$ is also convex in $z$ if $v(p,z)$ is
  convex in $z$. Note this is a consequence of the first two cases
  since an affine linear function is both convex and concave.
\item If the space $\ZZZ$ can be partitioned so that in each component
  any of the above cases hold, the function composition
  $v(p,f_{t+1}(w,z))$ is also convex in $z$.
\end{itemize}


\section{Approximation} \label{secApproximation}

Denote $\mathbf{G}_{}^{(m)}\subset\ZZZ$ to be a $m$-point grid. This
paper adopts the convention that
$\mathbf{G}_{}^{(m)} \subset \mathbf{G}_{}^{(m+1)}$ and
$\cup_{m=1}^\infty \mathbf{G}_{}^{(m)}$ is dense in $\ZZZ$.  Suppose
$h:\ZZZ \rightarrow \RR$ is a continuous function and introduce some
function approximation operator ${\mathcal S}_{\mathbf{G}^{(m)}}$
dependent on the grid (this will be clarified shortly) which
approximates $h$ using another more tractable continuous function.
Some examples involving piecewise linear approximations are depicted
in Figure \ref{pic_schemes}.

\begin{figure}[h!]
  \centering
  \begin{tikzpicture}[yscale = 1.1, xscale=1.25]
    \draw [<->] (0,0) -- (6,0);	
    \draw [gray,dashed] (1,-.1) -- (1,4);
    \node [gray,below] at (1,0) {$g^{(i)}$};
    \draw [gray,dashed] (3,-.1) -- (3,4);
    \node [gray,below] at (3,0) {$g^{(i+1)}$};
    \draw [gray,dashed] (5,-.1) -- (5,4);
    \node [gray,below] at (5,0) {$g^{(i+2)}$};
    \draw (0,1) parabola (6,4);
    \draw [red] (0,0.9) -- (1.8,1.2);
    \draw [red] (1.8,1.2) -- (4,2.2);
    \draw [red] (4,2.2) -- (6,3.9);
    \draw [<->] (6.5,0) -- (12.5,0);
    \draw [gray,dashed] (7.5,-.1) -- (7.5,4);
    \node [gray,below] at (7.5,0) {$g^{(i)}$};
    \draw [gray,dashed] (9.5,-.1) -- (9.5,4);
    \node [gray,below] at (9.5,0) {$g^{(i+1)}$};
    \draw [gray,dashed] (11.5,-.1) -- (11.5,4);
    \node [gray,below] at (11.5,0) {$g^{(i+2)}$};
    \draw (6.5,3.5) .. controls (9,0) .. (12.5,3.5);
    \draw [red] (6.5,3.7) -- (7.5,2.15);
    \draw [red] (7.5,2.15) -- (9.5,0.95);
    \draw [red] (9.5,0.95) -- (11.5,2.5);
    \draw [red] (11.5,2.5) -- (12.5,3.7);
  \end{tikzpicture}
  \caption{Approximating the smooth target functions using piecewise
    linear functions.}
  \label{pic_schemes}
\end{figure}
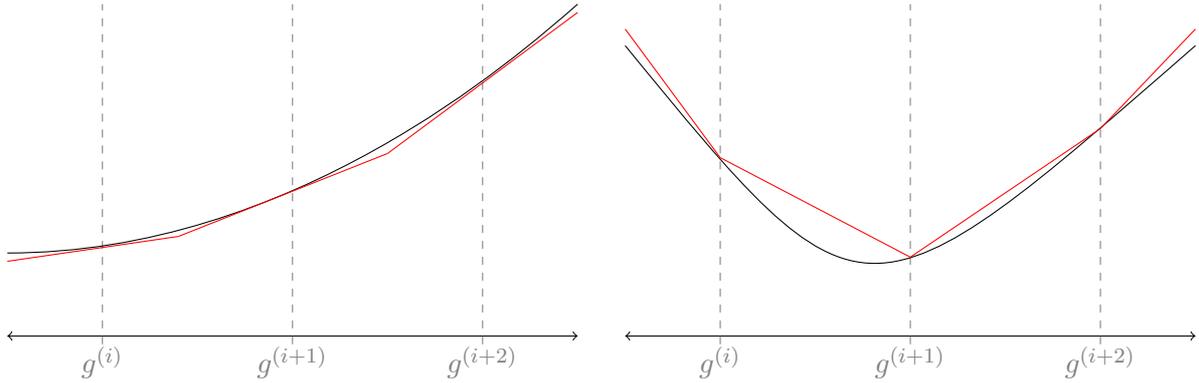

The first step in dealing with the Bellman recursion is to approximate
the transition operator \eqref{transoperator}. For each time
$t = 0, \dots, T-1$ and action $a \in \AAA$, choose a suitable
$n$-point disturbance sampling $(W^{a,(n)}_{t+1}(k))_{k=1}^{n}$ with
weights $(\rho^{a,(n)}_{t+1}(k))_{k=1}^{n}$.  Define the modified
transition operator by
\begin{equation}
{\mathcal K}^{a, (n)}_{t}v(p,z) = \sum_{p'\in\PPP}\alpha_{t+1}^{a,p,p'} \sum_{k=1}^{n}\rho^{a,(n)}_{t+1}(k) v(p', f_{t+1}(W^{a,(n)}_{t+1}(k),z))
\label{modKernel}
\end{equation}
and the modified Bellman operator by
\begin{equation} {\mathcal T}^{(m,n)}_{t}v(p, z) = \max_{a \in
    \AAA}\left(\mathcal{S}_{\mathbf{G}^{(m)}} {r_{t}(p, z, a)} +
    \mathcal{S}_{\mathbf{G}^{(m)}} {\mathcal K}^{a, (n)}_{t}v(p,
    z)\right)
\label{modBell}
\end{equation}
where $v(p,z)$ is a function continuous in $z$ for $p\in\PPP$. In the
above, the approximation scheme ${{\mathcal S}_{\mathbf{G}^{(m)}}}$ is
applied to the functions for each $p\in\PPP$ and $a\in\AAA$.  The
resulting backward induction
\begin{equation} 
  v^*_{T} = r_{T}, 
  \qquad v^{(m, n)}_{T-1} = {\mathcal T}^{(m, n)}_{T-1}{\mathcal S}_{\mathbf{G}^{(m)}} v^*_{T},
    \qquad v^{(m, n)}_{t} = {\mathcal T}^{(m, n)}_{t}v^{(m, n)}_{t+1}   \label{modBell1}
\end{equation}
for $t=T-2, \dots,0$ gives the modified value functions
$(v^{(m,n)}_{t})_{t=0}^{T-1}$. From Assumption \ref{assContinuity} and
the continuity of the modified value functions in $z$ for $p\in\PPP$,
it is clear that $v^{(m,n)}_t(p, f_t(w,z))$ is continuous in $w$ for
$p\in\PPP$, $z\in\ZZZ$, and $t=1,\dots,T-1$ since we have the
composition of continuous functions.

The central idea behind this paper is to approximate the original
problem in \eqref{original} with a more tractable problem given by
\eqref{modBell1}. Therefore, one would like the functions in modified
Bellman recursion to resemble their true counterparts. Since
Assumption \ref{assConvex} imposes convexity on the functions in the
original Bellman recursion and ${\mathcal S}_{\mathbf{G}^{(m)}}$ is used
to approximate these functions, the following assumption on convexity
and pointwise convergence on the dense grid is only natural.

\begin{assumption} \label{piecewiseS} For all convex functions
  $h:\ZZZ\to\RR$, suppose that ${\mathcal S}_{\mathbf{G}^{(m)}}h(z)$
  is convex in $z$ for $m\in\NN$ and that
  $\lim_{m\to\infty} {\mathcal S}_{\mathbf{G}^{(m)}}h(z) = h(z)$ for
  $z\in\cup_{m=1}^\infty \mathbf{G}_{}^{(m)}$.\end{assumption}

The following assumption is now made to hold throughout this paper to
guarantee the convexity of the modified value functions.

\begin{assumption} \label{assSampleConvex} Assume
  ${\mathcal K}^{a, (n)}_{T-1}{\mathcal S}_{\mathbf{G}^{(m)}} v^*_{T}(p, z)$ and
  ${\mathcal K}^{a, (n)}_{t}v_{t+1}^{(m,n)}(p, z)$ are convex in $z$ for
  $m,n\in\NN$, $a \in \AAA$, $p \in \PPP$ and $t=T-2, \dots,0$.
\end{assumption}

\begin{theorem} The functions in the modified Bellman recursion
  \eqref{modBell1} are convex in $z$ for $p\in\PPP$ under
  Assumptions \ref{assConvex}, \ref{piecewiseS} and
  \ref{assSampleConvex}.
\end{theorem}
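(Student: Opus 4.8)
The plan is to prove the statement by backward induction on $t$, exploiting three elementary facts about convex functions: that $\mathcal{S}_{\mathbf{G}^{(m)}}$ maps convex functions to convex functions (Assumption \ref{piecewiseS}), that a finite sum of convex functions is convex, and that the pointwise maximum over the finite action set $\AAA$ of convex functions is convex. Combined with Assumption \ref{assSampleConvex}, which directly supplies the convexity of the discretized transition-operator terms, these reduce the claim to routine bookkeeping that tracks convexity through each layer of the operator $\mathcal{T}^{(m,n)}_t$ in \eqref{modBell}.

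First I would handle the terminal step, which produces $v^{(m,n)}_{T-1}$. Since $r_T$ is convex in $z$ by Assumption \ref{assConvex}, so is $v^*_T$, and hence $\mathcal{S}_{\mathbf{G}^{(m)}} v^*_T(p,\cdot)$ is convex by Assumption \ref{piecewiseS}. Fixing $p\in\PPP$ and $a\in\AAA$, the reward contribution $\mathcal{S}_{\mathbf{G}^{(m)}} r_{T-1}(p,\cdot,a)$ is convex by Assumptions \ref{assConvex} and \ref{piecewiseS}, while Assumption \ref{assSampleConvex} asserts that $\mathcal{K}^{a,(n)}_{T-1}\mathcal{S}_{\mathbf{G}^{(m)}} v^*_T(p,\cdot)$ is convex, so a further application of $\mathcal{S}_{\mathbf{G}^{(m)}}$ preserves convexity. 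Summing these two convex terms and maximizing over the finite set $\AAA$ then yields convexity of $v^{(m,n)}_{T-1}(p,\cdot)$ in $z$.

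For the inductive step, suppose $v^{(m,n)}_{t+1}(p,\cdot)$ is convex in $z$ for each $p\in\PPP$, where $t\le T-2$. As above, $\mathcal{S}_{\mathbf{G}^{(m)}} r_t(p,\cdot,a)$ is convex, and Assumption \ref{assSampleConvex} gives the convexity of $\mathcal{K}^{a,(n)}_t v^{(m,n)}_{t+1}(p,\cdot)$, which therefore remains convex after applying $\mathcal{S}_{\mathbf{G}^{(m)}}$. The sum-then-maximum structure of $\mathcal{T}^{(m,n)}_t$ preserves convexity, so $v^{(m,n)}_t(p,\cdot)$ is convex and the induction closes.

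The argument carries essentially no analytic difficulty, because the only genuinely substantive claim --- that the discretized transition operator returns a convex function when fed a convex function --- has been placed in Assumption \ref{assSampleConvex} rather than derived within the proof. The one point demanding care is that the terminal recursion in \eqref{modBell1} feeds $\mathcal{S}_{\mathbf{G}^{(m)}} v^*_T$ into $\mathcal{K}^{a,(n)}_{T-1}$, whereas the subsequent steps feed the modified value functions $v^{(m,n)}_{t+1}$; this mismatch is exactly why Assumption \ref{assSampleConvex} is stated as two separate clauses, and the induction must match each clause to the correct stage. I would therefore be careful to invoke the terminal clause at $t=T-1$ and the generic clause for $t=T-2,\dots,0$, so that every continuation term enters already known to be convex before $\mathcal{S}_{\mathbf{G}^{(m)}}$, the sum, and the maximum are applied.
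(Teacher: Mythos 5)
Your proposal is correct and follows essentially the same route as the paper's own proof: backward induction that combines Assumption \ref{assSampleConvex} for the discretized transition terms, Assumption \ref{assConvex} for the rewards, Assumption \ref{piecewiseS} for preservation of convexity under $\mathcal{S}_{\mathbf{G}^{(m)}}$, and closure of convexity under finite sums and pointwise maxima. Your explicit remark about matching the two clauses of Assumption \ref{assSampleConvex} to the terminal step versus the generic steps is a point the paper handles only implicitly, but it does not change the argument.
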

\begin{proof} By assumption,
  $\mathcal{K}^{a,(n)}_{T-1}{\mathcal S}_{\mathbf{G}^{(m)}}v_T^*(p,z)$
  is convex in $z$ for $p\in\PPP$, $a\in\AAA$, and $m,n\in\NN$. The
  reward functions are convex in $z$ (by Assumption \ref{assConvex})
  for $p\in\PPP$ and $a\in\AAA$. Therefore,
  $\mathcal{S}_{\mathbf{G}^{(m)}} {r_{t}(p, z, a)} +
  \mathcal{S}_{\mathbf{G}^{(m)}} {\mathcal K}^{a, (n)}_{t}v(p, z)$ is
  convex in $z$ by Assumption \ref{piecewiseS}. The sum and pointwise
  maximum of convex functions is convex. Therefore,
  $v_{T-1}^{(m,n)}(p, z)$ is convex in $z$ for $p\in\PPP$ and
  $m,n\in\NN$ due to application of ${\mathcal
    S}_{\mathbf{G}^{(m)}}$. Proceeding inductively for
  $t = T-2, \dots, 0$ gives the desired result.
\end{proof}

Please note that the grid $\GGG^{(m)}$ can be easily made time
dependent without affecting the convergence results in the next
section. But for notational simplicity, this dependence is
omitted. Also note that the modified Bellman operator \eqref{modBell}
is not necessarily monotone since the operator
$\mathcal{S}_{\GGG^{(m}}$ is not necessarily monotone. It turns out
that the convergence results presented in the next section does not
require this property. However, to obtain lower and upper bounding
functions in Section \ref{secLower} and Section \ref{secUpper}, this
paper will impose Assumption \ref{assMonotone} in Section
\ref{secLower} to induce monotonicity in the modified Bellman
operator.


\section{Convergence} \label{secConvergence}

This section proves convergence of the modified value functions. There
are two natural choices for the disturbance sampling and weights:
\begin{itemize}
\item Use Monte Carlo to sample the disturbances randomly and the
  realizations are given equal weight or;
\item Partition $\WWW$ and use some derived value (e.g. the
  conditional averages) on each of the components for the
  sampling. The sampling weights are determined by the probability
  measure of each component.
\end{itemize}
While the first choice is easier to use and more practical in high
dimensional settings, the second selection confers many desirable
properties which will be examined later on.  First introduce the
following useful concepts which will be used extensively.

\begin{lemma}\label{lemmaRockafellar}
  Let $(h^{(n)})_{n\in\NN}$ be a sequence of real-valued convex
  functions on $\ZZZ$ i.e. $h^{(n)}:\ZZZ\to\RR$ for $n\in\NN$. If the
  sequence converges pointwise to $h$ on a dense subset of $\ZZZ$,
  then the sequence $(h^{(n)})_{n\in\NN}$ converges uniformly to $h$
  on all compact subsets of $\ZZZ$.
\end{lemma}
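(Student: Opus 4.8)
The plan is to reduce everything to a single compact convex set and then exploit the fact that pointwise convergence of convex functions, together with a local uniform boundedness estimate, forces local uniform convergence. First I would fix an arbitrary compact set $C \subseteq \ZZZ$ and, since $\ZZZ$ is open, enclose $C$ in a slightly larger compact convex set $K$ with $C \subseteq \mathrm{int}(K) \subseteq K \subseteq \ZZZ$ (for instance a finite union of closed balls, then its convex hull, shrunk to stay inside $\ZZZ$). Because the dense subset $D$ meets every nonempty open subset of $\ZZZ$, I can select finitely many points of $D$ whose convex hull still contains $K$; this is the key device that lets me transfer pointwise convergence on $D$ into a uniform bound on $K$.

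The first substantive step is a uniform \emph{upper} bound. Each $h^{(n)}$ is convex, so on the convex hull of the chosen finite point set it is bounded above by the maximum of its values at those finitely many points of $D$. Since $h^{(n)}$ converges pointwise at each of these points, those finitely many values are bounded uniformly in $n$, giving $\sup_n \sup_{z \in K} h^{(n)}(z) \le M < \infty$. The second step is a uniform \emph{lower} bound, obtained from convexity by the standard trick: for $z$ in the interior and $z_0$ a fixed reference point, write $z_0$ as a convex combination involving $z$ and a point $z'$ on the boundary sphere of $K$, so that $h^{(n)}(z_0) \le \lambda h^{(n)}(z) + (1-\lambda) h^{(n)}(z')$; solving for $h^{(n)}(z)$ and using the upper bound $M$ on $h^{(n)}(z')$ together with convergence of $h^{(n)}(z_0)$ yields a uniform lower bound on a neighborhood of $C$. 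Together these give uniform boundedness of $(h^{(n)})$ on a neighborhood of $C$.

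With uniform boundedness in hand, the third step is to upgrade to equi-Lipschitz behavior: a family of convex functions uniformly bounded on $K$ is uniformly Lipschitz on any compact subset of $\mathrm{int}(K)$, with Lipschitz constant controlled by the oscillation bound $2M$ divided by the distance from $C$ to $\partial K$. This is the classical interior Lipschitz estimate for convex functions and I would invoke it as the analytic engine. Finally, equi-Lipschitz continuity plus pointwise convergence on the dense set $D$ gives uniform convergence on $C$ by a routine $\varepsilon/3$ argument: given $\varepsilon$, cover $C$ by finitely many balls of radius $\delta = \varepsilon/(6L)$ centered at points of $D$, bound $|h^{(n)}(z) - h(z)|$ by the Lipschitz terms at the nearest center plus the pointwise convergence term there.

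The main obstacle I anticipate is producing the uniform boundedness cleanly, specifically the lower bound, since convexity only immediately yields upper bounds on convex hulls; the lower bound requires the geometric extrapolation argument above and careful attention to keeping all auxiliary points inside $\ZZZ$, which is where openness of $\ZZZ$ and density of $D$ are both essential. Once uniform boundedness is established, the equi-Lipschitz estimate and the $\varepsilon/3$ conclusion are standard. I would note that this lemma is essentially Theorem~10.8 of Rockafellar's \emph{Convex Analysis}, so an alternative is simply to cite it; but the self-contained argument above is short enough to include.
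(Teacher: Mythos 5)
Your proposal is correct, but it takes a different route from the paper in a trivial-sounding yet real sense: the paper's entire proof is a citation to \cite[Theorem 10.8]{rockafellar}, whereas you reconstruct the standard proof of that theorem from scratch. Your three-step structure (uniform boundedness on a compact convex neighborhood, obtained by trapping it in the convex hull of finitely many dense-set points for the upper bound and by the convex extrapolation trick for the lower bound; then the interior equi-Lipschitz estimate for uniformly bounded convex families; then an $\varepsilon/3$ argument combining equi-Lipschitz continuity with pointwise convergence on a finite $\delta$-net drawn from the dense set) is exactly the classical argument, and the two delicate points you flag --- robustness of the containment $K \subseteq \mathrm{conv}(d_1,\dots,d_k)$ under perturbing vertices into the dense set, and keeping the coefficient $\lambda$ in the lower-bound extrapolation bounded away from $0$ uniformly over $z$ in the compact set --- are precisely the places where care is needed; both can be handled (e.g. $\lambda \geq \mathrm{dist}(z_0,\partial K)/\mathrm{diam}(K)$ for the second). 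One small point worth making explicit in a polished write-up: in the $\varepsilon/3$ step you also need $|h(d_j)-h(z)| \leq L|d_j - z|$, which follows because $h$ inherits the Lipschitz constant $L$ on the dense set and is (in the paper's usage) convex, hence continuous. What your approach buys is a self-contained paper; what the paper's approach buys is brevity and no risk of error in re-deriving a classical result. Either is acceptable, and you yourself note the citation alternative at the end.
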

\begin{proof}
  See \cite[Theorem 10.8]{rockafellar}.
\end{proof}

\begin{defn}
  A sequence of convex real-valued functions $(h^{(n)})_{n\in\NN}$ on
  $\ZZZ$ is called a CCC (convex compactly converging) sequence in $z$
  if $(h^{(n)})_{n\in\NN}$ converges uniformly on all compact subsets of
  $\ZZZ$.
\end{defn}

In the following two subsections, let $p\in\PPP$, $a\in\AAA$, and
$t=0,\dots,T-1$ be arbitrary chosen. The sequence
$(v_{t+1}^{(n)}(p,z))_{n\in\NN}$ will be used to demonstrate the
behaviour of the modified value functions under the modified
transition operator. Assume $(v_{t+1}^{(n)}(p',z))_{n\in\NN}$ forms a
CCC sequence in $z$ converging to value functions $v^*_{t+1}(p',z)$
for all $p'\in\PPP$. Note that by Assumption \ref{assContinuity},
$v_{t+1}^{(n)}(p', f_{t+1}(w,z))$ is continuous in $w$ since we have a
composition of continuous functions.
 
\subsection{Monte Carlo sampling} \label{secMC}

The below establishes uniform convergence on compact sets under Monte
Carlo sampling.

\begin{theorem} \label{transConvergeMC} Let
  $(W^{a,(n)}_{t+1}(k))_{k=1}^{n}$ be idependently and identically
  distributed copies of $W^a_{t+1}$ and
  $\rho_{t+1}^{a,(n)}(k) = \frac{1}{n}$ for $k=1,\dots,n$. Assume
  these random variables reside on the same probability space as
  $W^a_{t+1}$. If $\WWW$ is compact, it holds that
  \begin{equation*} 
    \lim_{n\to\infty}
    \mathcal{K}_t^{a,(n)}v_{t+1}^{(n)}(p,z) =
    \mathcal{K}_t^{a}v_{t+1}^*(p,z), \quad z\in\ZZZ.
  \end{equation*}
  If $\mathcal{K}_t^{a,(n)}v_{t+1}^{(n)}(p,z)$ is also convex in $z$
  for $n\in\NN$, then
  $(\mathcal{K}_t^{a,(n)}v_{t+1}^{(n)}(p,z))_{n\in\NN}$ forms a CCC
  sequence in $z$.
\end{theorem}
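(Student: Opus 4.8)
The plan is to establish the pointwise limit first and then upgrade it to uniform convergence on compacts via Lemma \ref{lemmaRockafellar}. Fix $z\in\ZZZ$. Subtracting \eqref{transoperator} from \eqref{modKernel} (with $\rho^{a,(n)}_{t+1}(k)=1/n$), the difference $\mathcal{K}_t^{a,(n)}v_{t+1}^{(n)}(p,z)-\mathcal{K}_t^{a}v_{t+1}^*(p,z)$ is a finite $\alpha_{t+1}^{a,p,p'}$-weighted sum over $p'\in\PPP$ of terms of the form
\[
\frac1n\sum_{k=1}^{n} v_{t+1}^{(n)}\big(p',f_{t+1}(W^{a,(n)}_{t+1}(k),z)\big)-\EE\big[v_{t+1}^*(p',f_{t+1}(W^a_{t+1},z))\big].
\]
Since $\PPP$ is finite, it suffices to show each such term vanishes. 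I would add and subtract $\frac1n\sum_{k=1}^n v_{t+1}^*(p',f_{t+1}(W^{a,(n)}_{t+1}(k),z))$, splitting the term into an \emph{approximation error} $(I)$, in which $v_{t+1}^{(n)}$ is replaced by its limit $v_{t+1}^*$, and a \emph{sampling error} $(II)$, which is an empirical average of $v_{t+1}^*(p',f_{t+1}(\cdot,z))$ minus its expectation.

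For the sampling error $(II)$, the map $w\mapsto v_{t+1}^*(p',f_{t+1}(w,z))$ is continuous on $\WWW$, being the composition of the continuous $f_{t+1}(\cdot,z)$ (Assumption \ref{assContinuity}) with the convex, hence continuous, function $v_{t+1}^*(p',\cdot)$. As $\WWW$ is compact this map is bounded and integrable, so the strong law of large numbers for the i.i.d.\ samples gives $(II)\to 0$ almost surely. For the approximation error $(I)$, compactness does the essential work: every evaluation point $f_{t+1}(W^{a,(n)}_{t+1}(k),z)$ lies in $C_z:=f_{t+1}(\WWW,z)$, which is a compact subset of $\ZZZ$ because $\WWW$ is compact and $f_{t+1}(\cdot,z)$ is continuous. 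Hence
\[
|(I)|\le \sup_{\zeta\in C_z}\big|v_{t+1}^{(n)}(p',\zeta)-v_{t+1}^*(p',\zeta)\big|,
\]
and since $(v_{t+1}^{(n)}(p',\cdot))_{n\in\NN}$ is a CCC sequence converging to $v_{t+1}^*(p',\cdot)$, this supremum tends to $0$. Combining the two estimates, each term vanishes almost surely, and the claimed pointwise limit follows.

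For the CCC conclusion I would invoke Lemma \ref{lemmaRockafellar}. The limit $\mathcal{K}_t^{a}v_{t+1}^*(p,\cdot)$ is convex by Assumption \ref{assConvex}, each $\mathcal{K}_t^{a,(n)}v_{t+1}^{(n)}(p,\cdot)$ is convex by hypothesis, and $\ZZZ\subseteq\RR^d$ admits a countable dense subset $D$. Intersecting the countably many almost-sure convergence events over $z\in D$ shows that the sequence converges pointwise on the dense set $D$ almost surely, so Lemma \ref{lemmaRockafellar} upgrades this to uniform convergence on every compact subset of $\ZZZ$, i.e.\ a CCC sequence.

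The main obstacle is the $n$-dependence of the integrand $v_{t+1}^{(n)}$, which blocks a direct appeal to the law of large numbers; the crux of the argument is therefore the add-and-subtract split together with the observation that, for fixed $z$, all random arguments are confined to the single compact set $C_z$, so that the assumed CCC uniform convergence of $v_{t+1}^{(n)}$ to $v_{t+1}^*$ is exactly what controls $(I)$.
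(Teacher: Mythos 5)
Your proposal is correct and follows essentially the same route as the paper's proof: split the empirical average into a sampling error handled by the strong law of large numbers and an approximation error controlled by the sup of $|v_{t+1}^{(n)}-v_{t+1}^*|$ over the compact image $f_{t+1}(\WWW,z)$, then intersect countably many almost-sure events over a dense set and apply Lemma \ref{lemmaRockafellar}. The only (cosmetic) difference is that you bound the approximation error directly by $\sup_{\zeta\in C_z}|v_{t+1}^{(n)}(p',\zeta)-v_{t+1}^*(p',\zeta)|$, which is the paper's $M_n$, rather than routing this bound through the paper's Ces\`aro-mean step.
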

\begin{proof} From the strong law of large numbers,
  $$
  \lim_{n\to\infty} \frac{1}{n}\sum_{k=1}^{n}
  v_{t+1}^*(p', f_{t+1}(W^{a,(n)}_{t+1}(k), z)) = \EE[v_{t+1}^*(p',
  f_{t+1}(W^{a}_{t+1}, z))]
  $$
  holds with probability one. The summands can be expressed
  as
  $$
  v_{t+1}^{(n)}(p', f_{t+1}(W^{a,(n)}_{t+1}(k), z)) +
  v_{t+1}^*(p', f_{t+1}(W^{a,(n)}_{t+1}(k), z)) -
  v_{t+1}^{(n)}(p', f_{t+1}(W^{a,(n)}_{t+1}(k), z)).
  $$
  Define
  $M_n:=\sup_{w\in\WWW} |v_{t+1}^*(p', f_{t+1}(w, z)) -
  v_{t+1}^{(n)}(p', f_{t+1}(w, z))|$. The continuity of
  $f_{t+1}(w, z)$ in $w$, the uniform convergence on compact sets of
  $v_{t+1}^{(n)}$ to $v_{t+1}^*$, and the compactness of $\WWW$ gives
  $ \lim_{n\to\infty} M_n = 0$. From Cesaro means,
  $$ 
  \lim_{n\to\infty}\frac{1}{n} \sum_{k=1}^n |v_{t+1}^*(p',
  f_{t+1}(W^{a,(n)}_{t+1}(k), z)) - v_{t+1}^{(n)}(p',
  f_{t+1}(W^{a,(n)}_{t+1}(k), z))| \leq \lim_{n\to\infty}\frac{1}{n}
  \sum_{k=1}^n M_k = 0
  $$
  with probability one and so
  $$
  \lim_{n\to\infty}
  \frac{1}{n}\sum_{k=1}^{n} v_{t+1}^{(n)}(p', f_{t+1}(W^{a,(n)}_{t+1}(k), z))
  = \EE[v_{t+1}^*(p', f_{t+1}(W^{a}_{t+1}, z))]
  $$
  with probability one. Therefore, 
  $$
  \lim_{n\to\infty} \sum_{p'\in\PPP}
  \alpha_{t+1}^{a,p,p'}\frac{1}{n}\sum_{k=1}^{n} v_{t+1}^{(n)}(p',
  f_{t+1}(W^{a,(n)}_{t+1}(k), z)) = \sum_{p'\in\PPP}
  \alpha_{t+1}^{a,p,p'}\EE[v_{t+1}^*(p', f_{t+1}(W^{a}_{t+1}, z))]
  $$
  almost surely and so the first part of the statement then follows.
  Now observe that the almost sure convergence in the first part of
  the statement holds for any choice of $z\in \ZZZ$. There are a
  countable number of $z\in \cup_{m\in\NN} \GGG^{(m)}$. A countable
  intersection of almost sure events is also almost sure. Therefore,
  $\mathcal{K}_t^{a,(n)}v_{t+1}^{(n)}(p,\cdot)$ converges to
  $\mathcal{K}_t^{a}v_{t+1}^*(p,\cdot)$ pointwise on a dense subset
  $\cup_{m\in\NN} \GGG^{(m)}$ of $\ZZZ$ with probability one. The
  second part of the statement then results from Lemma
  \ref{lemmaRockafellar}.
\end{proof}

The above assumes that $\WWW$ is compact.  While this compactness
assumption may seem problematic for unbounded $\WWW$ cases, one can
often find a compact subset $\overline\WWW\subset\WWW$ so obscenely
large that it contains the vast majority of the probability
mass. Therefore, from at least a numerical work perspective, the
drawback from this compactness is not that practically significant
especially considering computers typically have a limit on the size of
the numbers they can generate and because of machine epsilon. For
example, if $\WWW = \RR_+$, one can set $\overline \WWW$ where
$\max \overline\WWW$ is orders of magnitudes greater than this size
limit and $\min \overline \WWW$ is drastically smaller than the
machine epsilon. With this, one can then use Monte Carlo sampling in
practice as normal without restriction. The above convergence when
$\WWW$ is not compact will be addresssed in future research.

\subsection{Disturbance space partition}

This subsection proves the same convergence under partitioning of the
disturbance space $\WWW$. Introduce partition
$ \Pi^{(n)}=\{\Pi^{(n)}(k)\subset \WWW \, :\, k=1, \dots, n \}$ and 
define the diameter of the partition by
$$\delta^{(n)} := \max_{k=1,\dots,n} \sup\{\|w' - w''\| : w',w'' \in
\Pi^{(n)}(k)\}$$ if it exists. The case where $\WWW$ is compact is
considered first.

\begin{theorem} \label{transConvergeBounded} Suppose $\WWW$ is compact
  and let $\lim_{n\to\infty} \delta^{(n)} = 0$.  Choose sampling
  $(W^{a,(n)}_{t+1}(k))_{k=1}^{n}$ where
  $W^{a,(n)}_{t+1}(k) \in \Pi^{(n)} (k)$ and
  $\rho^{a,(n)}_{t+1}(k) = \PP(W^{a}_{t+1} \in \Pi^{(n)}(k))$ for
  $k=1,\dots,n$. It holds that
\begin{equation*}
  \lim_{n\to\infty} \mathcal{K}_t^{a,(n)}v_{t+1}^{(n)}(p,z) = \mathcal{K}_t^{a}v_{t+1}^*(p,z), \quad z\in\ZZZ. 
\end{equation*}
If $\mathcal{K}_t^{a,(n)}v_{t+1}^{(n)}(p,z)$ is also convex in $z$ for
$n\in\NN$, then $(\mathcal{K}_t^{a,(n)}v_{t+1}^{(n)}(p,z))_{n\in\NN}$
forms a CCC sequence in $z$.
\end{theorem}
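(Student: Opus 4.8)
The plan is to mirror the proof of Theorem \ref{transConvergeMC}, replacing the strong law of large numbers by a Riemann-sum (quadrature) estimate that exploits the vanishing partition diameter. Since the weights $\alpha_{t+1}^{a,p,p'}$ are fixed and $\PPP$ is finite, it suffices to prove, for each fixed $p'\in\PPP$ and each fixed $z\in\ZZZ$, that
\[
\sum_{k=1}^n \rho_{t+1}^{a,(n)}(k)\, v_{t+1}^{(n)}\big(p', f_{t+1}(W^{a,(n)}_{t+1}(k), z)\big) \longrightarrow \EE\big[v_{t+1}^*(p', f_{t+1}(W^a_{t+1}, z))\big].
\]
Write $g^{(n)}(w) := v_{t+1}^{(n)}(p', f_{t+1}(w,z))$ and $g^*(w) := v_{t+1}^*(p', f_{t+1}(w,z))$, and let $\mu$ denote the law of $W^a_{t+1}$ on $\WWW$. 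Both $g^{(n)}$ and $g^*$ are continuous in $w$ (Assumption \ref{assContinuity} together with the continuity of convex functions), hence bounded and uniformly continuous on the compact set $\WWW$, so no integrability obstruction arises.

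First I would dispose of the discrepancy between $v^{(n)}_{t+1}$ and $v^*_{t+1}$ exactly as in Theorem \ref{transConvergeMC}. Setting $M_n := \sup_{w\in\WWW} |g^*(w) - g^{(n)}(w)|$, the uniform convergence of $v^{(n)}_{t+1}$ to $v^*_{t+1}$ on compact sets, the compactness of $\WWW$, and the continuity of $f_{t+1}$ in $w$ give $M_n \to 0$. Because $\Pi^{(n)}$ partitions $\WWW$, the weights satisfy $\sum_{k=1}^n \rho_{t+1}^{a,(n)}(k) = \mu(\WWW) = 1$, so
\[
\Big| \sum_{k=1}^n \rho_{t+1}^{a,(n)}(k)\big(g^{(n)} - g^*\big)\big(W^{a,(n)}_{t+1}(k)\big) \Big| \leq M_n \longrightarrow 0,
\]
and it remains to treat the sum with $g^*$ in place of $g^{(n)}$.

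The core step is to show $\sum_{k=1}^n \rho_{t+1}^{a,(n)}(k)\, g^*(W^{a,(n)}_{t+1}(k)) \to \EE[g^*(W^a_{t+1})]$. Writing $\EE[g^*(W^a_{t+1})] = \sum_{k=1}^n \int_{\Pi^{(n)}(k)} g^*(w)\, \mu(dw)$ and using $\rho_{t+1}^{a,(n)}(k) = \mu(\Pi^{(n)}(k))$, the difference is bounded by
\[
\sum_{k=1}^n \int_{\Pi^{(n)}(k)} \big| g^*(W^{a,(n)}_{t+1}(k)) - g^*(w) \big|\, \mu(dw).
\]
Since both $W^{a,(n)}_{t+1}(k)$ and $w$ lie in $\Pi^{(n)}(k)$, their distance is at most $\delta^{(n)}$; given $\varepsilon > 0$, uniform continuity of $g^*$ supplies $\delta > 0$ with $|g^*(w) - g^*(w')| < \varepsilon$ whenever $\|w - w'\| < \delta$, and once $n$ is large enough that $\delta^{(n)} < \delta$ the displayed sum is at most $\varepsilon \sum_{k=1}^n \rho_{t+1}^{a,(n)}(k) = \varepsilon$. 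Combining the two estimates yields the pointwise convergence claimed in the first part, for every $z\in\ZZZ$.

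The main obstacle is precisely this core quadrature estimate: it is where compactness of $\WWW$ (forcing uniform continuity) and $\delta^{(n)} \to 0$ are indispensable, and it is what replaces the strong law of large numbers of the Monte Carlo setting. The remainder is immediate: since the convergence now holds at every point of $\ZZZ$, in particular on the dense set $\cup_{m\in\NN}\GGG^{(m)}$, the CCC conclusion follows from Lemma \ref{lemmaRockafellar} under the stated convexity hypothesis — and, the partition sampling being deterministic, no almost-sure-intersection argument as in Theorem \ref{transConvergeMC} is needed here.
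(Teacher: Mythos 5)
Your proposal is correct; every step checks out (the reduction over finite $\PPP$, the $M_n$ estimate using compactness of the image $f_{t+1}(\WWW,z)$, the quadrature bound, and the appeal to Lemma \ref{lemmaRockafellar}). It does, however, run the key step differently from the paper. The paper constructs a coupled random variable $W^{a,(n)}_{t+1} = \sum_{k=1}^n W^{a,(n)}_{t+1}(k)\,\11\bigl(W^a_{t+1}\in\Pi^{(n)}(k)\bigr)$, so that the modified operator becomes the genuine expectation $\EE[v^{(n)}_{t+1}(p',f_{t+1}(W^{a,(n)}_{t+1},z))]$, then bounds $\EE[\|W^{a,(n)}_{t+1}-W^a_{t+1}\|]\le\delta^{(n)}\to 0$ to obtain convergence in distribution, and finally combines this weak convergence with compactness of $\WWW$, continuity in $w$, and the uniform convergence of $v^{(n)}_{t+1}$ to conclude convergence of the expectations. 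You instead write the discrepancy partition-by-partition as $\sum_{k}\int_{\Pi^{(n)}(k)}|g^*(W^{a,(n)}_{t+1}(k))-g^*(w)|\,\mu(dw)$ and kill it by uniform continuity of $g^*$ on the compact $\WWW$ once $\delta^{(n)}$ drops below the modulus-of-continuity threshold; the preliminary step disposing of $g^{(n)}$ versus $g^*$ via $M_n$ is the same in both. Your route is more elementary: it avoids convergence in distribution and any continuous-mapping argument, and it is quantitative, since the total error is bounded by $M_n$ plus the modulus of continuity of $g^*$ at scale $\delta^{(n)}$ (so it would yield explicit rates under a Lipschitz assumption). What the paper's coupling buys is uniformity of technique across the section: the same coupled-variable device, in conditional-expectation form, drives Theorem \ref{transConvergeLA}, Lemma \ref{lemmaK}, and the upper-bound results via Lemma \ref{transConvergeUpper}, so its proof here is one instance of a reusable pattern. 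Your closing observation is also right: because the partition sampling is deterministic, the pointwise convergence holds at every $z\in\ZZZ$ outright, and no countable intersection of almost-sure events (as needed in Theorem \ref{transConvergeMC}) is required.
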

\begin{proof} Denote $n$-point random variable
  $$
  W_{t+1}^{a,(n)} = \sum_{k=1}^{n} W_{t+1}^{a,(n)}(k) \11\left(W_{t+1}^{a}\in\Pi^{(n)}(k)\right)
  $$
  where $\11(\mathbf{B})$ denotes the indicator function of the set
  $\mathbf{B}$. Now
  $$
  \lim_{n\to\infty} \EE[ || W_{t+1}^{a,(n)} - W_{t+1}^{a} ||] \leq
  \lim_{n\to\infty} \delta^{(n)} = 0
  $$
  and so $W_{t+1}^{a,(n)}$ converges to $W_{t+1}^{a}$ in distribution
  as $n\to\infty$.  Using this convergence, the fact that $\WWW$ is
  compact, the fact that $v_{t+1}^{(n)}(p', f_{t+1}(w,z))$ and
  $v_{t+1}^{*}(p', f_{t+1}(w,z))$ are continuous in $w$, and the fact
  that $v_{t+1}^{(n)}$ converges to $v^*_{t+1}$ uniformly on compact
  sets, it can be seen that
  $$
  \lim_{n\to\infty}
  \EE[v_{t+1}^{(n)}(p', f_{t+1}(W^{a,(n)}_{t+1}, z))] =
  \EE[v^*_{t+1}(p', f_{t+1}(W^{a}_{t+1}, z))]
  $$
  for $p'\in\PPP$ and $z\in\ZZZ$. This first part of the statement
  then follows easily. The second part of the theorem follows from
  Lemma \ref{lemmaRockafellar}.
\end{proof}

The next theorem examines the case when $\WWW$ is not necessarily
compact. In addition, conditional averages are used for the
disturbance sampling and this is perhaps a more sensible choice given
that it minimizes the mean square error from the discretization of
$W^{a}_{t+1}$. In the following,
$\EE[W^{a}_{t+1} \mid W^{a}_{t+1} \in \Pi^{(n)}(k)]$ refers to the
expectation of $W^{a}_{t+1}$ conditioned on the event
$\{ W^{a}_{t+1} \in \Pi^{(n)}(k)\}$. This paper sometimes refers to
this as the local average.

\begin{theorem} \label{transConvergeLA} Suppose generated
  sigma-algebras
  $ \sigma^{(n)}_{a,{t+1}}=\sigma(\{W^a_{t+1} \in \Pi^{(n)}(k)\},
  \enspace k=1, \dots, n)$ satisfy
  $\sigma(W^a_{t+1}) = \sigma(\cup_{n\in\NN}\sigma^{(n)}_{a,{t+1}})$.
  Select sampling $(W^{a,(n)}_{t+1}(k))_{k=1}^{n}$ such that
  \[ W^{a,(n)}_{t+1}(k) = \EE[W^{a}_{t+1} \mid W^{a}_{t+1} \in
    \Pi^{(n)}(k)]\] with
  $\rho^{a,(n)}_{t+1}(k) = \PP(W^{a}_{t+1} \in \Pi^{(n)}(k))$ for
  $k=1,\dots,n$. If
  $(v_{t+1}^{(n)}(p', f_{t+1}(W^{a,(n)}_{t+1}, z)))_{n\in\NN}$ is
  uniformly integrable for $p'\in\PPP$ and $z\in\ZZZ$, then:
\begin{equation*}
  \lim_{n\to\infty} \mathcal{K}_t^{a,(n)}v_{t+1}^{(n)}(p,z) = \mathcal{K}_t^{a}v_{t+1}^*(p,z), \quad z\in\ZZZ.
\end{equation*}
If $\mathcal{K}_t^{a,(n)}v_{t+1}^{(n)}(p,z)$ is also convex in $z$ for
all $n\in\NN$, then
$(\mathcal{K}_t^{a,(n)}v_{t+1}^{(n)}(p,z))_{n\in\NN}$ forms a CCC
sequence in $z$.
\end{theorem}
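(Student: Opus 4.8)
The plan is to rewrite the weighted sum defining $\mathcal{K}^{a,(n)}_{t}v^{(n)}_{t+1}$ as a genuine expectation and then combine martingale convergence with the uniform integrability hypothesis. First I would note that, since the sample points are the local averages $W^{a,(n)}_{t+1}(k) = \EE[W^{a}_{t+1} \mid W^{a}_{t+1}\in\Pi^{(n)}(k)]$ with weights $\rho^{a,(n)}_{t+1}(k) = \PP(W^{a}_{t+1}\in\Pi^{(n)}(k))$, the $n$-point random variable
$$
W^{a,(n)}_{t+1} = \sum_{k=1}^{n} W^{a,(n)}_{t+1}(k)\,\11\!\left(W^{a}_{t+1}\in\Pi^{(n)}(k)\right)
$$
is exactly the conditional expectation $\EE[W^{a}_{t+1}\mid\sigma^{(n)}_{a,t+1}]$, because conditioning on the sigma-algebra generated by a finite partition returns the cell-wise averages. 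With this identification the inner sum collapses to an expectation,
$$
\sum_{k=1}^{n}\rho^{a,(n)}_{t+1}(k)\,v^{(n)}_{t+1}(p', f_{t+1}(W^{a,(n)}_{t+1}(k),z)) = \EE\!\left[v^{(n)}_{t+1}(p', f_{t+1}(W^{a,(n)}_{t+1},z))\right],
$$
so the whole problem reduces to passing to the limit in this expectation and then summing over $p'$ against $\alpha_{t+1}^{a,p,p'}$.

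Taking the partitions $\Pi^{(n)}$ to be successively refining, the generating sigma-algebras $(\sigma^{(n)}_{a,t+1})_{n\in\NN}$ increase, and the hypothesis $\sigma(W^{a}_{t+1}) = \sigma(\cup_{n\in\NN}\sigma^{(n)}_{a,t+1})$ identifies their limit. L\'evy's upward martingale convergence theorem applied to the integrable $W^{a}_{t+1}$ then yields
$$
W^{a,(n)}_{t+1} = \EE[W^{a}_{t+1}\mid\sigma^{(n)}_{a,t+1}] \longrightarrow \EE[W^{a}_{t+1}\mid\sigma(W^{a}_{t+1})] = W^{a}_{t+1}
$$
almost surely. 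By continuity of $f_{t+1}(\cdot,z)$ in $w$ (Assumption \ref{assContinuity}), it follows that $f_{t+1}(W^{a,(n)}_{t+1},z)\to f_{t+1}(W^{a}_{t+1},z)$ almost surely for each fixed $z\in\ZZZ$.

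The hard part will be passing the limit through $v^{(n)}_{t+1}$, since both the approximating function and its argument vary with $n$ while $\WWW$ need no longer be compact. I would argue pointwise in $\omega$ via the splitting
$$
\left|v^{(n)}_{t+1}(p', w_n) - v^{*}_{t+1}(p', w_\infty)\right| \leq \left|v^{(n)}_{t+1}(p', w_n) - v^{*}_{t+1}(p', w_n)\right| + \left|v^{*}_{t+1}(p', w_n) - v^{*}_{t+1}(p', w_\infty)\right|,
$$
writing $w_n := f_{t+1}(W^{a,(n)}_{t+1},z)$ and $w_\infty := f_{t+1}(W^{a}_{t+1},z)$. For almost every $\omega$ the sequence $w_n(\omega)$ converges to $w_\infty(\omega)\in\ZZZ$, so $\{w_n(\omega):n\in\NN\}\cup\{w_\infty(\omega)\}$ is a compact subset of $\ZZZ$; the first term then vanishes because $v^{(n)}_{t+1}$ converges to $v^{*}_{t+1}$ uniformly on that compact set, and the second vanishes by continuity of the convex limit $v^{*}_{t+1}(p',\cdot)$. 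Hence $v^{(n)}_{t+1}(p', f_{t+1}(W^{a,(n)}_{t+1},z))\to v^{*}_{t+1}(p', f_{t+1}(W^{a}_{t+1},z))$ almost surely, and combining this with the assumed uniform integrability of $(v^{(n)}_{t+1}(p', f_{t+1}(W^{a,(n)}_{t+1},z)))_{n\in\NN}$ upgrades it to $L^1$ convergence, so that $\EE[v^{(n)}_{t+1}(p', f_{t+1}(W^{a,(n)}_{t+1},z))]\to\EE[v^{*}_{t+1}(p', f_{t+1}(W^{a}_{t+1},z))]$.

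Summing over $p'\in\PPP$ against the weights $\alpha_{t+1}^{a,p,p'}$ gives $\mathcal{K}^{a,(n)}_{t}v^{(n)}_{t+1}(p,z)\to\mathcal{K}^{a}_{t}v^{*}_{t+1}(p,z)$ for each $z\in\ZZZ$, which is the first assertion. For the second, this limit is deterministic and holds at every $z\in\ZZZ$, hence in particular pointwise on the dense set $\cup_{m\in\NN}\GGG^{(m)}$; if each $\mathcal{K}^{a,(n)}_{t}v^{(n)}_{t+1}(p,\cdot)$ is in addition convex, Lemma \ref{lemmaRockafellar} promotes this to uniform convergence on compact subsets of $\ZZZ$, establishing the CCC property.
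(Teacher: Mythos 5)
Your proposal is correct and follows essentially the same route as the paper's own proof: it identifies the sampling random variable as $\EE[W^{a}_{t+1}\mid\sigma^{(n)}_{a,t+1}]$, invokes L\'evy's upward theorem for almost sure convergence, passes the limit through $v^{(n)}_{t+1}$ by noting that a convergent sequence together with its limit is a compact subset of $\ZZZ$ on which uniform convergence holds, and then uses uniform integrability plus almost sure convergence (i.e.\ the Vitali convergence theorem, which the paper cites by name) followed by Lemma \ref{lemmaRockafellar} for the CCC claim. The only difference is cosmetic: you make explicit that the partitions must refine so the sigma-algebras $\sigma^{(n)}_{a,t+1}$ are nested, a hypothesis the paper leaves implicit but which L\'evy's upward theorem indeed requires, and your compactness argument is stated slightly more carefully than the paper's appeal to ``uniform convergence on bounded sets.''
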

\begin{proof} Denote random variable
  $ W_{t+1}^{a,(n)} = \EE[W_{t+1}^{a} \mid \sigma^{(n)}_{a,{t+1}}]$
  which takes values in the set of local averages
  $\left\{ \EE[W_{t+1}^{a} \mid W_{t+1}^{a} \in \Pi^{(n)}(k)] : k = 1,
    \dots, n \right\}$. On the set of paths where the almost sure
  convergence $W_{t+1}^{a,(n)} \to W_{t+1}^{a}$ holds by Levy's upward
  theorem \cite[Section 14.2]{williams}, the set
  $\{W_{t+1}^{a,(n)}: n\in\NN\}$ is bounded on each sample path since
  a convergent sequence is bounded and so
  \[\lim_{n\to\infty} v_{t+1}^{(n)}(p', f_{t+1}(W^{a,(n)}_{t+1}, z)) = v^*_{t+1}(p', f_{t+1}(W^a_{t+1}, z))\]
  with probability one since there is uniform convergence on bounded
  sets. Using the Vitali convergence theorem \cite[Section
  13.7]{williams},
  \[\lim_{n\to\infty} \EE[v_{t+1}^{(n)}(p',f_{t+1}(W^{a,(n)}_{t+1}, z))] = \EE[v^*_{t+1}(p',f_{t+1}(W^{a}_{t+1}, z))]\]
  for $p'\in\PPP$.  This proves the first part of the statement. The
  second part of the statement stems from Lemma
  \ref{lemmaRockafellar}.
\end{proof}

The above assumes that
$(v_{t+1}^{(n)}(p', f_{t+1}(W^{a,(n)}_{t+1}, z)))_{n\in\NN}$ is
uniformly integrable. This is satisfied when $\WWW$ is compact. For
$\WWW$ not compact, Theorem \ref{lipschitzUI2} may be useful.

\begin{lemma} \label{lemmaUIcond} Let random variable $Y$ be
  integrable on $(\Omega, \mathcal{F},\PP)$. The class of random
  variables
  $ (\EE[Y \mid \mathcal{G}] : \mathcal{G} \text{ is a
    sub-sigma-algebra of } \mathcal{F}) $ is uniformaly integrable.
\end{lemma}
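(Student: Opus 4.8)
The plan is to verify the defining criterion for uniform integrability directly (as in \cite[Section 13.3]{williams}): I must show that
\[
\sup_{\mathcal{G}} \EE\bigl[\, |\EE[Y\mid\mathcal{G}]| \, \11(|\EE[Y\mid\mathcal{G}]| > K)\,\bigr] \longrightarrow 0 \quad \text{as } K \to \infty,
\]
where the supremum ranges over all sub-sigma-algebras $\mathcal{G}$ of $\mathcal{F}$. First I would fix such a $\mathcal{G}$, write $X := \EE[Y \mid \mathcal{G}]$, and apply the conditional Jensen inequality to the convex map $x \mapsto |x|$ to obtain $|X| \leq \EE[|Y| \mid \mathcal{G}]$ almost surely. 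This is the reduction that lets me replace the varying conditional expectation by the fixed integrable variable $|Y|$.

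Next I would exploit that the tail event $\{|X| > K\}$ is $\mathcal{G}$-measurable, since $X$ is $\mathcal{G}$-measurable. Combining the Jensen bound above with the pull-through property of conditional expectation on $\mathcal{G}$-measurable events yields
\[
\EE\bigl[\,|X| \, \11(|X| > K)\,\bigr] \leq \EE\bigl[\, \EE[|Y|\mid\mathcal{G}] \, \11(|X| > K)\,\bigr] = \EE\bigl[\,|Y| \, \11(|X| > K)\,\bigr].
\]
The essential feature here is that the right-hand side pairs the fixed integrable variable $|Y|$ with the indicator of an event whose probability I can bound uniformly over $\mathcal{G}$.

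To control that probability, I would combine Markov's inequality with the same Jensen bound: $\PP(|X| > K) \leq \EE[|X|]/K \leq \EE[|Y|]/K$, which is uniform in $\mathcal{G}$. Finally I would invoke the absolute continuity of the integral for the single integrable variable $|Y|$, namely that for every $\epsilon > 0$ there is $\delta > 0$ with $\PP(A) < \delta \Rightarrow \EE[|Y|\,\11_A] < \epsilon$. Given $\epsilon$, choosing $K$ so large that $\EE[|Y|]/K < \delta$ forces $\PP(|X| > K) < \delta$ simultaneously for every $\mathcal{G}$, whence $\EE[|X|\,\11(|X| > K)] < \epsilon$ for all $\mathcal{G}$ at once; taking the supremum and letting $K \to \infty$ closes the argument.

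As for obstacles, the argument is routine once the reduction to $|Y|$ is in place, so there is no deep difficulty. The one point demanding care is the \emph{uniformity} in $\mathcal{G}$: it is crucial that both the Markov bound and the absolute-continuity modulus $\delta$ depend only on $Y$ and not on the particular $\mathcal{G}$, which is exactly what the Jensen reduction secures. The absolute-continuity property of the integral is itself a standard consequence of integrability (e.g. dominated convergence applied to $|Y|\,\11(|Y| > M)$ as $M \to \infty$), and I would cite it rather than reprove it.
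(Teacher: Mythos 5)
Your proposal is correct and matches the paper's approach: the paper's entire proof is a citation to \cite[Section 13.4]{williams}, and the argument given there is exactly the one you reconstruct --- conditional Jensen to dominate $|\EE[Y\mid\mathcal{G}]|$ by $\EE[|Y|\mid\mathcal{G}]$, the $\mathcal{G}$-measurability of the tail event to pull the estimate back to $|Y|$, Markov's inequality for a tail-probability bound uniform in $\mathcal{G}$, and absolute continuity of the integral of $|Y|$ to close the argument. You have, in effect, supplied the proof the paper outsources to its reference.
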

\begin{proof}
  See \cite[Section 13.4]{williams}.
\end{proof}

In the following, the function $f_{t+1}(w,z)$ is said to be convex in
$w$ component-wise if each component of $f_{t+1}(w,z)$ is convex in
$w$.

\begin{theorem} \label{lipschitzUI2} Let
  $W_{t+1}^{a,(n)} = \EE[W_{t+1}^{a} \mid \sigma^{(n)}_{a,{t+1}}]$ and
  $v_{t+1}^{(n)}(p',z)$ be Lipschitz continuous in $z$ with Lipschitz
  constant $c_n$. If $\sup_{n\in\NN} c_n < \infty$ and for $z\in\ZZZ$
  either:
  \begin{itemize}
  \item $f_{t+1}(w,z)$ is Lipschitz continuous in $w$, or
  \item $\|f_{t+1}(w,z)\|$ is convex in $w$, or
  \item $f_{t+1}(w,z)$ is convex in $w$ component-wise,
  \end{itemize}
  holds, then
  $(v_{t+1}^{(n)}(p', f_{t+1}(W^{a,(n)}_{t+1}, z)))_{n\in\NN}$ is
  uniformly integrable for $z\in\ZZZ$.
\end{theorem}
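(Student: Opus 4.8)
The plan is to reduce the claim to a single uniform integrability statement about the disturbances and then dispatch the three hypotheses separately. Throughout I will lean on the elementary stability properties of uniform integrability: a family dominated in absolute value by a uniformly integrable family is itself uniformly integrable, and finite sums as well as affine images (with coefficients bounded in $n$) of uniformly integrable families remain uniformly integrable. First I would record that $(W^{a,(n)}_{t+1})_{n\in\NN}$ is uniformly integrable: since $W^a_{t+1}$ is integrable, conditional Jensen gives $\|W^{a,(n)}_{t+1}\| = \|\EE[W^a_{t+1}\mid\sigma^{(n)}_{a,t+1}]\| \le \EE[\|W^a_{t+1}\|\mid\sigma^{(n)}_{a,t+1}]$, and the right-hand family is uniformly integrable by Lemma \ref{lemmaUIcond}; hence so is $(\|W^{a,(n)}_{t+1}\|)_{n\in\NN}$.

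Next I would peel off the outer function $v^{(n)}_{t+1}$. Fixing a reference point $z_0\in\ZZZ$ and using Lipschitz continuity in $z$ with constant $c_n$, I obtain $|v^{(n)}_{t+1}(p', f_{t+1}(W^{a,(n)}_{t+1}, z))| \le |v^{(n)}_{t+1}(p', z_0)| + c_n \|f_{t+1}(W^{a,(n)}_{t+1}, z) - z_0\|$. Because the sequence $(v^{(n)}_{t+1}(p', z_0))_{n\in\NN}$ converges (it is a CCC sequence) it is bounded, and $\sup_n c_n<\infty$ by hypothesis, so the right-hand side is bounded by an affine function of $\|f_{t+1}(W^{a,(n)}_{t+1}, z)\|$ with coefficients bounded in $n$. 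Thus it suffices to prove that $(\|f_{t+1}(W^{a,(n)}_{t+1}, z)\|)_{n\in\NN}$ is uniformly integrable for fixed $z$, and this is where the three cases enter.

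For the first case, Lipschitz continuity of $f_{t+1}(\cdot, z)$ in $w$ gives $\|f_{t+1}(W^{a,(n)}_{t+1}, z)\| \le \|f_{t+1}(w_0, z)\| + L\|W^{a,(n)}_{t+1} - w_0\|$ for a fixed $w_0$, so the family is dominated by an affine image of $(\|W^{a,(n)}_{t+1}\|)_{n\in\NN}$ and we are done. For the second case, convexity of $w\mapsto\|f_{t+1}(w,z)\|$ together with conditional Jensen yields $\|f_{t+1}(W^{a,(n)}_{t+1}, z)\| = \|f_{t+1}(\EE[W^a_{t+1}\mid\sigma^{(n)}_{a,t+1}], z)\| \le \EE[\|f_{t+1}(W^a_{t+1}, z)\|\mid\sigma^{(n)}_{a,t+1}]$, and the dominating family is uniformly integrable by Lemma \ref{lemmaUIcond} since $f_{t+1}(W^a_{t+1}, z)$ is integrable by assumption.

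The third case is the one I expect to be the main obstacle, because componentwise convexity only supplies a one-sided estimate through Jensen. Writing $\phi_i(w) := f_{t+1,i}(w,z)$ for the $i$-th component, conditional Jensen controls the positive part via $\phi_i(W^{a,(n)}_{t+1}) \le \EE[\phi_i(W^a_{t+1})\mid\sigma^{(n)}_{a,t+1}]$, whose dominating family is uniformly integrable by Lemma \ref{lemmaUIcond}. For the negative part I would invoke a supporting hyperplane: at an interior point $w_0$ of $\WWW$ the finite convex function $\phi_i$ admits a subgradient $g_0$, so $\phi_i(W^{a,(n)}_{t+1}) \ge \phi_i(w_0) + \langle g_0, W^{a,(n)}_{t+1} - w_0\rangle$, which bounds the negative part by an affine function of $\|W^{a,(n)}_{t+1}\|$ and hence is uniformly integrable. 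Combining the two one-sided bounds controls $|\phi_i(W^{a,(n)}_{t+1})|$, and summing the finitely many components (using equivalence of norms on $\RR^{d'}$) yields uniform integrability of $(\|f_{t+1}(W^{a,(n)}_{t+1}, z)\|)_{n\in\NN}$. The only delicate point in this last step is guaranteeing existence of the subgradient, which is why I would fix $w_0$ in the relative interior of the convex domain $\WWW$; the remaining estimates are routine.
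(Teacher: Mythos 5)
Your proposal is correct and follows essentially the same route as the paper: reduce via the Lipschitz bound $|v^{(n)}_{t+1}(p',f_{t+1}(W^{a,(n)}_{t+1},z))|\leq |v^{(n)}_{t+1}(p',z_0)| + c_n\|f_{t+1}(W^{a,(n)}_{t+1},z)-z_0\|$, then handle the three hypotheses by, respectively, Lipschitz domination in $w$, conditional Jensen applied to $\|f_{t+1}(\cdot,z)\|$, and (for the component-wise convex case) conditional Jensen for the upper side together with an affine/subgradient lower bound, all closed out by Lemma \ref{lemmaUIcond}. Your treatment is slightly more explicit than the paper's at two points --- establishing uniform integrability of $(W^{a,(n)}_{t+1})_{n\in\NN}$ via conditional Jensen on the norm, and pinning down subgradient existence at a relative interior point --- but these are refinements of the same argument, not a different one.
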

\begin{proof}
  From Lipschitz continuity,
  $$
  |v_{t+1}^{(n)}(p', f_{t+1}(W^{a,(n)}_{t+1}, z))| \leq
  |v^{(n)}_{t+1}(p', z')| + c_n\|f_{t+1}(W^{a,(n)}_{t+1},z) - z'\|
  $$
  holds for $z'\in\ZZZ$. Since $v_{t+1}^{(n)}$ converges to
  $v^*_{t+1}$, it is enough to verify that
  $(c_n \| f_{t+1}(W^{a,(n)}_{t+1},z)-z'\|)_{n\in\NN}$ is uniformly
  integrable to prove the above statement. Now if $f_{t+1}(w,z)$ is
  Lipschitz continuous in $w$,
  \begin{equation} \label{aa}
  \|f_{t+1}(W^{a,(n)}_{t+1},z)\| \leq \|f_{t+1}(w', z)\| + c\| \EE[W^{a}_{t+1}\mid \sigma^{(n)}_{a,t+1}] - w'\|.
  \end{equation}
  for some constant $c$ and $w'\in\WWW$. Now suppose instead that
  $||f_{t+1}(w,z)||$ is convex in $w$. We know from Jensen's
  inequality that
  \begin{equation} \label{bb}
  \|f_{t+1}(W^{a,(n)}_{t+1},z)\| \leq \EE[ \|f_{t+1}(W^a_{t+1},z)\| \mid
  \sigma^{(n)}_{a,t+1}].
  \end{equation}
  Finally, if $f_{t+1}(w,z)$ is convex in $w$ component-wise, Jensen's
  gives
  $$
  f_{t+1}(W^{a,(n)}_{t+1},z) \leq \EE[ f_{t+1}(W^a_{t+1},z) \mid
  \sigma^{(n)}_{a,t+1}]
  $$
  holding component-wise. From the above inequality and the fact that
  convex functions are bounded below by an affine linear function
  (e.g. tangents), the following holds component-wise for some
  constants $b$ and $c'$:
  \begin{equation} \label{cc} |f_{t+1}(W^{a,(n)}_{t+1},z)| \leq |\EE[
    f_{t+1}(W^a_{t+1},z) \mid \sigma^{(n)}_{a,t+1}]| + |b + c'
    \EE[W^a_{t+1} \mid \sigma^{(n)}_{a,t+1}]|.
  \end{equation}
  Using Lemma \ref{lemmaUIcond} and $\sup_{n\in\NN} c_n < \infty$,
  Equations \eqref{aa}, \eqref{bb} or \eqref{cc} reveals that
  $(c_n \|f_{t+1}(W^{a,(n)}_{t+1},z) - z'\|)_{n\in\NN}$ is uniformly
  integrable for $z'\in\ZZZ$ because it is dominated by a family of
  uniformly integrable random variables.
\end{proof}

The above generalizes the condition used by \cite{hinz2014} to ensure
uniform integrability his approximation scheme. Before proceeding,
note that $\Pi^{(n)}$ can be made time dependent without affecting the
convergence above.

\subsection{Modified value functions}

The following establishes the uniform convergence on compact sets of
the resulting modified value functions under each of the disturbance
sampling methods. Let $(m_n)_{n\in\NN}$ and $(n_m)_{m\in\NN}$ be
sequences of natural numbers increasing in $n$ and $m$, respectively.

\begin{lemma} \label{lemmaCCC} Suppose $(h_1^{(n)})_{n\in\NN}$ and
  $(h_2^{(n)})_{n\in\NN}$ are CCC sequences on $\ZZZ$ converging to
  $h_1$ and $h_2$, respectively. Define
  $h^{(n)}_3(z) := \max (h^{(n)}_1(z), h^{(n)}_2(z))$ and
  $h_3(z) := \max (h_1(z), h_2(z))$. Then:
  \begin{itemize}
  \item $(h_1^{(n)} + h_2^{(n)})_{n\in\NN}$ is a CCC sequences on
    $\ZZZ$ converging to $h_1 + h_2$,
  \item $(\mathcal{S}_{\GGG^{m_n}}h_1^{(n)})_{n\in\NN}$ is a CCC
    sequences on $\ZZZ$ converging to $h_1$, and
  \item $(h_3^{(n)})_{n\in\NN}$ is a CCC sequences on $\ZZZ$ converging
    to $h_3$.
  \end{itemize}
\end{lemma}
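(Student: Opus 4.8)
The plan is to reduce all three claims to pointwise convergence on the dense set $\cup_{m\in\NN}\GGG^{(m)}$ and then invoke Lemma \ref{lemmaRockafellar}. For this reduction to apply I must first check that each candidate sequence consists of convex functions: the sum $h_1^{(n)}+h_2^{(n)}$ and the pointwise maximum $h_3^{(n)}=\max(h_1^{(n)},h_2^{(n)})$ of convex functions are convex, and $\mathcal{S}_{\GGG^{(m_n)}}h_1^{(n)}$ is convex because $h_1^{(n)}$ is convex and $\mathcal{S}_{\GGG^{(m)}}$ preserves convexity by Assumption \ref{piecewiseS}. The limits $h_1$, $h_2$, $h_1+h_2$, $h_3$ are themselves convex, being uniform-on-compacts (hence pointwise) limits of convex functions. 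With convexity in hand, uniform convergence on every compact subset of $\ZZZ$ will follow from pointwise convergence on the dense grid.

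The first and third bullets in fact need no appeal to Rockafellar. For the sum, on any compact $C\subset\ZZZ$ the triangle inequality gives $\sup_{C}|(h_1^{(n)}+h_2^{(n)})-(h_1+h_2)|\le \sup_{C}|h_1^{(n)}-h_1|+\sup_{C}|h_2^{(n)}-h_2|$, and both terms tend to $0$ by hypothesis. For the maximum, I would invoke the elementary inequality $|\max(s,t)-\max(s',t')|\le\max(|s-s'|,|t-t'|)$, so that $\sup_{C}|h_3^{(n)}-h_3|\le\max\big(\sup_{C}|h_1^{(n)}-h_1|,\sup_{C}|h_2^{(n)}-h_2|\big)\to0$. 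Combined with the convexity observations above, this already shows the sum and the maximum form CCC sequences with the stated limits.

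The remaining middle bullet, $\mathcal{S}_{\GGG^{(m_n)}}h_1^{(n)}\to h_1$, is where the work lies, and here I would genuinely use Lemma \ref{lemmaRockafellar}: fix $z_0\in\cup_m\GGG^{(m)}$ and aim for $\mathcal{S}_{\GGG^{(m_n)}}h_1^{(n)}(z_0)\to h_1(z_0)$. Writing $\mathcal{S}_{\GGG^{(m_n)}}h_1^{(n)}(z_0)-h_1(z_0)=\big(\mathcal{S}_{\GGG^{(m_n)}}h_1^{(n)}(z_0)-\mathcal{S}_{\GGG^{(m_n)}}h_1(z_0)\big)+\big(\mathcal{S}_{\GGG^{(m_n)}}h_1(z_0)-h_1(z_0)\big)$, the second parenthesis tends to $0$ directly from Assumption \ref{piecewiseS} applied to the fixed convex function $h_1$, since $m_n\to\infty$ and $z_0$ lies on the grid.

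The first parenthesis is the main obstacle, because it demands stability of the operator $\mathcal{S}_{\GGG^{(m_n)}}$ under the perturbation $h_1^{(n)}\to h_1$ of its input, whereas Assumption \ref{piecewiseS} only records the behaviour of $\mathcal{S}_{\GGG^{(m)}}$ on a single fixed function. I would try to close this gap with a diagonal/uniformity argument: setting $a_{m,n}:=\mathcal{S}_{\GGG^{(m)}}h_1^{(n)}(z_0)$, one has $\lim_m a_{m,n}=h_1^{(n)}(z_0)$ for each $n$ and $h_1^{(n)}(z_0)\to h_1(z_0)$, so $a_{m_n,n}\to h_1(z_0)$ will follow provided the inner limit is uniform in $n$, i.e. $\sup_n|\mathcal{S}_{\GGG^{(m)}}h_1^{(n)}(z_0)-h_1^{(n)}(z_0)|\to0$ as $m\to\infty$. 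Such uniformity is what the equicontinuity furnished by the uniform-on-compacts convergence of the convex family $(h_1^{(n)})$ should supply, once combined with the fact that in the paper's concrete schemes $\mathcal{S}_{\GGG^{(m)}}h$ depends only on the finitely many evaluations of $h$ on $\GGG^{(m)}$ and does so continuously. I expect that verifying this uniformity — or, failing a soft argument, reading it off from the explicit structure of $\mathcal{S}$ — is the only delicate point; everything else is a routine consequence of Lemma \ref{lemmaRockafellar} and elementary sup-norm estimates.
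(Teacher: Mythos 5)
Your arguments for the first and third bullets are correct and complete: sums and pointwise maxima of convex functions are convex, and on any compact $C\subset\ZZZ$ the estimates
$\sup_{C}|(h_1^{(n)}+h_2^{(n)})-(h_1+h_2)|\le\sup_{C}|h_1^{(n)}-h_1|+\sup_{C}|h_2^{(n)}-h_2|$
and
$\sup_{C}|h_3^{(n)}-h_3|\le\max\big(\sup_{C}|h_1^{(n)}-h_1|,\sup_{C}|h_2^{(n)}-h_2|\big)$
settle those claims without any appeal to Lemma \ref{lemmaRockafellar}. For comparison, the paper's entire proof of Lemma \ref{lemmaCCC} is the single sentence that the claims ``can be proved easily'' from the definition of uniform convergence on compact sets, Assumption \ref{piecewiseS}, and Lemma \ref{lemmaRockafellar}; on these two bullets you supply strictly more detail than the paper does.

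The gap is in the second bullet, exactly where you flagged it, and it cannot be closed from the stated hypotheses. Assumption \ref{piecewiseS} constrains $\mathcal{S}_{\GGG^{(m)}}$ only on each \emph{fixed} convex function; it gives no stability under perturbation of the argument, and the uniformity you ask for, $\lim_{m\to\infty}\sup_{n\in\NN}|\mathcal{S}_{\GGG^{(m)}}h_1^{(n)}(z_0)-h_1^{(n)}(z_0)|=0$, genuinely does not follow. Counterexample: on $\ZZZ=\RR$ define $\mathcal{S}_{\GGG^{(m)}}h:=h+a_m(h)$, where $a_m(h)=1$ if $h$ restricted to $\GGG^{(m)}$ is constant with value $1/k$ for some integer $k\ge m$, and $a_m(h)=0$ otherwise. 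Each $\mathcal{S}_{\GGG^{(m)}}h$ is convex, and for every fixed convex $h$ one has $a_m(h)=0$ for all large $m$ (otherwise a fixed grid point $z_1\in\GGG^{(1)}$ would satisfy $h(z_1)=1/k_m$ with $k_m\ge m\to\infty$, forcing the positive number $h(z_1)$ to equal infinitely many distinct values), so Assumption \ref{piecewiseS} holds; yet the CCC sequence $h_1^{(n)}\equiv 1/n$, which converges uniformly on all of $\ZZZ$ to $h_1\equiv 0$, gives $\mathcal{S}_{\GGG^{(n)}}h_1^{(n)}\equiv 1/n+1\to 1\ne 0$ along $m_n=n$. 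Hence the middle bullet, and with it the lemma as stated, requires an additional stability hypothesis on the approximation operator --- for instance sup-norm non-expansiveness with respect to grid values, which the linear-interpolation scheme of Section \ref{secNumerical} satisfies, or stability of tangents under uniform convergence of convex functions, which the tangent scheme satisfies. Your instinct to fall back on ``the explicit structure of $\mathcal{S}$'' is therefore the right one, but it lies outside the lemma's hypotheses; the same criticism applies, with more force, to the paper's own one-line proof, which passes over this point silently.
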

\begin{proof}
  They can be proved easily using the definition of uniform
  convergence on compact sets, Assumption \ref{piecewiseS}, and Lemma
  \ref{lemmaRockafellar}.
\end{proof}

\begin{theorem} \label{valueConvergePW} The sampling in Theorem
  \ref{transConvergeMC}, Theorem \ref{transConvergeBounded} or Theorem
  \ref{transConvergeLA} gives
  \[\lim_{n\to\infty} v_t^{(m_{n},n)}(p,z) = \lim_{m\to\infty} v_t^{(m,n_m)}(p,z) = v^*_t(p,z)\]
  for $p\in\PPP$, $z\in\ZZZ$, and $t=T-1,\dots,0$. Also,
  $(v_t^{(m_{n},n)}(p,z))_{n\in\NN}$ and
  $(v_t^{(m,n_m)}(p,z))_{m\in\NN}$ both form CCC sequences in $z$ for
  all $p\in\PPP$ and $t=T-1,\dots,0$.
\end{theorem}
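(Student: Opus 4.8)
The plan is to prove the stronger CCC claim by backward induction on $t$, since uniform convergence on the compact subsets of $\ZZZ$ immediately yields the pointwise limit asserted for every $z\in\ZZZ$. Note that the pointwise statement could not be obtained directly from Assumption \ref{piecewiseS}, which only controls $\mathcal{S}_{\GGG^{(m)}}$ on the dense grid $\cup_m\GGG^{(m)}$; it is precisely the convexity of the iterates, guaranteed by Assumptions \ref{assConvex}, \ref{piecewiseS} and \ref{assSampleConvex}, that upgrades grid convergence to the CCC property via Lemma \ref{lemmaRockafellar}. I will carry out the argument for the diagonal sequence $(v_t^{(m_n,n)})_n$; the sequence $(v_t^{(m,n_m)})_m$ is handled verbatim after interchanging the roles of the two indices.

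For the base case $t=T-1$, I first observe that the constant sequence $v_T^*=r_T$ is convex (Assumption \ref{assConvex}) and hence trivially CCC converging to $v_T^*$, so the second bullet of Lemma \ref{lemmaCCC} shows $(\mathcal{S}_{\GGG^{m_n}}v_T^*)_n$ is CCC converging to $v_T^*$; this is exactly the function sequence fed into the modified transition operator at time $T-1$ in \eqref{modBell1}. For the inductive step $t\le T-2$ the induction hypothesis is that $(v_{t+1}^{(m_n,n)}(p',\cdot))_n$ is, for every $p'\in\PPP$, a CCC sequence converging to $v_{t+1}^*(p',\cdot)$, and this plays the same role as the input sequence above. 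Thus both the base and inductive steps reduce to a single claim: if $(v_{t+1}^{(n)})_n$ is CCC converging to $v_{t+1}^*$, then the sequence built by one application of $\mathcal{T}_t^{(m_n,n)}$ is CCC converging to $v_t^*$.

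To establish this claim I would chain the one-step results. Invoking whichever of Theorem \ref{transConvergeMC}, Theorem \ref{transConvergeBounded} or Theorem \ref{transConvergeLA} matches the chosen sampling, together with the convexity supplied by Assumption \ref{assSampleConvex}, gives that $(\mathcal{K}_t^{a,(n)}v_{t+1}^{(n)}(p,\cdot))_n$ is CCC converging to $\mathcal{K}_t^a v_{t+1}^*(p,\cdot)$ for each fixed $a\in\AAA$. Lemma \ref{lemmaCCC} then does the rest: its second bullet turns this into $(\mathcal{S}_{\GGG^{m_n}}\mathcal{K}_t^{a,(n)}v_{t+1}^{(n)})_n$, still CCC with the same limit, and likewise $(\mathcal{S}_{\GGG^{m_n}}r_t(p,\cdot,a))_n$ is CCC converging to $r_t(p,\cdot,a)$ since $r_t$ is convex; its first bullet adds these two sequences; and finitely many applications of its third bullet take the maximum over $a\in\AAA$. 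The result is precisely $(v_t^{(m_n,n)})_n$ converging in the CCC sense to $\max_{a}(r_t(p,\cdot,a)+\mathcal{K}_t^a v_{t+1}^*(p,\cdot))=v_t^*$, closing the induction.

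Two points will need care. First, the one-step convergence theorems are phrased with the sampling count tied to the sequence index, whereas along the second diagonal the $m$-th iterate uses $n_m$-point sampling; I must therefore check that their proofs use only $n_m\to\infty$ (for the law of large numbers in Theorem \ref{transConvergeMC}, the weak convergence in Theorem \ref{transConvergeBounded}, and Levy's upward theorem in Theorem \ref{transConvergeLA}) together with the CCC convergence of the input functions, both of which persist along any increasing coupling $m\mapsto n_m$, so the re-indexing is harmless. Second, and this is the genuine obstacle, when the local-average sampling of Theorem \ref{transConvergeLA} is used its hypothesis requires $(v_{t+1}^{(m_n,n)}(p',f_{t+1}(W_{t+1}^{a,(n)},z)))_n$ to be uniformly integrable, and this must be re-verified at every level of the backward recursion rather than assumed once. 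I would discharge it via Theorem \ref{lipschitzUI2}, which in turn asks that the iterates $v_{t+1}^{(m_n,n)}$ be Lipschitz in $z$ with constants bounded uniformly in $n$; propagating such a uniform Lipschitz bound through $\mathcal{S}_{\GGG^{(m)}}$, $\mathcal{K}_t^{a,(n)}$ and the maximum is the step most likely to demand an extra structural assumption on the approximation scheme.
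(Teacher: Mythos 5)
Your proposal is correct and follows essentially the same route as the paper's proof: backward induction in $t$, feeding $(\mathcal{S}_{\GGG^{(m_n)}}v_T^*)_n$ (resp.\ the inductive CCC hypothesis) into the one-step convergence theorems together with Assumption \ref{assSampleConvex}, and then assembling $v_t^{(m_n,n)}$ via the sum, $\mathcal{S}_{\GGG^{(m_n)}}$, and finite-maximum bullets of Lemma \ref{lemmaCCC}. The one divergence is your second ``point of care'': the paper does not re-verify the uniform integrability hypothesis of Theorem \ref{transConvergeLA} at each induction level but simply treats it as a standing hypothesis of that sampling scheme (sufficient conditions being delegated to Theorem \ref{lipschitzUI2} and Theorem \ref{theoremUI}), so your flagging of this as a step possibly requiring an extra structural assumption is a fair, slightly more careful reading rather than a different proof.
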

\begin{proof} Let us consider the limit as ${n\to\infty}$ first and
  prove this via backward induction. At $t = T-1$, Lemma
  \ref{lemmaCCC} reveals that
  $(\mathcal{S}_{\GGG^{(m_n)}} v^*_T(p,z))_{n\in\NN}$ forms a CCC
  sequence in $z$ for $p\in\PPP$ and converges to $v^*_T(p,z)$. Now
  from Assumption \ref{assSampleConvex},
  $\mathcal{K}_{T-1}^{a,(n)} \mathcal{S}_{\GGG^{(m_n)}} v^*_T(p,z)$ is
  convex in $z$. Therefore, using Assumption \ref{assSampleConvex} and
  either Theorem \ref{transConvergeMC}, Theorem
  \ref{transConvergeBounded} or Theorem \ref{transConvergeLA} reveals
  that
  $(\mathcal{K}_{T-1}^{a,(n)}
  \mathcal{S}_{\GGG^{(m_n)}}v^*_T(p,z))_{n\in\NN}$ forms a CCC
  sequence in $z$ converging to $\mathcal{K}_{T-1}^{a}
  v^*_T(p,z)$. From the above and Lemma \ref{lemmaCCC}, we know that
  $(\mathcal{S}_{\GGG^{(m_n)}}r_{T-1}(p,z,a) +
  \mathcal{S}_{\GGG^{(m_n)}} \mathcal{K}_{T-1}^{a,(n)}
  \mathcal{S}_{\GGG^{(m_n)}} v^*_T(p,z))_{n\in\NN}$ forms a CCC
  sequence in $z$ and converges to
  $r_{T-1}(p,z,a) + \mathcal{K}_{T-1}^{a} v^*_T(p,z)$. Since $\AAA$ is
  finite, Lemma \ref{lemmaCCC} implies that
  $(v_{T-1}^{(m_n,n)}(p,z))_{n\in\NN}$ forms a CCC sequence in $z$ and
  converges to $v_{T-1}^*(p,z)$ for $p\in\PPP$.  At $t=T-2$, it can be
  shown using the same logic above for $p\in\PPP$ and $a\in\AAA$ that
  $(\mathcal{K}_{T-2}^{a,(n)}v^{(m_n,n)}_{T-1}(p,z))_{n\in\NN}$ forms
  a CCC sequence in $z$ and converges to
  $\mathcal{K}_{T-2}^{a} v^*_{T-1}(p,z)$. Following the same lines of
  argument above eventually leads to
  $(v_{T-2}^{(m_n,n)}(p,z))_{n\in\NN}$ forming a CCC sequence in $z$
  and that it converges to $v^*_{T-2}(p,z)$ for $p\in\PPP$.
  Proceeding inductively for $t = T-3, \dots, 0$ gives the desired
  result.  The proof for the ${m\to\infty}$ case follows the same
  lines as above. \end{proof}


\section{Lower bounds} \label{secLower}

Observe that the convergence results presented so far does require the
modified Bellman operator \eqref{modBell} to be a monotone
operator. However, this is needed to obtain lower and upper bounding
functions.

\begin{assumption} \label{assMonotone} For all convex functions
  $h',h'':\ZZZ \rightarrow \RR$ where $h'(z) \leq h''(z)$ for
  $z\in\ZZZ$, assume
  ${\mathcal S}_{\GGG^{(m)}}h'(z) \leq {\mathcal
    S}_{\GGG^{(m)}}h''(z)$ for $z\in\ZZZ$ and $m\in\NN$.
\end{assumption}

The modified Bellman operator \eqref{modBell} is now monotone i.e. for
$m,n\in\NN$, $p\in\PPP$, $z\in\ZZZ$, and $t=0,\dots,T-1$, we have
$\mathcal{T}_t^{(m,n)}v'(p,z) \leq \mathcal{T}_t^{(m,n)}v''(p,z)$ if
$v'(p',z) \leq v''(p',z)$ for $p'\in\PPP$.  This stems from the
monotonicity of \eqref{modKernel} and Assumption \ref{assMonotone}.
Under the following conditions, the modified value functions
constructed using the disturbance sampling in Theorem
\ref{transConvergeLA} leads to a non-decreasing sequence of lower
bounding functions. Partition $\Pi^{(n+1)}$ is said to refine
$\Pi^{(n)}$ if each component in $\Pi^{(n+1)}$ is a subset of a
component in $\Pi^{(n)}$.

\begin{lemma} \label{lemmaK} Let $t=0,\dots,T-1$ and
  $v(p,f_{t+1}(w,z))$ be convex in $w$ for all $p\in\PPP$. If
  $\Pi^{(n+1)}$ refines $\Pi^{(n)}$, then Theorem
  \ref{transConvergeLA} gives
  $ \mathcal{K}_{t}^{a,(n)}v(p,z) \leq
  \mathcal{K}_{t}^{a,(n+1)}v(p,z).  $
\end{lemma}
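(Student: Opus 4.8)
The plan is to recognize that the local-average sampling of Theorem \ref{transConvergeLA} turns the modified transition operator into a genuine conditional expectation, and that the refinement of partitions produces a nested family of sigma-algebras along which a tower-property (martingale) argument applies. First I would rewrite the operator in probabilistic form. Recalling from Theorem \ref{transConvergeLA} the random variable $W^{a,(n)}_{t+1} = \EE[W^a_{t+1}\mid\sigma^{(n)}_{a,t+1}]$, which equals the local average $W^{a,(n)}_{t+1}(k)$ on the event $\{W^a_{t+1}\in\Pi^{(n)}(k)\}$ of probability $\rho^{a,(n)}_{t+1}(k)$, the definition \eqref{modKernel} becomes
\[
  \mathcal{K}^{a,(n)}_{t}v(p,z) = \sum_{p'\in\PPP}\alpha_{t+1}^{a,p,p'}\,\EE\bigl[v(p', f_{t+1}(W^{a,(n)}_{t+1},z))\bigr].
\]
Since the coefficients $\alpha_{t+1}^{a,p,p'}$ are nonnegative, it suffices to prove the inequality termwise, i.e.\ that $\EE[v(p', f_{t+1}(W^{a,(n)}_{t+1},z))] \leq \EE[v(p', f_{t+1}(W^{a,(n+1)}_{t+1},z))]$ for each fixed $p'\in\PPP$ and $z\in\ZZZ$.

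The next step exploits the refinement. Because $\Pi^{(n+1)}$ refines $\Pi^{(n)}$, every cell of $\Pi^{(n)}$ is a union of cells of $\Pi^{(n+1)}$, so $\sigma^{(n)}_{a,t+1}\subseteq\sigma^{(n+1)}_{a,t+1}$. The tower property then yields the key identity
\[
  W^{a,(n)}_{t+1} = \EE[W^a_{t+1}\mid\sigma^{(n)}_{a,t+1}] = \EE\bigl[\EE[W^a_{t+1}\mid\sigma^{(n+1)}_{a,t+1}]\mid\sigma^{(n)}_{a,t+1}\bigr] = \EE[W^{a,(n+1)}_{t+1}\mid\sigma^{(n)}_{a,t+1}].
\]
With this in hand, I would apply the conditional Jensen inequality to the map $w\mapsto v(p', f_{t+1}(w,z))$, which is convex in $w$ by hypothesis: conditioning on $\sigma^{(n)}_{a,t+1}$ gives
\[
  v\bigl(p', f_{t+1}(W^{a,(n)}_{t+1},z)\bigr) = v\bigl(p', f_{t+1}(\EE[W^{a,(n+1)}_{t+1}\mid\sigma^{(n)}_{a,t+1}],z)\bigr) \leq \EE\bigl[v(p', f_{t+1}(W^{a,(n+1)}_{t+1},z))\mid\sigma^{(n)}_{a,t+1}\bigr].
\]

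Finally, taking unconditional expectations of both sides and using the tower property once more gives $\EE[v(p', f_{t+1}(W^{a,(n)}_{t+1},z))]\leq\EE[v(p', f_{t+1}(W^{a,(n+1)}_{t+1},z))]$, and summing against the nonnegative weights $\alpha_{t+1}^{a,p,p'}$ completes the argument. The hard part will not be the calculation but the conceptual choice: seeing that the comparison should be effected by conditioning on the \emph{coarser} sigma-algebra and invoking conditional Jensen, rather than attempting a cell-by-cell comparison of the two discretizations. A secondary technical point is ensuring the conditional expectations and conditional Jensen inequality are legitimate, which requires integrability of $v(p', f_{t+1}(W^{a,(n)}_{t+1},z))$; this is precisely the uniform integrability assumed in Theorem \ref{transConvergeLA} and made available by Theorem \ref{lipschitzUI2}.
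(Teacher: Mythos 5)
Your proof is correct, but it takes a recognizably different route from the paper's, and in fact the paper does exactly the ``cell-by-cell comparison of the two discretizations'' that you chose to avoid. The paper's proof assumes without loss of generality that a single component of $\Pi^{(n)}$ (the last one) is split into the last two components of $\Pi^{(n+1)}$, observes that the coarse local average is then the explicit convex combination $W^{a,(n)}_{t+1}(n) = \frac{\rho^{a,(n+1)}_{t+1}(n)}{\rho^{a,(n)}_{t+1}(n)} W^{a,(n+1)}_{t+1}(n) + \frac{\rho^{a,(n+1)}_{t+1}(n+1)}{\rho^{a,(n)}_{t+1}(n)} W^{a,(n+1)}_{t+1}(n+1)$ of the two fine local averages, applies the two-point Jensen inequality to $w \mapsto v(p',f_{t+1}(w,z))$, and sums over cells. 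Your argument is the conditional-expectation formulation of the same inequality: refinement gives $\sigma^{(n)}_{a,t+1} \subseteq \sigma^{(n+1)}_{a,t+1}$, the tower property gives $W^{a,(n)}_{t+1} = \EE[W^{a,(n+1)}_{t+1} \mid \sigma^{(n)}_{a,t+1}]$, and conditional Jensen finishes; since the sigma-algebras here are finite, conditional Jensen is exactly atom-by-atom finite Jensen, so the two proofs coincide at bottom. What your formulation buys: no WLOG reduction or bookkeeping, arbitrary refinements handled in one stroke, and a presentation that sits naturally beside the martingale structure ($n \mapsto W^{a,(n)}_{t+1}$ is a martingale) that the paper already exploits in Theorem \ref{transConvergeLA} via Levy's upward theorem. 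What the paper's version buys: it is elementary and self-contained, using nothing beyond finite convex combinations. One small correction to your closing technical remark: no appeal to the uniform integrability hypothesis of Theorem \ref{transConvergeLA}, nor to Theorem \ref{lipschitzUI2}, is needed to legitimize your conditional Jensen step, because $W^{a,(n)}_{t+1}$ and $W^{a,(n+1)}_{t+1}$ are finitely supported, so every expectation involved is a finite sum of real values ($v$ being real-valued on $\XXX$); that uniform integrability is needed only for the limiting statement as $n\to\infty$, not for this monotonicity comparison.
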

\begin{proof}
  Without loss of generality, assume the last two components of
  $\Pi^{(n+1)}$ are both subsets of the last component of
  $\Pi^{(n)}$. With this,
  \begin{multline*}
    \mathcal{K}^{a, (n)}_{t} v(p,z) - \sum_{p'\in\PPP}
    \alpha_{t+1}^{a,p,p'} \rho^{a,(n)}_{t+1}(n)
    v(p',f_{t+1}(W^{a,(n)}_{t+1}(n),z)) \\ = \mathcal{K}^{a,
      (n+1)}_{t} v(p,z) - \sum_{p'\in\PPP} \alpha_{t+1}^{a,p,p'}
    \sum_{k=n}^{n+1} \rho^{a,(n+1)}_{t+1}(k)
    v(p',f_{t+1}(W^{a,(n+1)}_{t+1}(k),z)).
  \end{multline*}
  Since $v(p',f_{t+1}(w,z))$ is convex in $w$, it holds that
  \begin{multline*}
    v(p',f_{t+1}(W^{a,(n)}_{t+1}(n),z))  \leq
    \frac{\rho^{a,(n+1)}_{t+1}(n)}{ \rho^{a,(n)}_{t+1}(n)}
    v(p',f_{t+1}(W^{a,(n+1)}_{t+1}(n),z)) \\ +
    \frac{\rho^{a,(n+1)}_{t+1}(n+1)}{
      \rho^{a,(n)}_{t+1}(n)}v(p',f_{t+1}(W^{a,(n+1)}_{t+1}(n+1),z))
  \end{multline*}
  because
  $$
  W^{a,(n)}_{t+1}(n) = \frac{\rho^{a,(n+1)}_{t+1}(n)}{ \rho^{a,(n)}_{t+1}(n)}
  W^{a,(n+1)}_{t+1}(n) + \frac{\rho^{a,(n+1)}_{t+1}(n+1)}{ \rho^{a,(n)}_{t+1}(n)}
  W^{a,(n+1)}_{t+1}(n+1)
  $$
  and
  $\rho^{a,(n+1)}_{t+1}(n) + \rho^{a,(n+1)}_{t+1}(n+1) = \rho^{a,(n)}_{t+1}(n)$.
  Therefore, for all $p'\in\PPP$
  $$
  \rho^{a,(n)}_{t+1}(n) v(p',f_{t+1}(W^{a,(n)}_{t+1}(n),z)) \leq \sum_{k=n}^{n+1}
  \rho^{a,(n+1)}_{t+1}(k) v(p',f_{t+1}(W^{a,(n+1)}_{t+1}(k),z))
  $$
  and so
  $\mathcal{K}^{a, (n)}_{t} v(p,z) \leq \mathcal{K}^{a, (n+1)}_{t}
  v(p,z)$ as claimed. \qed
\end{proof}

\begin{theorem}\label{lowerBound} 
  Using Theorem \ref{transConvergeLA} gives for $p\in\PPP$,
  $z\in\ZZZ$, $m,n\in\NN$, and $t=0,\dots,T-1$:
  \begin{itemize}
  \item $v_t^{(m,n)}(p,z) \leq v_t^*(p,z)$ if
    $v^*_{t'}(p', f_{t'}(w, z'))$ is convex in $w$ and if
    ${\mathcal S}_{\GGG^{(m')}}h(z') \leq h(z')$ for $p'\in\PPP$,
    $z'\in\ZZZ$, $t'=1,\dots,T$, and all convex functions $h$.
  \item $v_t^{(m,n)}(p,z) \leq v_t^{(m,n+1)}(p,z)$ when
    $\Pi^{(n'+1)}$ refines $\Pi^{(n')}$ and if
    $\mathcal{S}_{\GGG^{(m')}}v^*_{T}(p', f_{T}(w, z'))$ and
    $v^{(m',n')}_{t'}(p', f_{t'}(w, z'))$ are convex in $w$ for
    $m',n'\in\NN$, $p'\in\PPP$, $z'\in\ZZZ$, and $t'=1,\dots,T-1$.
  \item $v_t^{(m,n)}(p,z) \leq v_t^{(m+1,n)}(p,z)$ if
    ${\mathcal S}_{\GGG^{(m')}}h(z') \leq {\mathcal
      S}_{\GGG^{(m'+1)}}h(z')$ for $z'\in\ZZZ$, $m'\in\NN$, and all
    convex functions $h$.
  \end{itemize}
\end{theorem}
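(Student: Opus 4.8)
The plan is to prove all three inequalities by backward induction on $t$, running from $t=T-1$ down to $t=0$, and at each step comparing the two recursions term by term inside $\max_{a\in\AAA}\bigl(\mathcal{S}_{\GGG^{(m)}} r_t(p,z,a) + \mathcal{S}_{\GGG^{(m)}}\mathcal{K}^{a,(n)}_t v_{t+1}(p,z)\bigr)$. Since the pointwise maximum of functions respects a term-by-term inequality, it suffices to bound the reward term and the kernel term separately for each fixed $a\in\AAA$. Two monotonicity facts will be used repeatedly: Assumption \ref{assMonotone} makes $\mathcal{S}_{\GGG^{(m)}}$ monotone on convex functions, and the modified kernel \eqref{modKernel} is a nonnegatively weighted sum and hence monotone in its function argument. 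The recurring bookkeeping obligation throughout is to check that every function to which an $\mathcal{S}_{\GGG^{(m)}}$-property is applied is convex in $z$; this is always available from Assumption \ref{assConvex}, Assumption \ref{assSampleConvex}, and the convexity theorem for the modified value functions.

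For the lower bound (first item) the two new ingredients are that the local-average sampling underestimates the true kernel and that $\mathcal{S}_{\GGG^{(m)}}$ underestimates. First I would observe that, since $W^{a,(n)}_{t+1}(k)=\EE[W^a_{t+1}\mid W^a_{t+1}\in\Pi^{(n)}(k)]$ and $v(p',f_{t+1}(\cdot,z))$ is convex in $w$, Jensen's inequality on each cell followed by the tower property gives $\mathcal{K}^{a,(n)}_t v(p,z)\leq \mathcal{K}^{a}_t v(p,z)$. Combining this with the hypothesis $\mathcal{S}_{\GGG^{(m')}}h\leq h$ and kernel monotonicity, the base case at $t=T-1$ follows by chaining $\mathcal{S}_{\GGG^{(m)}}\mathcal{K}^{a,(n)}_{T-1}\mathcal{S}_{\GGG^{(m)}}v^*_T \leq \mathcal{K}^{a,(n)}_{T-1}\mathcal{S}_{\GGG^{(m)}}v^*_T \leq \mathcal{K}^{a,(n)}_{T-1}v^*_T \leq \mathcal{K}^{a}_{T-1}v^*_T$, together with $\mathcal{S}_{\GGG^{(m)}}r_{T-1}\leq r_{T-1}$. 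The inductive step is identical, except that kernel monotonicity first absorbs the induction hypothesis $v^{(m,n)}_{t+1}\leq v^*_{t+1}$ before the Jensen step is applied to $v^*_{t+1}$.

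For monotonicity in $n$ (second item) I would replace the Jensen step by Lemma \ref{lemmaK}: under refinement of $\Pi^{(n)}$ and the stated convexity in $w$, one has $\mathcal{K}^{a,(n)}_t v \leq \mathcal{K}^{a,(n+1)}_t v$. The base case at $t=T-1$ applies this with $v=\mathcal{S}_{\GGG^{(m)}}v^*_T$ (whose composition with $f_T$ is convex in $w$ by hypothesis), noting that the reward term carries no $n$-dependence; the inductive step then combines Lemma \ref{lemmaK}, kernel monotonicity used to pass the induction hypothesis $v^{(m,n)}_{t+1}\leq v^{(m,n+1)}_{t+1}$ through $\mathcal{K}^{a,(n+1)}_t$, and finally the monotonicity of $\mathcal{S}_{\GGG^{(m)}}$.

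For monotonicity in $m$ (third item) the controlling hypothesis is $\mathcal{S}_{\GGG^{(m')}}h\leq \mathcal{S}_{\GGG^{(m'+1)}}h$. The step requiring the most care—and what I expect to be the main obstacle—is that here both the approximation operator \emph{and} its argument change with $m$, so a single application of a monotonicity property no longer suffices and a two-stage argument is needed. Writing $g^{(m)}:=\mathcal{K}^{a,(n)}_t v^{(m,n)}_{t+1}$, kernel monotonicity together with the induction hypothesis gives $g^{(m)}\leq g^{(m+1)}$, whence $\mathcal{S}_{\GGG^{(m)}}g^{(m)}\leq \mathcal{S}_{\GGG^{(m)}}g^{(m+1)}\leq \mathcal{S}_{\GGG^{(m+1)}}g^{(m+1)}$, using Assumption \ref{assMonotone} for the first inequality and the grid-monotonicity hypothesis for the second. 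The base case handles the extra $m$-dependence inside $\mathcal{S}_{\GGG^{(m)}}v^*_T$ in the same two stages. Everything else is a routine propagation of inequalities through the recursion.
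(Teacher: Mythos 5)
Your proposal is correct and follows essentially the same route as the paper: backward induction with term-by-term comparison under the pointwise maximum, the conditional Jensen/tower argument giving $\mathcal{K}^{a,(n)}_t v^*_{t+1} \leq \mathcal{K}^{a}_t v^*_{t+1}$ for the first item, Lemma \ref{lemmaK} for monotonicity in $n$, and the grid-monotonicity hypothesis combined with Assumption \ref{assMonotone} for monotonicity in $m$. Your two-stage argument for the third item simply spells out what the paper states tersely, and your bookkeeping of where Assumption \ref{assSampleConvex} supplies the convexity needed to invoke the $\mathcal{S}_{\GGG^{(m)}}$ hypotheses matches the paper's implicit usage.
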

\begin{proof} The three inequalities are proven separately using
  backward induction.

  1) Recall that
  $W_{t+1}^{a,(n)} = \EE[W_{t+1}^{a} \mid \sigma^{(n)}_{a,{t+1}}]$ for
  $a\in\AAA$ and $t=0,\dots,T-1$.. From the tower property, Jensen's
  inequality and the monotonicity of \eqref{modKernel}:
  $$
    \EE[v_T^*(p', f_{T}(W_{T}^{a},z'))]
    \geq \EE[v_T^*(p', f_{T}(W_{T}^{a,(n)},z'))] 
    \geq \sum_{k=1}^n \rho_T^{a,(n)}(k) \mathcal{S}_{\GGG^{(m)}} v^*_T(p', f_T(W^{a,(n)}_T(k), z'))
  $$
  for all $p'\in\PPP$ and $z'\in\ZZZ$. Therefore,
  $\mathcal{K}^{a,(n)}_{T-1}\mathcal{S}_{\GGG^{(m)}} v^*_T(p,z) \leq
  \mathcal{K}^{a}_{T-1}v^*_T(p,z)$ which in turn implies that
  $v_{T-1}^{(m,n)}(p,z) \leq v_{T-1}^*(p,z)$ since
  ${\mathcal S}_{\GGG^{(m)}}r_{T-1}(p,z,a) + {\mathcal
    S}_{\GGG^{(m)}}\mathcal{K}^{a,(n)}_{T-1}\mathcal{S}_{\GGG^{(m)}}
  v^*_T(p,z) \leq r_{T-1}(p,z,a)+ \mathcal{K}^{a}_{T-1}v^*_T(p,z)$.
  Proceeding inductively for $t=T-2,\dots,0$ using a similar argument
  as above gives the first inequality in the statement.

  2) Lemma \ref{lemmaK} gives
  $\mathcal{K}^{a, (n)}_{T-1}\mathcal{S}_{\GGG^{(m)}} v^{*}_T(p,z)
  \leq \mathcal{K}^{a, (n+1)}_{T-1}\mathcal{S}_{\GGG^{(m)}}
  v^{*}_T(p,z)$ which implies
  $v^{(m,n)}_{T-1}(x) \leq v^{(m,n+1)}_{T-1}(p,z)$. Using the
  monotonicity of \eqref{modKernel} and a similar argument as above,
  one can show
  $$
  \mathcal{K}^{a, (n)}_{T-2}v^{(m,n)}_{T-1}(p,z) \leq \mathcal{K}^{a,
    (n)}_{T-2}v^{(m,n+1)}_{T-1}(p,z) \leq \mathcal{K}^{a,
    (n+1)}_{T-2}v^{(m,n+1)}_{T-1}(p,z)
  $$ 
  $\implies v^{(m,n)}_{T-2}(p,z) \leq v^{(m,n+1)}_{T-2}(p,z)$. Proceeding
  inductively for $t=T-3,\dots,0$ proves the desired result.

  3) This can be easily proved by backward induction using the
  monotonicty of \eqref{modKernel} and by the fact that
  ${\mathcal S}_{\GGG^{(m)}}r_t(p,z,a) \leq {\mathcal
    S}_{\GGG^{(m+1)}}r_t(p,z,a)$ and
  ${\mathcal S}_{\GGG^{(m)}}r_T(p,z) \leq {\mathcal
    S}_{\GGG^{(m+1)}}r_T(p,z)$ for $p\in\PPP$, $z\in\ZZZ$, $a\in\AAA$,
  $m\in\NN$, and $t=0,\dots,T-1$.
\end{proof}

It turns out that if the modified value functions are bounded above by
the true value functions such as in the first case point of Theorem
\ref{lowerBound}, the uniform integrability assumption in Theorem
\ref{transConvergeLA} holds automatically. This is proved in the next
subsection.

\subsection{Uniform integrability
  condition} \label{secUIConvexity} 

Theorem \ref{theoremUI} below differs from Theorem \ref{lipschitzUI2}
in that Lipschitz continuity in $z$ is not assumed for the
approximating functions.  Instead, the value functions are assumed to
bound the approximating functions from above.  In the following, the
sequence of functions $(v_{t+1}^{(n)}(p,z))_{n\in\NN}$ from the
previous section is reused.  Recall that this sequence converges
uniformly to $v^*_{t+1}$ on compact sets.

\begin{lemma} \label{lemmaSL} Suppose that
  $v_{t+1}^{(n)}(p,z) \leq v_{t+1}^{*}(p,z)$ for all $n\in\NN$,
  $p\in\PPP$, and $z\in\ZZZ$. For fixed $p\in\PPP$ and all
  $z'\in\ZZZ$, there exists constants $c_{n,z'}\in\RR^d$ such that
  $$
  v_{t+1}^{*}(p, z) \geq v_{t+1}^{(n)}(p, z) \geq v_{t+1}^{(n)}(p,z')
  + c_{n,z'}^T (z' - z)
  $$
  for $z\in\ZZZ$ and $n\in\NN$. It also holds that
  $\sup_{n\in\NN} \| c_{n,z'}\| < \infty$.
\end{lemma}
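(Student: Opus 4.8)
The plan is to recognize the claimed inequality as a subgradient (supporting hyperplane) inequality for the convex function $v_{t+1}^{(n)}(p,\cdot)$, and then to bound the relevant subgradients uniformly in $n$ by exploiting the uniform convergence on compact sets. The first inequality, $v_{t+1}^{*}(p,z) \geq v_{t+1}^{(n)}(p,z)$, is simply the standing hypothesis. For the second, fix $p \in \PPP$ and $z' \in \ZZZ$. Since $\ZZZ$ is open and convex and each $v_{t+1}^{(n)}(p,\cdot)$ is a finite convex function on $\ZZZ$, the point $z'$ lies in the interior of the effective domain, so the subdifferential of $v_{t+1}^{(n)}(p,\cdot)$ at $z'$ is nonempty by standard convex analysis \cite[Theorem 23.4]{rockafellar}. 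Picking any subgradient $g_{n,z'}$ and setting $c_{n,z'} := -g_{n,z'}$ yields, for every $z \in \ZZZ$,
$$v_{t+1}^{(n)}(p,z) \geq v_{t+1}^{(n)}(p,z') + g_{n,z'}^{T}(z - z') = v_{t+1}^{(n)}(p,z') + c_{n,z'}^{T}(z' - z),$$
which is precisely the asserted bound, with $\|c_{n,z'}\| = \|g_{n,z'}\|$.

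The main work is the uniform bound $\sup_{n\in\NN} \|c_{n,z'}\| < \infty$, for which I would invoke the standard estimate that a convex function bounded on a ball has its subgradients controlled by that bound. Concretely, choose $r > 0$ small enough that the closed ball $\overline{B}(z', r)$ is contained in $\ZZZ$; this is possible because $\ZZZ$ is open. Since $(v_{t+1}^{(n)}(p,\cdot))_{n\in\NN}$ converges uniformly to $v^*_{t+1}(p,\cdot)$ on the compact set $\overline{B}(z', r)$ and $v^*_{t+1}(p,\cdot)$ is continuous (being convex on an open set), the sequence is uniformly bounded on $\overline{B}(z', r)$; call the bound $M$. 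Then, assuming $g_{n,z'} \neq 0$ (the claim being trivial otherwise), evaluating the subgradient inequality at the boundary point $z = z' + r\,g_{n,z'}/\|g_{n,z'}\|$ gives
$$M \geq v_{t+1}^{(n)}(p, z) \geq v_{t+1}^{(n)}(p, z') + r\|g_{n,z'}\| \geq -M + r\|g_{n,z'}\|,$$
so that $\|c_{n,z'}\| = \|g_{n,z'}\| \leq 2M/r$ for every $n$, which is the desired uniform bound.

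The only delicate point is verifying the uniform boundedness constant $M$, and this is exactly where uniform convergence on compact sets (rather than mere pointwise convergence) is needed: it guarantees that the tail of the sequence stays within, say, distance one of the bounded limit $v^*_{t+1}(p,\cdot)$ on $\overline{B}(z', r)$, while the finitely many initial terms are each continuous and hence bounded on the compact ball. I expect this to be the crux; once a single $n$-independent bound $M$ on a fixed ball is secured, the convex subgradient estimate delivers the conclusion at once. Note that the hypothesis $v_{t+1}^{(n)} \leq v_{t+1}^{*}$ is used only to supply the first inequality in the displayed chain, whereas the uniform bound rests entirely on the convergence of the approximating sequence.
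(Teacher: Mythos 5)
Your proof is correct, and its geometric core — evaluating the supporting-hyperplane inequality at a point a fixed distance $r$ from $z'$ in the steepest direction, so that an unbounded slope would push the tangent value above a locally bounded quantity — is the same mechanism the paper uses. The difference lies in the logical form and in which hypothesis supplies the local bound. The paper argues by contradiction: if $\sup_{n\in\NN}\|c_{n,z'}\|$ were infinite, then (since the intercepts $v^{(n)}_{t+1}(p,z')$ converge to $v^*_{t+1}(p,z')$) some tangent line would exceed $v^*_{t+1}$ at a nearby point $\widehat z$, contradicting the chain $v^*_{t+1}\geq v^{(n)}_{t+1}\geq \text{tangent}$; so the paper's slope bound leans on the ordering hypothesis $v^{(n)}_{t+1}\leq v^*_{t+1}$ as a ready-made upper envelope, plus pointwise convergence at $z'$. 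You instead derive the direct quantitative estimate $\|c_{n,z'}\|\leq 2M/r$ from a two-sided uniform bound $M$ on a closed ball $\overline{B}(z',r)\subset\ZZZ$, obtained from uniform convergence on compacts together with continuity of the finitely many initial terms; as you correctly observe, this makes the ordering hypothesis irrelevant to the slope bound, which is a genuine (if small) strengthening: your argument shows the uniform subgradient bound holds for any locally uniformly bounded sequence of convex functions, ordered or not. Both proofs are valid; the paper's is shorter because it exploits the envelope $v^*_{t+1}$, while yours is more self-contained and quantitative.
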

\begin{proof}
  The first part follows from the definition of a tangent for a convex
  function. Now note that if $\sup_{n\in\NN} \| c_{n,z'}\|$ is not
  bounded for $z'\in\ZZZ$, then there exists $\widehat n$ and
  $\widehat z$ where
  $$
  v_{t+1}^{*}(p, \widehat z) \leq v_{t+1}^{(\widehat n)}(p,z') + c_{\widehat n, \widehat z}^T (z' - \widehat z)
  $$
  since $v^{(n)}_{t+1}$ converges to $v^*_{t+1}$. This yields a
  contradiction.
\end{proof}

\begin{theorem} \label{theoremUI} Suppose
  $v_{t+1}^{(n)}(p,z) \leq v_{t+1}^{*}(p,z)$ for $n\in\NN$,
  $p\in\PPP$, and $z\in\ZZZ$. Assume $v^{*}_{t+1}(p, f_{t+1}(w, z))$
  is convex in $w$ and let
  $W_{t+1}^{a,(n)} = \EE[W_{t+1}^a \mid \sigma^{(n)}_{a,{t+1}}]$.  If
  for $z\in\ZZZ$ either:
  \begin{itemize}
  \item $f_{t+1}(w,z)$ is Lipschitz continuous in $w$,
  \item $\|f_{t+1}(w,z)\|$ is convex in $w$, or
  \item $f_{t+1}(w,z)$ is convex in $w$ component-wise,
  \end{itemize}
  holds, then
  $(v_{t+1}^{(n)}(p,f_{t+1}(W_{t+1}^{a,(n)}, z)))_{n\in\NN}$ is
  uniformly intergrable.
\end{theorem}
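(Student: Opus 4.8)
The plan is to sandwich the sequence $\big(v_{t+1}^{(n)}(p,f_{t+1}(W_{t+1}^{a,(n)}, z))\big)_{n\in\NN}$ between two uniformly integrable families of random variables and then use the elementary fact that a sequence trapped between uniformly integrable families is itself uniformly integrable. This parallels the proof of Theorem \ref{lipschitzUI2}; the novelty is that the domination $v_{t+1}^{(n)}\leq v_{t+1}^*$ together with the assumed convexity of $w\mapsto v^*_{t+1}(p,f_{t+1}(w,z))$ replaces the uniform Lipschitz hypothesis used there to control the upper tail.

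First I would produce the upper bound. Since $v_{t+1}^{(n)}(p,\cdot)\leq v_{t+1}^*(p,\cdot)$ and $W_{t+1}^{a,(n)}=\EE[W_{t+1}^a\mid\sigma^{(n)}_{a,t+1}]$ is a conditional expectation, the conditional form of Jensen's inequality applied to the convex map $w\mapsto v_{t+1}^*(p,f_{t+1}(w,z))$ gives
\[
v_{t+1}^{(n)}(p,f_{t+1}(W_{t+1}^{a,(n)}, z)) \leq v_{t+1}^*(p,f_{t+1}(W_{t+1}^{a,(n)}, z)) \leq \EE\big[v_{t+1}^*(p,f_{t+1}(W_{t+1}^a, z))\mid\sigma^{(n)}_{a,t+1}\big].
\]
The random variable $Y:=v_{t+1}^*(p,f_{t+1}(W_{t+1}^a, z))$ is integrable because $\mathcal{K}_t^a v_{t+1}^*$ is well defined under Assumption \ref{assExist}, so Lemma \ref{lemmaUIcond} shows that the family of conditional expectations on the right is uniformly integrable.

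Next I would produce the lower bound using the tangent inequality of Lemma \ref{lemmaSL}. Applying that bound (with fixed tangent location $z'\in\ZZZ$) at the random point $f_{t+1}(W_{t+1}^{a,(n)}, z)$ yields
\[
v_{t+1}^{(n)}(p,f_{t+1}(W_{t+1}^{a,(n)}, z)) \geq v_{t+1}^{(n)}(p,z') + c_{n,z'}^{T}\big(z'-f_{t+1}(W_{t+1}^{a,(n)}, z)\big),
\]
where $v_{t+1}^{(n)}(p,z')$ is bounded in $n$ (it converges to $v_{t+1}^*(p,z')$) and $\sup_{n\in\NN}\|c_{n,z'}\|<\infty$. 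By Cauchy--Schwarz the last term is dominated in absolute value by $(\sup_{n}\|c_{n,z'}\|)\,\|f_{t+1}(W_{t+1}^{a,(n)}, z)\|$, so it remains only to check that $(\|f_{t+1}(W_{t+1}^{a,(n)}, z)\|)_{n\in\NN}$ is uniformly integrable. This is precisely the estimate carried out in the proof of Theorem \ref{lipschitzUI2}: under any one of the three structural conditions on $f_{t+1}$, the bounds \eqref{aa}, \eqref{bb} or \eqref{cc} together with Lemma \ref{lemmaUIcond} dominate $\|f_{t+1}(W_{t+1}^{a,(n)}, z)\|$ by a uniformly integrable family, so the lower bound above is uniformly integrable.

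Finally I would conclude. Writing $L_n$ and $U_n$ for the lower and upper bounds just obtained, we have $L_n \leq v_{t+1}^{(n)}(p,f_{t+1}(W_{t+1}^{a,(n)}, z)) \leq U_n$ and hence $|v_{t+1}^{(n)}(p,f_{t+1}(W_{t+1}^{a,(n)}, z))| \leq |L_n| + |U_n|$; since the sum of two uniformly integrable families is uniformly integrable, the target sequence is uniformly integrable. The main obstacle is the lower tail, as the upper bound is immediate from conditional Jensen once the domination is used: controlling $\|f_{t+1}(W_{t+1}^{a,(n)}, z)\|$ is what forces the three alternative hypotheses on $f_{t+1}$, and I would dispatch it by directly reusing the three-case argument already established for Theorem \ref{lipschitzUI2}.
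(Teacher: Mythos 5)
Your proposal is correct and follows essentially the same route as the paper's own proof: conditional Jensen's inequality plus the domination $v^{(n)}_{t+1}\leq v^*_{t+1}$ for the upper bound, the tangent inequality of Lemma \ref{lemmaSL} (with the uniform bounds on $c_{n,z'}$ and $v^{(n)}_{t+1}(p,z')$) for the lower bound, reuse of the three-case estimates from Theorem \ref{lipschitzUI2} to handle $\|f_{t+1}(W^{a,(n)}_{t+1},z)\|$, and finally domination by a sum of uniformly integrable families. Your explicit justification of the integrability of $v^*_{t+1}(p,f_{t+1}(W^a_{t+1},z))$ via Assumption \ref{assExist} is a small welcome addition, but the argument is otherwise the paper's.
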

\begin{proof}
  Jensen's inequality and $v_{t+1}^{(n)} \leq v_{t+1}^*$ gives
  $$
  v_{t+1}^{(n)}(p, f_{t+1}(W^{a,(n)}_{t+1}, z)) \leq v_{t+1}^{*}(p,
  f_{t+1}(W^{a,(n)}_{t+1}, z)) \leq \EE [v_{t+1}^{*}(p,f_{t+1}
  (W^{a}_{t+1}, z)) \mid \sigma^{(n)}_{a,{t+1}}].
  $$
  Now from Lemma \ref{lemmaSL}, there exists constants $c_{n,z'}\in\RR^d$
  such that for all $n\in\NN$:
  $$
  v_{t+1}^{(n)}(p, f_{t+1}(W^{a,(n)}_{t+1}, z)) \geq v_{t+1}^{(n)}(p,z') + c_n^T
  (z' - f_{t+1}(W^{a,(n)}_{t+1},z))
  $$
  for some $z'\in\ZZZ$ with probability one. Using the above
  inequalities: 
  \begin{multline*}
    |v_{t+1}^{(n)}(p, f_{t+1}(W^{a,(n)}_{t+1}, z))| \leq |\EE
    [v_{t+1}^{*}(p, f_{t+1} (W^{a}_{t+1}, z)) \mid
    \sigma^{(n)}_{a,{t+1}}]| \\ + |v_{t+1}^{(n)}(p,z') + c_{n,z'}^T (z' -
    f_{t+1}(W^{a,(n)}_{t+1},z)) |
  \end{multline*}
  almost surely.  The first term on the right forms a uniformly
  integrable family of random variables from Lemma
  \ref{lemmaUIcond}. Since $v_{t+1}^{(n)}$ converges to $v_{t+1}^*$,
  $\sup_{n\in\NN} v_{t+1}^{(n)}(p,z') < \infty$. From Lemma
  \ref{lemmaSL}, $\sup_{n\in\NN} \|c_{n,z'}\| < \infty$.  Now
  $(f_{t+1}(W^{a,(n)}_{t+1},z))_{n\in\NN}$ can be shown to be
  uniformly integrable using a similar argument as in the proof of
  Theorem \ref{lipschitzUI2}. Therefore,
  $(v_{t+1}^{(n)}(p, f_{t+1}(W_{t+1}^{a,(n)}, z)))_{n\in\NN}$ is
  dominated by a family of uniformly integrable random variables and
  so is also uniformly integrable.
\end{proof}


\section{Upper bounds} \label{secUpper}

For the case where $\WWW$ is compact, a sequence of upper bounding
function approximations can also be constructed using the following
setting from \cite{hernandezlerma_runggaldier1994}. It is well known
that a convex function continuous on a compact convex set attains its
maximum at an extreme point of that set. The following performs a type
of averaging of these extreme points to obtain an upper bound using
Jensen's inequality (see \cite[Section 5]{birge_wets1986}). Let us
assume that the closures of each of the components in partition
$\Pi^{(n)}$ are convex and contain a finite number of extreme
points. Denote the set of extreme points of the closure
$\widetilde\Pi^{(n)}(k)$ of each component $\Pi^{(n)}(k)$ by
$$
\mathcal{E}(\widetilde\Pi^{(n)}(k)) = \{e^{(n)}_{k,i} : i = 1, \dots, L_k^{(n)}\}
$$
where $L_k^{(n)}$ is the number of extreme points in
$\widetilde\Pi^{(n)}(k)$. Suppose there exists weighting functions
$q^{a,(n)}_{t+1,k,i}:\WWW\to[0,1]$ satisfying
\begin{equation*}
  \sum_{i=1}^{L^{(n)}_k}q^{a,(n)}_{t+1,k,i}(w) = 1 
  \quad \text{and} \quad
  \sum_{i=1}^{L^{(n)}_k}q^{a,(n)}_{t+1,k,i}(w) e^{(n)}_{k,i} = w 
\end{equation*}
for $w\in\widetilde\Pi^{(n)}(k)$ and $k=1,\dots,n$. Suppose
$\rho^{a,(n)}_{t+1}(k) = \PP(W^{a}_{t+1} \in \Pi^{(n)}(k)) > 0$ for
$k=1,\dots,n$ and define random variables
$\overline{W}_{t+1,k}^{a,(n)}$ satisfying
$$
\PP\left(\overline{W}_{t+1,k}^{a,(n)}=e^{(n)}_{k,i}\right) =
\frac{\bar q^{a,(n)}_{t+1, k,i}}{\rho^{a,(n)}_{t+1}(k)} \quad
\text{where} \quad \bar q^{a,(n)}_{t+1, k,i} =
\int_{\widetilde\Pi^{(n)}(k)} q^{a,(n)}_{t+1,k,i}(w)
\mu^{a}_{t+1}(\mathrm{d}w)
$$
and $\mu^{a}_{t+1}(\mathbf{B}) = \PP(W^{a}_{t+1} \in
\mathbf{B})$. To grasp the intuition for the upper bound, note that if
$g:\WWW \to \RR$ is a convex and continuous function, then
$$
\EE[g(W^a_{t+1})\11(W^a_{t+1} \in \Pi^{(n)}(k))] \leq
\sum_{i=1}^{L_k^{(n)}} \bar q^{a,(n)}_{t+1, k,i} g(e^{(n)}_{k,i})
$$
for $k=1,\dots,n$ (see \cite[Corollary
7.2]{hernandezlerma_runggaldier1994}) and so
\begin{equation} \label{upperfunc}
\EE[g(W^a_{t+1})] \leq \sum_{k=1}^n\sum_{i=1}^{L_k^{(n)}} \bar
q^{a,(n)}_{t+1, k,i} g(e^{(n)}_{k,i}).
\end{equation}

For the following theorem, define random variable
$$
\overline{W}_{t+1}^{a,(n)} := \sum_{k=1}^{n} \overline{W}_{t+1,k}^{a,(n)} \11\left(W_{t+1}^{a}\in\Pi^{(n)}(k)\right).
$$
Recall that $(v_{t+1}^{(n)}(p,z))_{n\in\NN}$ is a CCC sequence in $z$
for $p\in\PPP$ and that $v^{(n)}_{t+1}$ converges to
$v^*_{t+1}$. Also, recall $v_{t+1}^{(n)}(p, f_{t+1}(w,z))$ is
continuous in $w$.

\begin{lemma} \label{transConvergeUpper} If the diameter of the
  partition vanishes i.e.  $\lim_{n\to\infty} \delta^{(n)} = 0$, then
  $$
  \lim_{n\to\infty} \EE[v_{t+1}^{(n)}(p,
  f_{t+1}(\overline{W}_{t+1}^{a,(n)},z))] = \EE[v_{t+1}^*(p,
  f_{t+1}(W_{t+1}^{a},z))], \quad z\in\ZZZ.
  $$
  If, in addition,
  $\EE[v_{t+1}^{(n)}(p, f_{t+1}(\overline{W}_{t+1}^{a,(n)},z))]$ is
  convex in $z$ for $n\in\NN$, then the sequence of functions
  $(\EE[v_{t+1}^{(n)}(p,
  f_{t+1}(\overline{W}_{t+1}^{a,(n)},z))])_{n\in\NN}$ form a CCC
  sequence in $z$.
\end{lemma}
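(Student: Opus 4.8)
The plan is to mirror the proof of Theorem \ref{transConvergeBounded}, replacing the representative points there with the extreme-point valued variables $\overline{W}^{a,(n)}_{t+1}$; the essential observation is that these variables also collapse onto $W^a_{t+1}$ as the mesh vanishes. First I would establish the pathwise bound $\|\overline{W}^{a,(n)}_{t+1} - W^a_{t+1}\| \leq \delta^{(n)}$. On the event $\{W^a_{t+1} \in \Pi^{(n)}(k)\}$ the variable $\overline{W}^{a,(n)}_{t+1}$ equals $\overline{W}^{a,(n)}_{t+1,k}$, which is supported on the extreme points $e^{(n)}_{k,i}\in\widetilde\Pi^{(n)}(k)$, while $W^a_{t+1}$ itself lies in $\Pi^{(n)}(k)\subseteq\widetilde\Pi^{(n)}(k)$; since the diameter of the closure equals that of the component, both points sit in a set of diameter at most $\delta^{(n)}$. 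As these events partition $\Omega$, the bound holds almost surely, and $\lim_{n\to\infty}\delta^{(n)}=0$ then forces $\overline{W}^{a,(n)}_{t+1}\to W^a_{t+1}$ almost surely.

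Next I would pass to the integrand. Setting $M_n := \sup_{w\in\WWW}|v_{t+1}^*(p, f_{t+1}(w,z)) - v_{t+1}^{(n)}(p, f_{t+1}(w,z))|$, the compactness of $\WWW$ together with the continuity of $f_{t+1}(\cdot,z)$ makes $f_{t+1}(\WWW,z)$ a compact subset of $\ZZZ$, so the uniform convergence of $v_{t+1}^{(n)}$ to $v_{t+1}^*$ on compact sets yields $M_n\to 0$, exactly as in Theorem \ref{transConvergeMC}. Writing
\[
|v_{t+1}^{(n)}(p, f_{t+1}(\overline{W}^{a,(n)}_{t+1}, z)) - v_{t+1}^*(p, f_{t+1}(W^a_{t+1}, z))| \leq M_n + |v_{t+1}^*(p, f_{t+1}(\overline{W}^{a,(n)}_{t+1}, z)) - v_{t+1}^*(p, f_{t+1}(W^a_{t+1}, z))|,
\]
the first term vanishes, and the second vanishes almost surely by the continuity of $v_{t+1}^*(p, f_{t+1}(\cdot,z))$ in $w$ (Assumption \ref{assContinuity}) applied via the continuous mapping theorem to the convergence just established. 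Hence the integrand converges to $v_{t+1}^*(p, f_{t+1}(W^a_{t+1}, z))$ almost surely.

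To upgrade this to convergence of the expectations I would invoke the bounded convergence theorem. Every realization of $\overline{W}^{a,(n)}_{t+1}$ is an extreme point lying in the compact set $\WWW$, so $f_{t+1}(\overline{W}^{a,(n)}_{t+1},z)$ ranges in the compact $f_{t+1}(\WWW,z)$, on which the $v_{t+1}^{(n)}$ are uniformly bounded (being uniformly convergent to the continuous, hence bounded, $v_{t+1}^*$). The integrand is therefore dominated by a constant, and the expectations converge for each $z\in\ZZZ$, which gives the first assertion. The second assertion follows at once: this deterministic pointwise limit holds on all of $\ZZZ$, in particular on the dense set $\cup_{m\in\NN}\GGG^{(m)}$, so under the additional convexity hypothesis Lemma \ref{lemmaRockafellar} shows the sequence converges uniformly on compact sets and is thus CCC.

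The main obstacle is the simultaneous dependence on $n$ of both the approximating function $v_{t+1}^{(n)}$ and its random argument $\overline{W}^{a,(n)}_{t+1}$; this is precisely what the $M_n$ decomposition resolves, reducing everything to the already-available uniform convergence on compacts and the plain continuous mapping theorem. The \emph{only genuinely new ingredient} relative to Theorem \ref{transConvergeBounded} is the diameter bound for the extreme-point valued variables, which is what guarantees that the extra freedom in the upper-bound construction does not disturb the limit.
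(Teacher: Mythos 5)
Your proposal is correct and takes essentially the same approach as the paper: both rest on the single new observation that $\overline{W}^{a,(n)}_{t+1}$ lies within $\delta^{(n)}$ of $W^{a}_{t+1}$ (since on each event $\{W^{a}_{t+1}\in\Pi^{(n)}(k)\}$ both points sit in $\widetilde\Pi^{(n)}(k)$), after which the argument of Theorem \ref{transConvergeBounded} is repeated and the CCC claim follows from Lemma \ref{lemmaRockafellar}. The only difference is cosmetic: the paper passes through the $L^1$ bound and convergence in distribution, while you prove the stronger pathwise bound and conclude via the $M_n$ decomposition, the continuous mapping theorem, and bounded convergence, which simply spells out in more detail what the paper compresses into ``follows the same lines as Theorem \ref{transConvergeBounded}.''
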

\begin{proof} 
  By construction, $\overline{W}_{t+1,k}^{a,(n)}$ takes values in the
  extreme points of $\widetilde\Pi^{(n)}(k)$ for $k=1,\dots,
  n$ and so
  $$
  \lim_{n\to\infty} \EE[ || \overline W_{t+1}^{a,(n)} - W_{t+1}^{a} ||] \leq
  \lim_{n\to\infty} \delta^{(n)} = 0.
  $$
  Thus, $\overline W_{t+1}^{a,(n)}$ converges to $W_{t+1}^{a}$ in
  distribution as $n\to\infty$. Therefore, the proof for the above
  statement then follows the same lines as the proof for Theorem
  \ref{transConvergeBounded}.
 \end{proof}

Let us define the following alternative modified transition operator:
\begin{equation} \label{upperModKern}
{\mathcal K}^{a, (n)}_{t}v(p, z) = \sum_{p'\in\PPP}
\alpha_{t+1}^{a,p,p'}\EE[v(p',f_{t+1}(\overline{W}_{t+1}^{a,(n)},z))].
\end{equation}
The next theorem establishes the uniform convergence on compact sets
of the resulting modified value functions when the new modified
transition operator \eqref{upperModKern} is used in place of the
original modified transition operator \eqref{modKernel}. Recall that
$(m_n)_{n\in\NN}$ and $(n_m)_{m\in\NN}$ are sequences of natural
numbers increasing in $n$ and $m$, respectively.
  
\begin{theorem} \label{valueConvergeUpper} Suppose
  $\lim_{n\to\infty} \delta^{(n)} = 0$. Using \eqref{upperModKern}
  gives
  \[\lim_{n\to\infty} v_t^{(m_{n},n)}(p,z) = \lim_{m\to\infty} v_t^{(m,n_m)}(p,z) = v^*_t(p,z)\]
  for $p\in\PPP$, $z\in\ZZZ$, and $t=T-1,\dots,0$. Also,
  $(v_t^{(m_{n},n)}(p,z))_{n\in\NN}$ and
  $(v_t^{(m,n_m)}(p,z))_{m\in\NN}$ both form CCC sequences in $z$ for
  all $p\in\PPP$ and $t=T-1,\dots,0$.
\end{theorem}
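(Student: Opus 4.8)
The plan is to run the backward-induction argument of Theorem \ref{valueConvergePW} essentially verbatim, the only structural change being that the convergence of the modified transition operator is now supplied by Lemma \ref{transConvergeUpper} in place of Theorem \ref{transConvergeMC}, Theorem \ref{transConvergeBounded} or Theorem \ref{transConvergeLA}. Throughout I read $\mathcal{K}_t^{a,(n)}$ as the extreme-point averaging operator \eqref{upperModKern}, so that the convexity hypotheses of Assumption \ref{assSampleConvex} refer to this operator and furnish exactly the ``in addition'' convexity needed to invoke the CCC half of Lemma \ref{transConvergeUpper}. I first treat the limit $n\to\infty$ along the grid index $m_n$, and then observe that the $m\to\infty$ case along $n_m$ is identical because $n_m\to\infty$ forces $\delta^{(n_m)}\to 0$, keeping Lemma \ref{transConvergeUpper} applicable.

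For the base case $t=T-1$, Lemma \ref{lemmaCCC} gives that $(\mathcal{S}_{\GGG^{(m_n)}}v_T^*(p,z))_{n\in\NN}$ is a CCC sequence in $z$ converging to $v_T^*(p,z)$. Feeding this as the inner sequence into Lemma \ref{transConvergeUpper} (whose diameter hypothesis $\delta^{(n)}\to 0$ is the standing assumption of the theorem) and using the convexity of $\mathcal{K}_{T-1}^{a,(n)}\mathcal{S}_{\GGG^{(m_n)}}v_T^*(p,z)$ from Assumption \ref{assSampleConvex}, I obtain that $(\mathcal{K}_{T-1}^{a,(n)}\mathcal{S}_{\GGG^{(m_n)}}v_T^*(p,z))_{n\in\NN}$ is a CCC sequence converging to $\mathcal{K}_{T-1}^{a}v_T^*(p,z)$. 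Applying $\mathcal{S}_{\GGG^{(m_n)}}$ (which preserves the CCC property and the limit by Lemma \ref{lemmaCCC} and Assumption \ref{piecewiseS}), adding the CCC sequence $\mathcal{S}_{\GGG^{(m_n)}}r_{T-1}(p,z,a)$, and taking the maximum over the finite set $\AAA$, Lemma \ref{lemmaCCC} yields that $(v_{T-1}^{(m_n,n)}(p,z))_{n\in\NN}$ is a CCC sequence in $z$ converging to $v_{T-1}^*(p,z)$. The inductive step is the same: assuming $(v_{t+1}^{(m_n,n)}(p',z))_{n\in\NN}$ is CCC and converges to $v_{t+1}^*(p',z)$ for every $p'\in\PPP$, I substitute it into Lemma \ref{transConvergeUpper} (convexity of $\mathcal{K}_t^{a,(n)}v_{t+1}^{(m_n,n)}$ again coming from Assumption \ref{assSampleConvex}), then apply $\mathcal{S}_{\GGG^{(m_n)}}$, add the reward term, and maximise over $\AAA$ exactly as in the recursion \eqref{modBell1}, closing the induction down to $t=0$.

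The step I expect to be the main obstacle is guaranteeing convexity in $z$ at each stage, since Lemma \ref{transConvergeUpper} upgrades pointwise convergence to uniform convergence on compacts only for convex sequences; mere pointwise convergence would not suffice for the CCC conclusion. This is precisely what Assumption \ref{assSampleConvex}, read with $\mathcal{K}_t^{a,(n)}$ denoting \eqref{upperModKern}, supplies, and it is consistent to demand this because \eqref{upperModKern} is again a finite nonnegatively weighted sum of terms $v(p',f_{t+1}(\cdot,z))$ and hence has the same structural form as \eqref{modKernel}. Granting this convexity, the remaining manipulations (finite sums, compositions with $\mathcal{S}_{\GGG^{(m_n)}}$, and finite maxima) are routine applications of Lemma \ref{lemmaCCC}. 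The $m\to\infty$ statement then follows by running the identical argument along the index $m$ with partition index $n_m$, using that $n_m\to\infty$ makes $\delta^{(n_m)}\to 0$.
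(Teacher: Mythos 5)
Your proposal is correct and takes essentially the same route as the paper: the paper's own proof is a one-line remark that the argument of Theorem \ref{valueConvergePW} goes through verbatim with Lemma \ref{transConvergeUpper} supplying the transition-operator convergence, which is exactly the backward induction you spell out. Your two added observations (reading Assumption \ref{assSampleConvex} with respect to the operator \eqref{upperModKern}, and noting that $n_m\to\infty$ keeps $\delta^{(n_m)}\to 0$ for the $m\to\infty$ limit) are faithful elaborations of what the paper leaves implicit.
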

\begin{proof} The proof follow the same lines as the proof in Theorem
  \ref{valueConvergePW} but using Lemma
  \ref{transConvergeUpper} instead. 
\end{proof}
 
Under certain conditions, these modified expected value functions also
form a non-decreasing sequence of upper bounding functions as shown
below. The following gives analogous versions of Lemma \ref{lemmaK}
and Theorem \ref{lowerBound} but for the upper bound case.

\begin{lemma} \label{transUpper} Suppose $\Pi^{(n+1)}$ refines
  $\Pi^{(n)}$, that $v^{(n)}_{t+1}(p, f_{t+1}(w, z))$ is convex in
  $w$, and that
  $v_{t+1}^{(n)}(p,z) \geq v_{t+1}^{(n+1)}(p,z) \geq v_{t+1}^{*}(p,z)
  $ for $p\in\PPP$, $z\in\ZZZ$, and $n\in\NN$. Then
  $$
  \EE[v_{t+1}^{(n)}(p,f_{t+1}(\overline{W}_{t+1}^{a,(n)},z))] \geq
  \EE[v_{t+1}^{(n+1)}(p, f_{t+1}(\overline{W}_{t+1}^{a,(n+1)},z))]
  \geq \EE[v_{t+1}^{*}(p, f_{t+1}({W}_{t+1}^{a},z))]
  $$
  for $a\in\AAA$, $p\in\PPP$, $z\in\ZZZ$, and $n\in\NN$. 
\end{lemma}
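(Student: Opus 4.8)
The plan is to prove the two inequalities in the chain separately, beginning with the rightmost one, which is the more routine. Throughout, fix $a\in\AAA$, $p\in\PPP$, $z\in\ZZZ$, and abbreviate $g^{(n)}(w) := v_{t+1}^{(n)}(p, f_{t+1}(w,z))$ and $g^{*}(w) := v_{t+1}^{*}(p, f_{t+1}(w,z))$. By the lemma's hypothesis every $g^{(n)}$ is convex in $w$, and since $v_{t+1}^{(n)}$ converges to $v_{t+1}^{*}$ uniformly on compact sets, the pointwise limit $g^{*}$ is again convex in $w$; all these functions are continuous in $w$ as compositions of continuous functions.

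For the rightmost inequality I would first use the pointwise bound $v_{t+1}^{(n+1)} \geq v_{t+1}^{*}$, which gives $\EE[g^{(n+1)}(\overline{W}_{t+1}^{a,(n+1)})] \geq \EE[g^{*}(\overline{W}_{t+1}^{a,(n+1)})]$. It then remains to compare the right-hand side with $\EE[g^{*}(W_{t+1}^{a})]$, and since $g^{*}$ is convex and continuous in $w$ this is exactly the Jensen-type bound \eqref{upperfunc} applied to $g=g^{*}$ on the level-$(n+1)$ partition, namely $\EE[g^{*}(\overline{W}_{t+1}^{a,(n+1)})] = \sum_{k,i} \bar q^{a,(n+1)}_{t+1,k,i}\, g^{*}(e^{(n+1)}_{k,i}) \geq \EE[g^{*}(W_{t+1}^{a})]$. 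Chaining the two bounds yields the rightmost inequality.

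For the leftmost inequality I would split the argument once more. The pointwise monotonicity $v_{t+1}^{(n)} \geq v_{t+1}^{(n+1)}$ immediately gives $\EE[g^{(n)}(\overline{W}_{t+1}^{a,(n)})] \geq \EE[g^{(n+1)}(\overline{W}_{t+1}^{a,(n)})]$, so it suffices to prove the refinement estimate $\EE[g^{(n+1)}(\overline{W}_{t+1}^{a,(n)})] \geq \EE[g^{(n+1)}(\overline{W}_{t+1}^{a,(n+1)})]$ for the single fixed convex function $g = g^{(n+1)}$. Here I would mimic the reduction in Lemma \ref{lemmaK}: without loss of generality assume $\Pi^{(n+1)}$ refines $\Pi^{(n)}$ by splitting one component $\widetilde\Pi^{(n)}(k)$ into two sub-components and leaving the others untouched, so that only the contribution from that component must be compared. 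On that component the fine upper bound evaluates $g$ at the extreme points of the two sub-components while the coarse upper bound evaluates $g$ at the extreme points $e^{(n)}_{k,i}$ of the parent, and since each sub-component sits inside the convex parent, every extreme point of a sub-component is a convex combination of the $e^{(n)}_{k,i}$.

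The main obstacle is closing this refinement step. Writing $G(w) := \sum_i q^{a,(n)}_{t+1,k,i}(w)\, g(e^{(n)}_{k,i})$ for the convex-combination interpolant attached to the coarse component, one has $\EE[g(\overline{W}_{t+1}^{a,(n)})] = \int G\,\mu^{a}_{t+1}(\mathrm{d}w)$ on that component, and the fine bound is the integral of the analogous sub-interpolants $G_1, G_2$; because the weighting functions reproduce both the unit mass and the barycenter, $G$, $G_1$ and $G_2$ all dominate $g$ and agree with it at their own extreme points, so the desired inequality reduces to the pointwise domination $G \geq G_1$ and $G \geq G_2$ on the respective sub-cells. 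For the natural affine (barycentric) averaging weights $G$ is affine, hence dominates the affine sub-interpolant at the sub-vertices and therefore throughout the sub-cell; this is precisely the mean-preserving-contraction, or conditional-Jensen, relationship $\overline{W}_{t+1}^{a,(n+1)} = \EE[\overline{W}_{t+1}^{a,(n)}\mid\cdot]$ between the two levels, and it is where the refinement hypothesis on $\Pi^{(n+1)}$ and the compatibility of the two weighting schemes do their work. The delicate point, and the one I expect to require the most care, is verifying this pointwise domination for general convex components rather than for arbitrary weighting functions; once it is in hand, integrating against $\mu^{a}_{t+1}$ and summing over components closes the leftmost inequality.
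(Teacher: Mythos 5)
The paper offers no argument for this lemma at all: its ``proof'' is a citation to \cite[Theorem 7.8]{hernandezlerma_runggaldier1994} and \cite[Theorem 1.3]{hernandezlerma_etal1995}, so your attempt is necessarily a self-contained route. The parts of it that work, work cleanly: the right-hand inequality follows, as you say, from $v^{(n+1)}_{t+1}\geq v^{*}_{t+1}$ pointwise, the convexity of $w\mapsto v^{*}_{t+1}(p,f_{t+1}(w,z))$ (as a pointwise limit of the convex functions $g^{(n)}$), continuity in $w$, and the bound \eqref{upperfunc} on the level-$(n+1)$ partition via the intended identity $\EE[g(\overline{W}^{a,(n+1)}_{t+1})]=\sum_{k}\sum_{i}\bar q^{a,(n+1)}_{t+1,k,i}\,g(e^{(n+1)}_{k,i})$. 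Likewise, reducing the left-hand inequality to the single-function refinement estimate $\EE[g(\overline{W}^{a,(n)}_{t+1})]\geq \EE[g(\overline{W}^{a,(n+1)}_{t+1})]$, $g=g^{(n+1)}$, by the monotonicity $v^{(n)}_{t+1}\geq v^{(n+1)}_{t+1}$ is correct.

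The genuine gap is the refinement estimate itself, and it sits exactly where you flagged it: your pointwise domination of the fine interpolant by the coarse one is proved only when the coarse weighting functions are affine (so that $G$ is affine), and this cannot be repaired under the paper's actual hypotheses, because the statement is false for general admissible weighting functions when $d'\geq 2$. Concretely, take $\WWW=[0,1]^2$, let the level-$n$ partition be the single cell $\WWW$ with the bilinear weights $q_A=(1-x)(1-y)$, $q_B=x(1-y)$, $q_C=xy$, $q_D=(1-x)y$ (these satisfy the partition-of-unity and barycenter conditions but are not affine), and let the level-$(n+1)$ partition be the two triangles $\{y\leq x\}$ and $\{y>x\}$ with their unique affine barycentric weights; this is a refinement. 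Take $f_{t+1}(w,z)=w$, all value functions equal to the convex function $g(x,y)=(x+y-1)^2$ (so every hypothesis of the lemma holds with equality), and $\mu^{a}_{t+1}$ uniform on a small disc centred at $(1/2,1/2)$. Then the coarse bound is $\bar q_A+\bar q_C=\EE[(1-x)(1-y)]+\EE[xy]=1/2$, while the fine bound is approximately $1$ (each triangle contributes about $\tfrac12\cdot\bigl(\tfrac12 g(0,0)+\tfrac12 g(1,1)\bigr)$ or the analogue, up to terms of the order of the disc radius), so the fine bound exceeds the coarse one and the lemma's first inequality fails; both remain valid upper bounds for $\EE[g(W^{a}_{t+1})]\approx 0$, consistent with \eqref{upperfunc}. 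So the left-hand inequality needs structure the paper's setup does not supply --- $d'=1$ (where the two constraints force the affine weights, covering the paper's numerics), simplicial cells, or an explicit assumption that the weighting schemes are affine or otherwise compatible across refinement levels --- which is in effect what the cited references work with. Your instinct that this was ``the delicate point'' was exactly right; the honest conclusion is that no blind proof can close it without adding such a hypothesis.
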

\begin{proof} See \cite[Theorem 7.8]{hernandezlerma_runggaldier1994}
  or \cite[Theorem 1.3]{hernandezlerma_etal1995}.
 \end{proof}
  
 \begin{theorem} \label{valueUpper2} Using \eqref{upperModKern} gives for $p\in\PPP$, $z\in\ZZZ$,
   $m,n\in\NN$, and $t=0,\dots,T-1$:
  \begin{itemize}
  \item $v_t^{(m,n)}(p,z) \geq v_t^*(p,z)$ when
    $v^*_{t'}(p', f_{t'}(w, z'))$ is convex in $w$ and if
    ${\mathcal S}_{\GGG^{(m')}}h(z') \geq h(z')$ for $p'\in\PPP$,
    $z'\in\ZZZ$, $t'=1,\dots,T$, and all convex functions $h$.
  \item $v_t^{(m,n)}(p,z) \geq v_t^{(m,n+1)}(p,z)$ if $\Pi^{(n'+1)}$
    refines of $\Pi^{(n')}$ and if
    $\mathcal{S}_{\GGG^{(m')}}v^*_{T}(p', f_{T}(w, z'))$ and
    $v^{(m',n')}_{t'}(p', f_{t'}(w, z'))$ are convex in $w$ for
    $m',n'\in\NN$, $p'\in\PPP$, $z'\in\ZZZ$, and $t=1,\dots,T-1$.
  \item $v_t^{(m,n)}(p,z) \geq v_t^{(m+1,n)}(p,z)$ if
    ${\mathcal S}_{\GGG^{(m')}}h(z') \geq {\mathcal
      S}_{\GGG^{(m'+1)}}h(z')$ for $z'\in\ZZZ$, $m'\in\NN$, and all
    possible convex functions $h$.
  \end{itemize}
\end{theorem}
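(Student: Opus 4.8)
The plan is to mirror the three-part backward induction used for Theorem \ref{lowerBound}, reversing every inequality and replacing the local-average kernel \eqref{modKernel} by the extreme-point kernel \eqref{upperModKern}. The two properties driving everything are the monotonicity of \eqref{upperModKern} in its function argument (it is an expectation, so $v' \leq v''$ pointwise forces $\mathcal{K}^{a,(n)}_t v'(p,z) \leq \mathcal{K}^{a,(n)}_t v''(p,z)$) and the monotonicity of $\mathcal{S}_{\GGG^{(m)}}$ supplied by Assumption \ref{assMonotone}. I would establish each of the three items separately by backward induction from $t=T-1$.

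For the first item the engine is the extreme-point bound \eqref{upperfunc}: applied to the convex function $g(w)=v^*_{t+1}(p',f_{t+1}(w,z))$ it yields $\mathcal{K}^{a,(n)}_t v^*_{t+1}(p,z) \geq \mathcal{K}^a_t v^*_{t+1}(p,z)$, since averaging a convex function over the extreme points overshoots its expectation. At $t=T-1$ I would chain $\mathcal{S}_{\GGG^{(m)}}v^*_T \geq v^*_T$ (from $\mathcal{S}_{\GGG^{(m)}}h \geq h$), then kernel monotonicity, then the extreme-point bound, to obtain $\mathcal{K}^{a,(n)}_{T-1}\mathcal{S}_{\GGG^{(m)}}v^*_T(p,z) \geq \mathcal{K}^a_{T-1}v^*_T(p,z)$; a second use of $\mathcal{S}_{\GGG^{(m)}} \geq \mathrm{id}$ on this (convex) output and of $\mathcal{S}_{\GGG^{(m)}}r_{T-1} \geq r_{T-1}$ then gives $v^{(m,n)}_{T-1}(p,z) \geq \max_{a\in\AAA}(r_{T-1}(p,z,a)+\mathcal{K}^a_{T-1}v^*_T(p,z)) = v^*_{T-1}(p,z)$. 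The inductive step is identical once $v^{(m,n)}_{t+1} \geq v^*_{t+1}$ is assumed: monotonicity of \eqref{upperModKern} promotes this to $\mathcal{K}^{a,(n)}_t v^{(m,n)}_{t+1} \geq \mathcal{K}^{a,(n)}_t v^*_{t+1} \geq \mathcal{K}^a_t v^*_{t+1}$, and the surrounding $\mathcal{S}_{\GGG^{(m)}}$ operators only push the value further up.

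For the second item I would replace Lemma \ref{lemmaK} by Lemma \ref{transUpper}. The convexity-in-$w$ hypotheses listed in this item are exactly what Lemma \ref{transUpper} requires, and the refinement of $\Pi^{(n')}$ supplies the rest; fed the induction hypothesis $v^{(m,n)}_{t+1} \geq v^{(m,n+1)}_{t+1}$ (anchored below by the first item, $v^{(m,n+1)}_{t+1}\geq v^*_{t+1}$), Lemma \ref{transUpper} directly yields $\mathcal{K}^{a,(n)}_t v^{(m,n)}_{t+1}(p,z) \geq \mathcal{K}^{a,(n+1)}_t v^{(m,n+1)}_{t+1}(p,z)$, since it already absorbs both the change of partition and the change of function. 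Applying the monotone $\mathcal{S}_{\GGG^{(m)}}$ and maximising over $\AAA$ then propagates $v^{(m,n)}_t \geq v^{(m,n+1)}_t$. The third item is the most routine: because the reward functions are convex, the hypothesis $\mathcal{S}_{\GGG^{(m)}}h \geq \mathcal{S}_{\GGG^{(m+1)}}h$ gives $\mathcal{S}_{\GGG^{(m)}}r_t \geq \mathcal{S}_{\GGG^{(m+1)}}r_t$, and an interleaved use of this inequality (note that $\mathcal{K}^{a,(n)}_t$ carries no $m$ dependence) together with kernel monotonicity and the monotonicity of $\mathcal{S}_{\GGG^{(m+1)}}$ yields $v^{(m,n)}_t \geq v^{(m+1,n)}_t$ by backward induction.

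The main obstacle I anticipate is the bookkeeping in the first item's inductive step: because \eqref{modBell1} sandwiches the kernel between two copies of $\mathcal{S}_{\GGG^{(m)}}$ and inserts a further $\mathcal{S}_{\GGG^{(m)}}$ around $v^*_T$ at the terminal stage, one must apply the correct monotonicity ($\mathcal{S}_{\GGG^{(m)}}\geq\mathrm{id}$, kernel monotonicity, and the extreme-point bound \eqref{upperfunc}) to the correct operand and in the correct order so that the chain of inequalities stays pointed the same way. The convexity in $z$ of every intermediate function, guaranteed under Assumptions \ref{assConvex}, \ref{piecewiseS} and \ref{assSampleConvex}, is precisely what makes $\mathcal{S}_{\GGG^{(m)}}\geq\mathrm{id}$ legitimately applicable at each stage, so I would keep that convexity explicitly in view throughout the induction.
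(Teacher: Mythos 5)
Your proposal is correct and follows essentially the same route as the paper: a three-part backward induction driven by the monotonicity of \eqref{upperModKern}, the ordering properties of ${\mathcal S}_{\GGG^{(m)}}$, and the extreme-point domination of the true kernel (Lemma \ref{transUpper}, whose content is precisely the bound \eqref{upperfunc} you invoke). The only cosmetic difference is in the second item, where the paper splits the step into kernel monotonicity followed by a partition-refinement application of Lemma \ref{transUpper}, whereas you let the lemma absorb the change of partition and the change of function simultaneously; the substance is identical.
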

\begin{proof}
  The three inequalities are proven separately using backward induction.

  1) The monotonicity of \eqref{upperModKern} and Lemma
  \ref{transUpper}
  $\implies \mathcal{K}^{a,(n)}_{T-1}\mathcal{S}_{\GGG^{(m)}}
  v^*_T(p,z) \geq \mathcal{K}^{a}_{T-1}v^*_T(p,z)$
  $\implies {\mathcal S}_{\GGG^{(m)}}r_{T-1}(p,z,a) +{\mathcal
    S}_{\GGG^{(m)}}\mathcal{K}^{a,(n)}_{T-1}\mathcal{S}_{\GGG^{(m)}}
  v^*_T(p,z) \geq r_{T-1}(p,z,a) + \mathcal{K}^{a}_{T-1}v^*_T(p,z)$
  $\implies v_{T-1}^{(m,n)}(p,z) \geq v_{T-1}^*(p,z)$.  Proceeding
  inductively for $t=T-2,\dots,0$ using a similar argument as above
  gives the desired result.

  2) Lemma \ref{transUpper}
  $\implies \mathcal{K}^{a, (n)}_{T-1}\mathcal{S}_{\GGG^{(m)}}
  v^{*}_T(p,z) \geq \mathcal{K}^{a, (n+1)}_{T-1}\mathcal{S}_{\GGG^{(m)}}
  v^{*}_T(p,z)$. Therefore,
  $v^{(m,n)}_{T-1}(p,z) \geq v^{(m,n+1)}_{T-1}(p,z)$. Using the
  monotonicity of \eqref{upperModKern} and Lemma \ref{transUpper}, it
  holds that
  $$
  \mathcal{K}^{a, (n)}_{T-2}v^{(m,n)}_{T-1}(p,z) \geq \mathcal{K}^{a,
    (n)}_{T-2}v^{(m,n+1)}_{T-1}(p,z) \geq \mathcal{K}^{a,
    (n+1)}_{T-2}v^{(m,n+1)}_{T-1}(p,z)
  $$ 
  $\implies v^{(m,n)}_{T-2}(p,z) \geq v^{(m,n+1)}_{T-2}(p,z)$. Proceeding
  inductively for $t=T-3,\dots,0$ proves the second part of the statement.

  3) This can be proved via backward induction using the monotonicty
  of \eqref{upperModKern} and by the fact that
  ${\mathcal S}_{\GGG^{(m)}}r_t(p,z,a) \geq {\mathcal
    S}_{\GGG^{(m+1)}}r_t(p,z,a)$ and
  ${\mathcal S}_{\GGG^{(m)}}r_T(p,z) \geq {\mathcal
    S}_{\GGG^{(m+1)}}r_T(p,z)$ for $p\in\PPP$, $a\in\AAA$, $m\in\NN$,
  and $t=0,\dots,T-1$.
\end{proof}


\section{Numerical demonstration} \label{secNumerical}

A natural choice for $\S_{\GGG^{(m)}}$ is to use piecewise linear
functions. A piecewise linear function can be represented by a matrix
where each row or column captures the relevant information for each
linear functional. This is attractive given the availability of fast
linear algebra software.

\subsection{Bermudan put option} \label{secPut}

Markov decision processes are common in financial markets. For
example, a Bermudan put option represents the right but not the
obligation to sell the underlying asset for a predetermined strike
price $K$ at prespecified time points.  This problem is characterized
by $\mathbf{P}=\{\text{exercised}, \text{unexercised}\}$ and
$\mathbf{A}=\{\text{exercise}, \text{don't exercise}\}$. At $P_t=$
``unexercised'', applying $a=$ ``exercise'' and $a=$ ``don't
exercise'' leads to $P_{t+1}=$ ``exercised'' and $P_{t+1}=$
``unexercised'', respectively with probability one. If $P_t=$
``exercised'', then $P_{t+1}$ = ``exercised'' almost surely regardless
of action $a$. Let $\Delta$ be the time step and represent the
interest rate per annum by $\kappa$ and underlying asset price by $z$.
Defining $(z)^+ = \max(z,0)$, the reward and scrap for the option are
given by
\begin{align*}
r_{t}(\text{unexercised}, z, \text{exercise}) &= e^{-\kappa \Delta t}(K -  z)^+\\
r_{T}(\text{unexercised}, z) &= e^{-\kappa \Delta T}(K-  z)^+
\end{align*}
for all $z \in \RR_{+}$ and zero for other $p\in\PPP$ and
$a\in\AAA$. The fair price of the option is
$$ 
v^*_{0}(\text{unexercised}, z_0) = \max\left\{\EE[e^{-\kappa \Delta \tau}(K-
  Z_{\tau})^+ ]: \tau = 0,1, \dots, T\right\}.
$$
The option is assumed to reside in the Black-Scholes world
where the asset price process $(Z_{t})_{t=0}^{T}$ follows geometric
Brownian motion i.e.
\begin{equation*}
  Z_{t+1}=  W_{t+1} Z_t = e^{(\kappa - \frac{\text{vol}^2}{2})\Delta + \text{vol}\sqrt{\Delta}N_{t+1}} Z_{t}
\end{equation*}
where $(N_{t})_{t=1}^{T}$ are independent standard normal random
variables and $\text{vol}$ is the volatility of stock returns. Note
that the disturbance is not controlled by action $a$ and so the
superscript is removed from $W^a_{t+1}$ for notational simplicity in
the following subsections.  The reward and scrap functions are convex
and Lipschitz continuous in $z$. It is not hard to see that under the
linear state dynamic for $(Z_t)_{t=0}^T$, the resulting expected value
functions and value functions are also convex, Lipschitz continuous,
and decreasing in $z$. In the following two subsections, two different
$\mathcal{S}_{\GGG^{(m)}}$ schemes are used to approximate these
functions.

\subsection{Approximation using tangents} \label{secTangents}

It is well known that a convex real valued function is differentiable almost
everywhere \cite[Theorem 25.5]{rockafellar} and so can be approximated
accurately on any compact set using a sufficient number of its
tangents. Suppose that convex function $h:\ZZZ \rightarrow \RR$ holds
tangents on each point in $\GGG^{(m)}$ given by
$\{h_1'(z), \dots, h_m'(z)\}$ and that the approximation scheme
${\mathcal S}_{\GGG^{(m)}}$ takes the maximising tangent to form a
convex piecewise linear approximation of $h$ i.e.
$$
{\mathcal S}_{\GGG^{(m)}}h(z) = \max\{h_{1}'(z),\dots, h_m'(z)\}.
$$  
It is not hard to see that the resulting approximation
${\mathcal S}_{\GGG^{(m)}}h$ is convex, piecewise linear, and
converges to $h$ uniformly on compact sets as $m\to\infty$. It is also
clear that
${\mathcal S}_{\GGG^{(m)}}h(z) \leq {\mathcal S}_{\GGG^{(m+1)}}h(z)
\leq h(z)$ for all $z\in\ZZZ$. Note that while the choice of a tangent
may not be unique at some set of points in $\ZZZ$, the uniform
convergence on compact sets of this scheme is not affected.
Assumption \ref{assSampleConvex} holds for any choice of grid and
disturbance sampling under the linear state dyanmics in Section
\ref{secPut}. To see this, note that if $v(p,z)$ is any function
convex in $z$ then $v(p,wz)$ is also convex in $z$ for any $w\in\WWW$.

As a demonstration, a Bermudan put option is considered with strike
price 40 that expires in 1 year. The put option is exercisable at 51
evenly spaced time points in the year, which includes the start and
end of the year. The interest rate and the volatility is set at $0.06$
p.a. and $0.2$ p.a., respectively. Recall that there are two natural
choices for the sampling $(W^{(n)}_{t+1}(k))_{k=1}^{n}$. The first
choice involves partitioning the disturbance space $\WWW = \RR_+$ and
second choice involves obtaining the disturbance sampling randomly via
Monte Carlo. Using the first approach, the space $\WWW = \RR_+$ is
partitioned into sets of equal probability measure and then the
conditional averages on each component are used for the sampling. The
use of anti-thetic disturbances in Monte Carlo sampling scheme is
found to lead to more balanced estimates. While it is clear that the
second choice of disturbance sampling is easier to implement, the
convergence seems to be slower as demonstrated by Figure
\ref{plotCompareDisturb}. In Figure \ref{plotCompareDisturb}, the size
of the disturbance sampling is varied from $1000$ to $10000$ in steps
of $1000$ under the two different sampling choices. The grid is given
by $401$ equally spaced grid points from $z=20$ to $z=60$. In the left
plot of Figure \ref{plotCompareDisturb}, the conditions in Theorem
\ref{lowerBound} are satisfied and so the estimates give a lower bound
for the true value. This lower bound increases with the size of the
sampling as proved in Thereom \ref{lowerBound}.  Despite the
difference in convergence speeds, Figure \ref{plotCompareValue} shows
that the two sampling schemes return very similar option value
functions for sufficently large disturbance samplings. The ticks on
the horizontal axis indicate the location of grid points. The curve in
the left plot gives a lower bounding function for the true value
function.

\begin{figure}[h!]
  \centering
  \includegraphics[height=2in,width=0.45\textwidth]{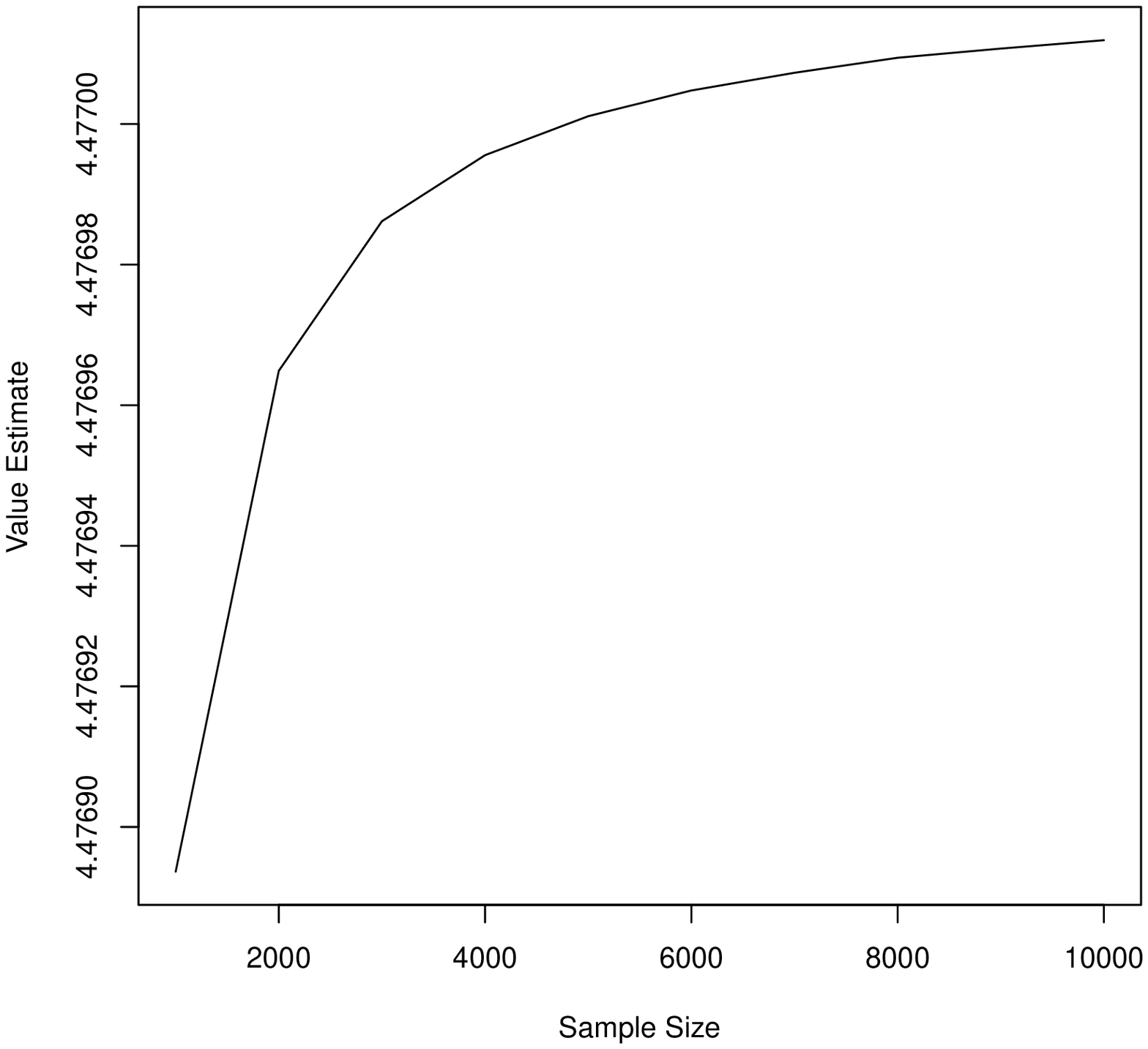}
  \includegraphics[height=2in,width=0.45\textwidth]{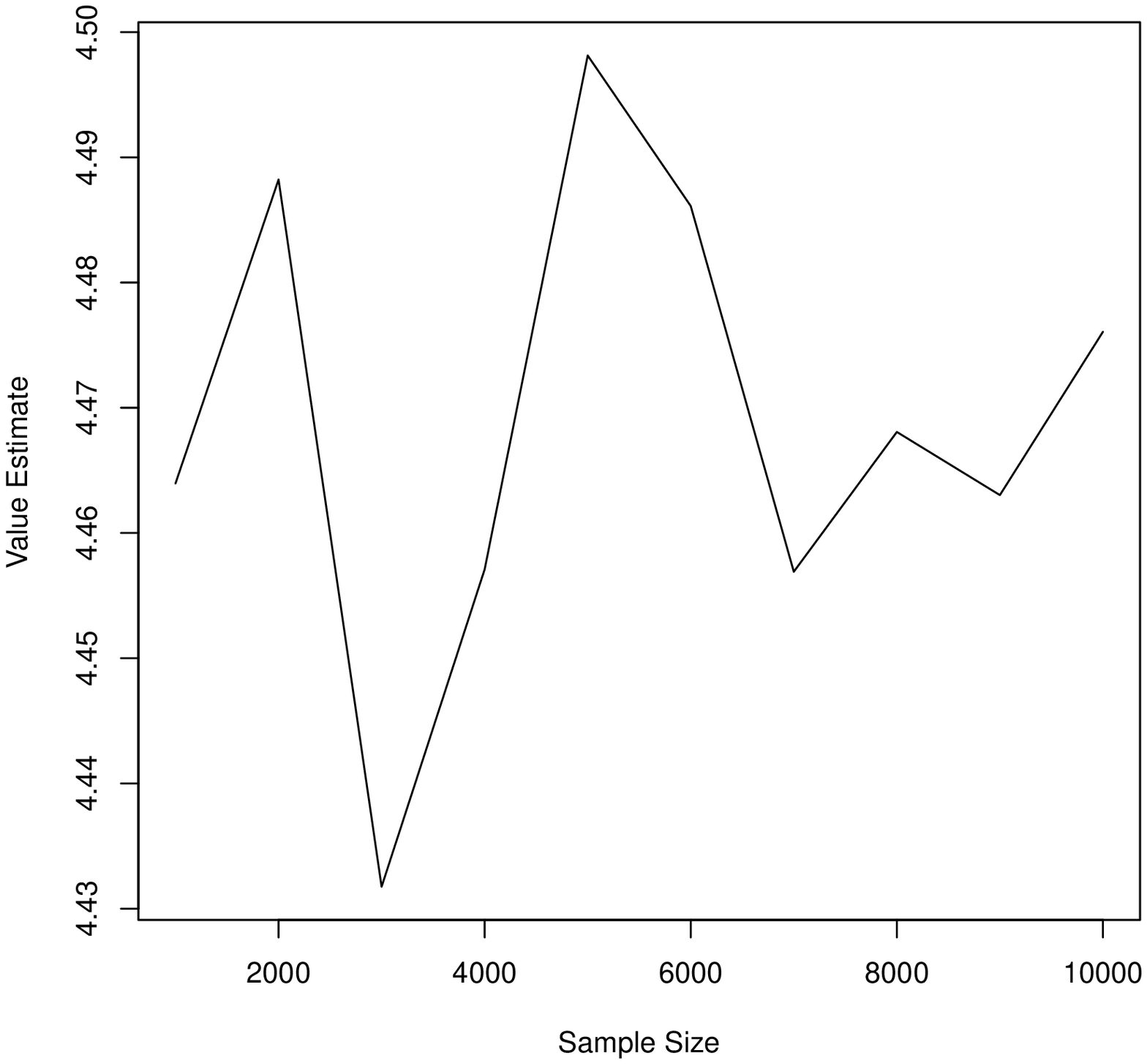}
  \caption{Estimate as sampling size is increased. Local
    averages (left) and Monte Carlo (right).}
  \label{plotCompareDisturb}
\end{figure}

\begin{figure}[h]
  \centering
  \includegraphics[height=2in,width=0.45\textwidth]{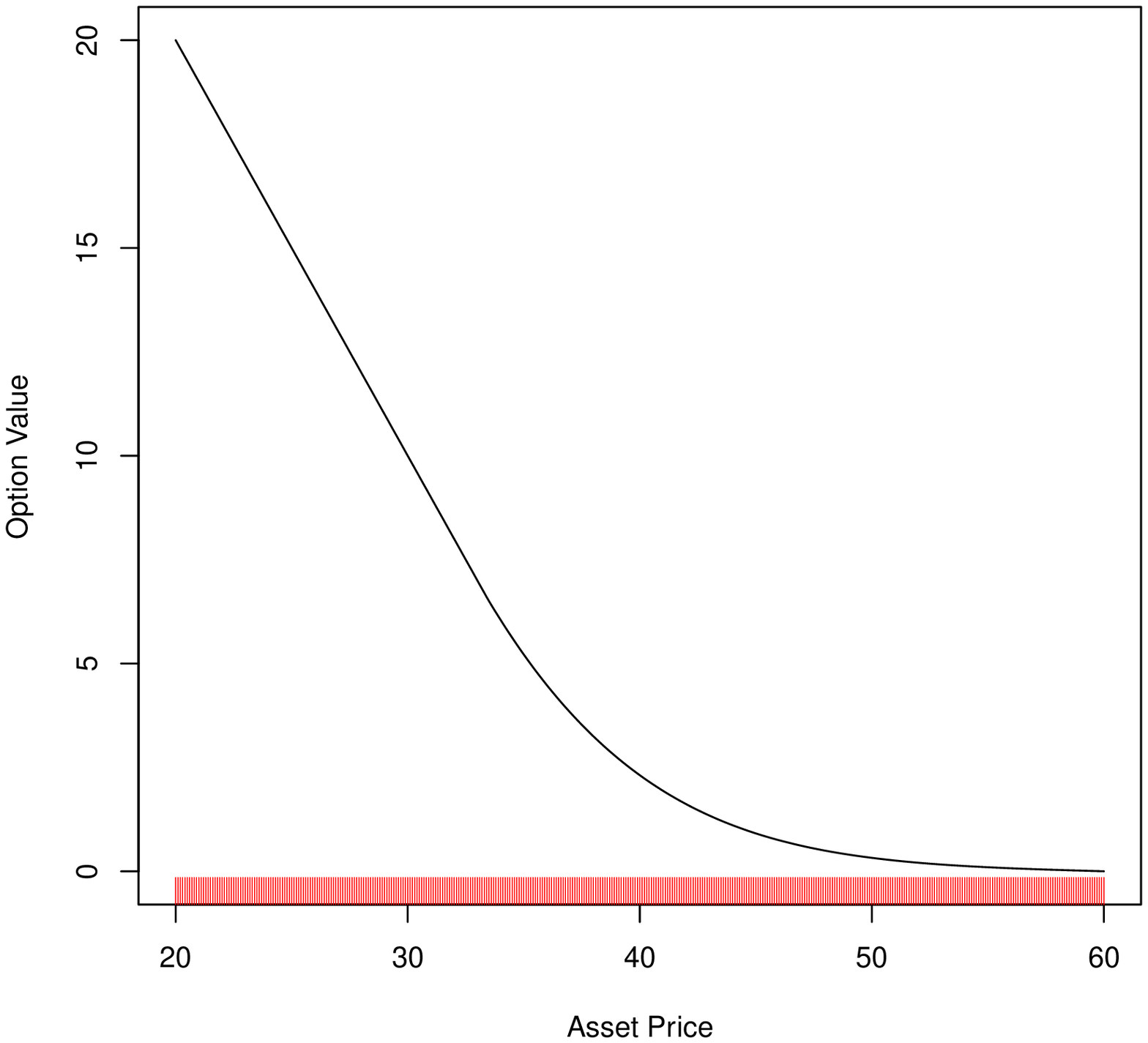}
  \includegraphics[height=2in,width=0.45\textwidth]{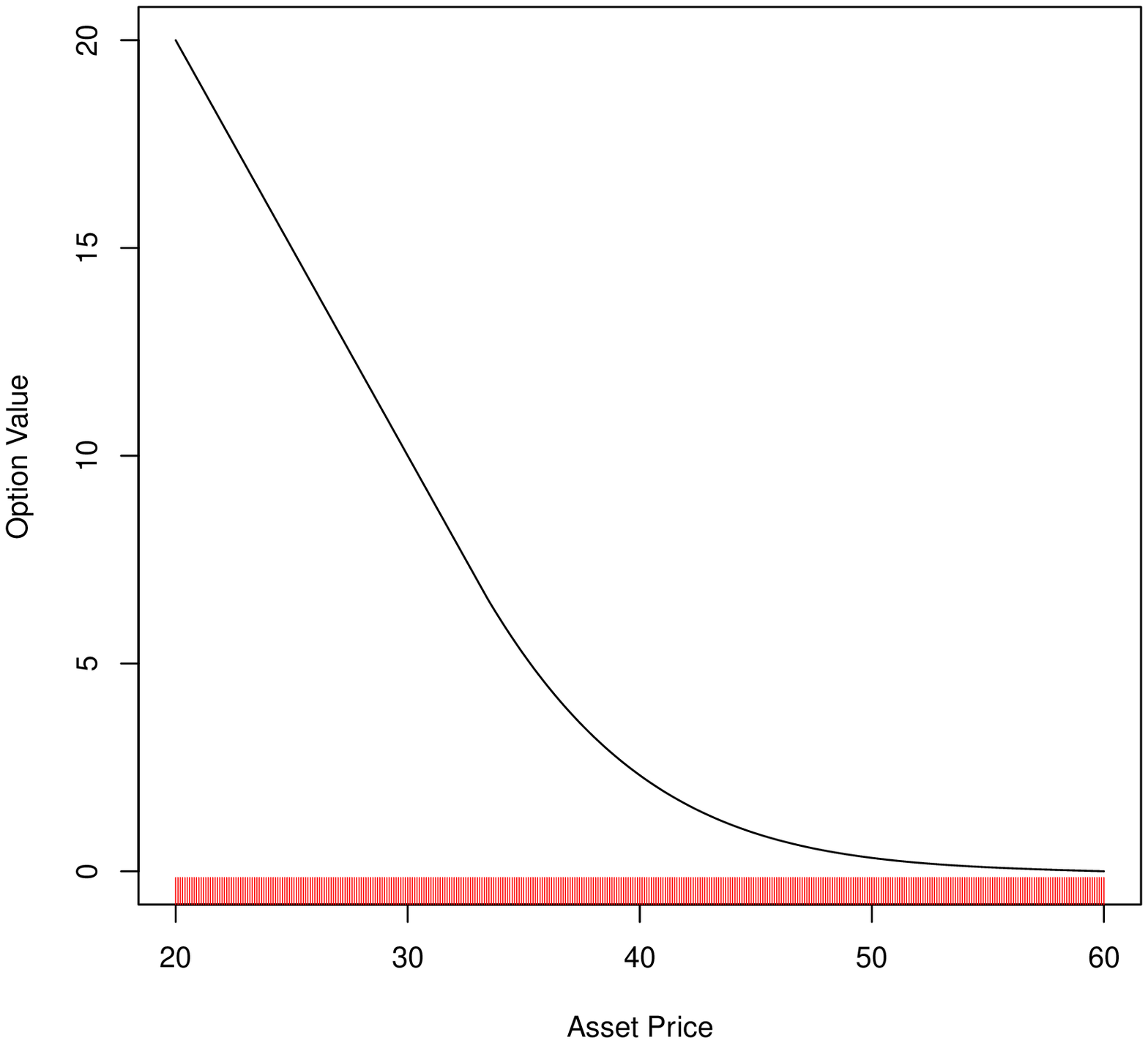}
  \caption{Function approximation with $n=10000$. Local averages
    (left) and Monte Carlo (right).}
  \label{plotCompareValue}
\end{figure}

\subsection{Upper bound via linear interpolation}

This subsection constructs upper bounding functions for the option
value using the approach presented in Section \ref{secUpper}.  While
the support of the distribution of $W_t$ is unbounded, it can be
approximated by a compact set $\overline{\WWW}$ containing
$99.9999999\%$ of the probability mass i.e.
$\PP(W_t \in \overline{\WWW}) = 0.999999999$ for all $t=1,\dots,T$. To
this end, introduce the truncated distribution $\overline\PP$ defined
by $\overline\PP(W_t\in\mathbf{B}) = \alpha \PP(W_t\in\mathbf{B})$ for
all $\mathbf{B}\subseteq\overline\WWW$ where
$\alpha = 1/\PP(W_t \in \overline{\WWW})$ is the normalizing constant.
In the following results, set $\overline{\WWW} = [0.841979,1.18958]$
and use disturbance space partitions consisting of $n$ convex
components of equal probability measure. Suppose that the extreme
points are ordered $e^{(n)}_{k,1} < e^{(n)}_{k,2}$ for all
$k=1,\dots,n$. For $k=1,\dots, n$, define points
$e^{(n)}_k = e^{(n)}_{\floor{(k+1)/2}, \floor{k+2-2\floor{(k+1)/2}}}$
and $e^{(n)}_{n+1} = e^{(n)}_{n,2}$ where $\floor{\hspace{1mm}}$
denotes the integer part. Defining partial expectations
$\Lambda_{t}(a,b) = \EE^{\overline\PP}[W_t \11(W_t \in[a,b])]$ and
using
$$
q^{(n)}_{t,k,1}(w) = \frac{e_{j,2} - w}{e_{j,2}-e_{j,1}} 
\quad \text{and} \quad
q^{(n)}_{t,k,2}(w) = \frac{w - e_{j,1}}{e_{j,2}-e_{j,1}} 
$$
one can determine
$$
\overline\PP\left(\overline{W}_{t}^{(n)} = e_1^{(n)}\right)
= \frac{\left(e^{(n)}_2/n - \Lambda_t(e^{(n)}_1,e^{(n)}_2) \right)}
{e^{(n)}_2-e^{(n)}_1}, 
$$
$$
\overline\PP\left(\overline{W}_{t}^{(n)} = e_{n+1}^{(n)}\right)
= \frac{\left(\Lambda_t(e^{(n)}_{n},e^{(n)}_{n+1}) - e^{(n)}_{n}/n \right)}
{e^{(n)}_{n+1}-e^{(n)}_n}, 
$$
$$
\overline\PP\left(\overline{W}_{t}^{(n)} = e_{j}^{(n)}\right) =
\frac{\left(e^{(n)}_{j+1}/n - \Lambda_t(e^{(n)}_{j},e^{(n)}_{j+1})
  \right)} {e^{(n)}_{j+1}-e^{(n)}_j} + \frac{\left(
    \Lambda_t(e^{(n)}_{j-1},e^{(n)}_{j}) - e^{(n)}_{j-1}/n \right)}
{e^{(n)}_{j}-e^{(n)}_{j-1}}
$$
for $j=2,\dots,n$.

Recall that the reward, scrap, and the true value functions are
Lipschitz continuous, convex, and non-increasing in $z$. Therefore,
there exists a $z'$ such that these functions are linear in $z$ when
$z < z'$ fro some $z'$. In fact, it is well known that
$v_t^*(\text{unexercised},z) =
r_t(\text{unexercised},z,\text{exercise})$ when $z < z'$ for
$t=0,\dots,T$ and some $z'$. Let us define our approximation scheme in
the following manner. Suppose $h:\ZZZ\to\RR$ is a convex function
non-increasing in $z$ and $h(z)=h'(z)$ when $z<z'$.  For
$\GGG^m = \{g^{(1)},\dots, g^{(m)}\}$ where $g^{(1)} \leq z'$ and
$g^{(1)}<g^{(2)}<\dots<g^{(m)}$, set
$$
\S_{\GGG^{(m)}}h(z) =
 \Bigg\{ \begin{array}{ll}
        h'(z) & \mbox{if $z \leq g^{(1)}$};\\
        d_i (z - g^{(i)}) +   h(g^{(i)}) & \mbox{if $g^{(i)} < z \leq g^{(i+1)}$};\\
        h(g^{(m)}) & \mbox{if $z > g^{(m)}$},\end{array} 
$$
where $d_i =\frac{h(g^{(i+1)}) - h(g^{(i)})}{g^{(i+1)} - g^{(i)}} $
for $i=2,\dots,m-1$. For $g^{(1)} \leq z \leq g^{(m)}$,
$\S_{\GGG^{(m)}}$ forms an approximation of $h$ via linear
interpolation. It is not hard to see that
$\S_{\GGG^{(m)}}h(z) \geq \S_{\GGG^{(m+1)}}h(z) \geq h(z)$ for all
$z\in\ZZZ$. It is also clear that Assumption \ref{assSampleConvex}
will hold for any grid and disturbance sampling due to the linear
dynamics of $Z_t$.  

\begin{figure}[h!]
  \centering
  \includegraphics[height=2in,width=0.45\textwidth]{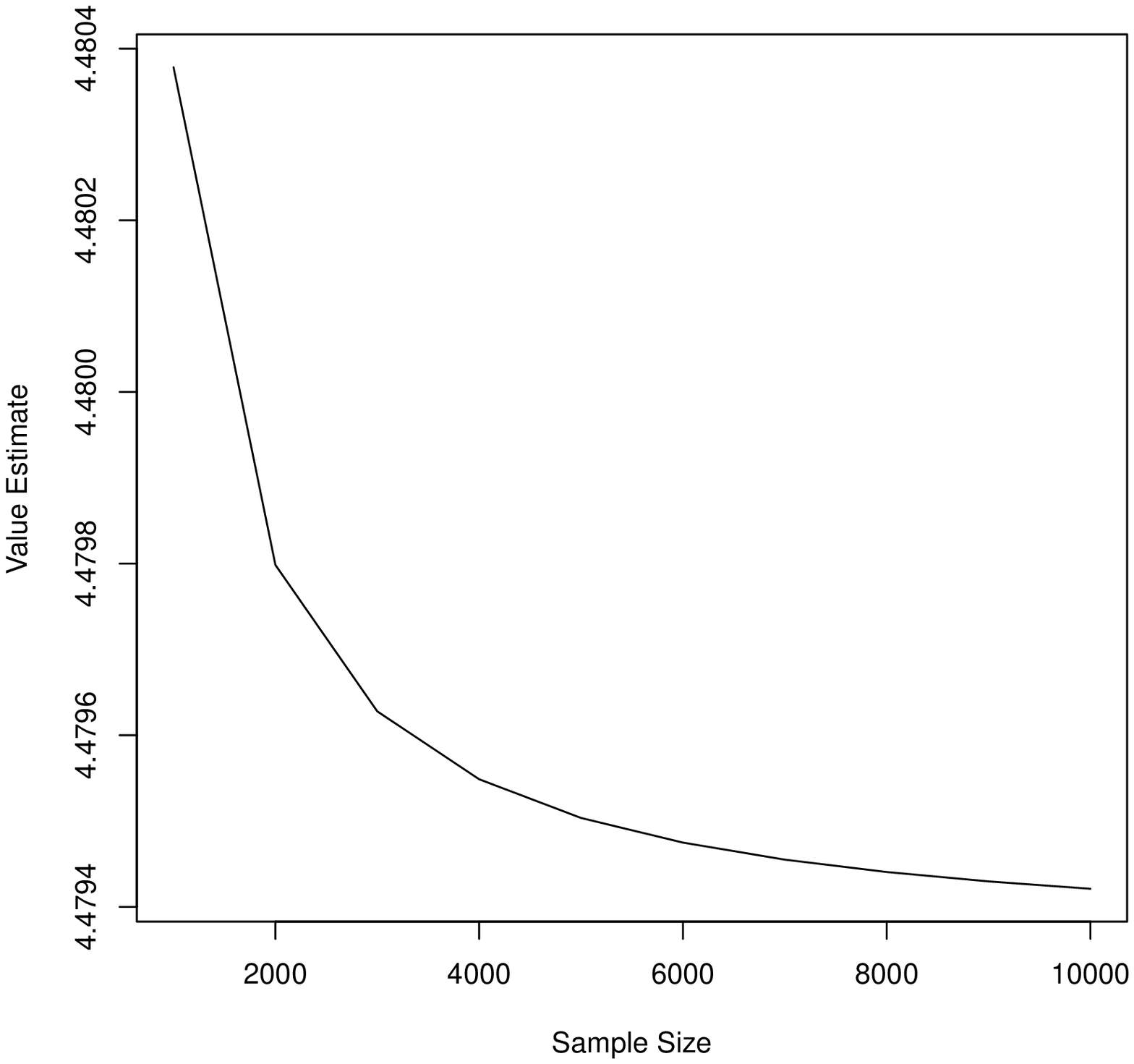}
  \includegraphics[height=2in,width=0.45\textwidth]{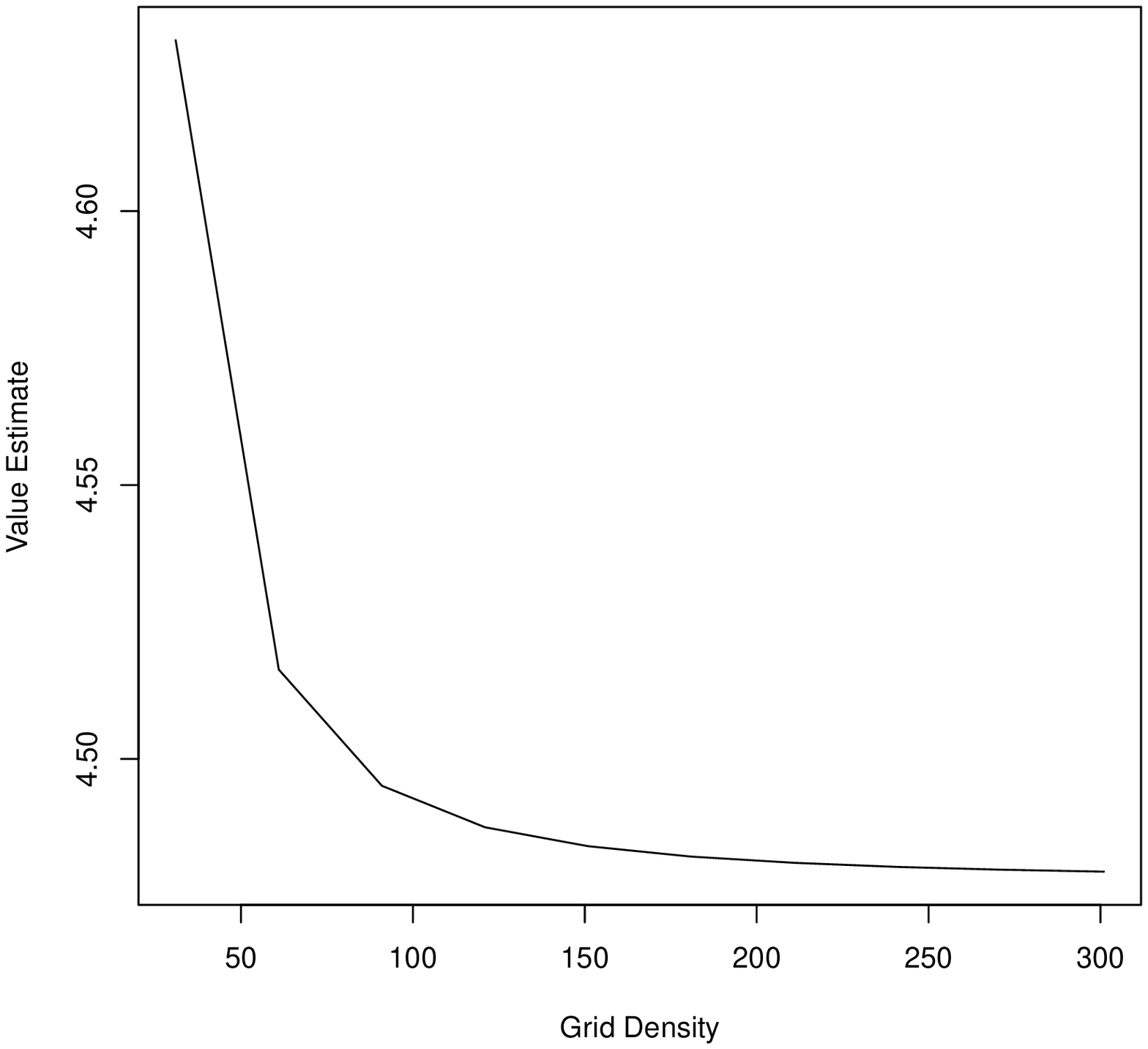}
  \caption{Price estimate 
    as disturbance sampling (left plot) or grid density (right plot)
    is increased.}
  \label{plotCompareUpper}
\end{figure}

Using the same put option paramters as before and
a grid with points equally spaced between $z=30$ and $60$, Figure
\ref{plotCompareUpper} examines the behaviour of the upper bound for
the option with $Z_0=36$ as we increase the size of the partition $n$
or the grid density $m$. In the left plot, the size of the partition
$n$ is increased for fixed $m=301$. Similarly, the grid density is
increased in the right plot for fixed $n=10000$. Observe that the
bounds is decreasing in both $m$ and $n$ as proved by Theorem
\ref{valueConvergeUpper}.

\subsection{Accuracy and speed}
Table \ref{tablePut} gives points on the lower and upper bounding
functions for the fair price of the option at different starting asset
prices and expiry dates. Columns $2$ to $4$ give an option with the 1
year expiry. A grid of $301$ equally spaced points from $z=30$ to
$z=60$ is used. Columns $5$ to $7$ gives an option expiring in 2 years
and is exerciseable at 101 equally spaced time points including the
start. A wider grid of $401$ equally spaced points from $z=30$ to
$z=70$ is used to account for the longer time horizon. For both, a
disturbance partition of size $n=1000$ is used. The other option
parameters (e.g. interest rate) remain the same as before. Tighter
bounding functions can be obtained by increasing the grid density $m$
or size of the disturbance sampling $n$. This is illustrated by
columns $8-10$ where we revisit the 1 year expiry option with $4001$
equally spaced grid points from $z=30$ to $z=70$ and a disturbance
partition of $n=20000$ is used.

\begin{table}[h]
  \caption{Bermuda put valuation with $vol=0.2$.
    \label{tablePut}}
  \centering
 \setlength\tabcolsep{3.5pt} 
  \begin{tabular}{l|ccc|ccc|ccc}
    &\multicolumn{3}{c}{Expiry 1 year}&\multicolumn{3}{c}{Expiry 2 years} &\multicolumn{3}{c}{Dense, Expiry 1 year}\\
    $Z_0$ & Lower & Upper & Gap & Lower & Upper & Gap & Lower & Upper & Gap\\
    \hline
    32 & 8.00000 & 8.00000 & 0.00000 & 8.00000 & 8.00000 & 0.00000 & 8.00000 & 8.00000 & 0e+00 \\
    34 & 6.05155 & 6.05318 & 0.00163 & 6.22898 & 6.23254 & 0.00356 & 6.05198 & 6.05201 & 2e-05 \\
    36 & 4.47689 & 4.48038 & 0.00348 & 4.83885 & 4.84435 & 0.00550 & 4.47780 & 4.47785 & 5e-05 \\
    38 & 3.24898 & 3.25347 & 0.00450 & 3.74319 & 3.74964 & 0.00645 & 3.25011 & 3.25018 & 7e-05 \\
    40 & 2.31287 & 2.31766 & 0.00479 & 2.88294 & 2.88965 & 0.00670 & 2.31405 & 2.31413 & 8e-05 \\
    42 & 1.61582 & 1.62047 & 0.00465 & 2.21077 & 2.21735 & 0.00658 & 1.61696 & 1.61704 & 8e-05 \\
    44 & 1.10874 & 1.11311 & 0.00437 & 1.68826 & 1.69456 & 0.00630 & 1.10985 & 1.10993 & 8e-05 \\
    46 & 0.74795 & 0.75217 & 0.00423 & 1.28419 & 1.29023 & 0.00604 & 0.74915 & 0.74922 & 7e-05 \\
  \end{tabular}
\end{table}

The results in Table \ref{tablePut} were computed using the R script
provided in the appendix.
On a Linux Ubuntu 16.04 machine with Intel i5-5300U CPU @2.30GHz and
16GB of RAM, it takes roughly 0.15 cpu seconds to generate each of the
bounding functions represented by columns $2$ and $3$. For expiry 2
years, it takes around 0.20 cpu seconds to generate each bounding
function.  For columns $8$ or $9$, it takes around $40$ cpu seconds.
Note that the numerical methods used are highly parallelizable. The
code uses some multi-threaded code and the times can be reduced to
between 0.03 - 0.10 real world seconds to generate columns $2,3,5$,
and $6$ each on four CPU cores. For columns $8$ and $9$, the times are
reduced to around $10$ real world seconds.  Please note that faster
computational times can be attained by a more strategic placement of
the grid points. The reader is invited to replicate these results
using the $R$ script provided in the appendix. Now given the excellent
quality of the value function approximations, the optimal policy can
then be obtained via \eqref{optimal_policy}. This is demontrated by
Figure \ref{plotComparePut} where the right plot gives the optimal
exercise boundary. If at any time the price of the underlying asset is
below the cutoff, it is optimal to exercise the option.

\begin{figure}[h!]
  \centering
  \includegraphics[height=2in,width=0.45\textwidth]{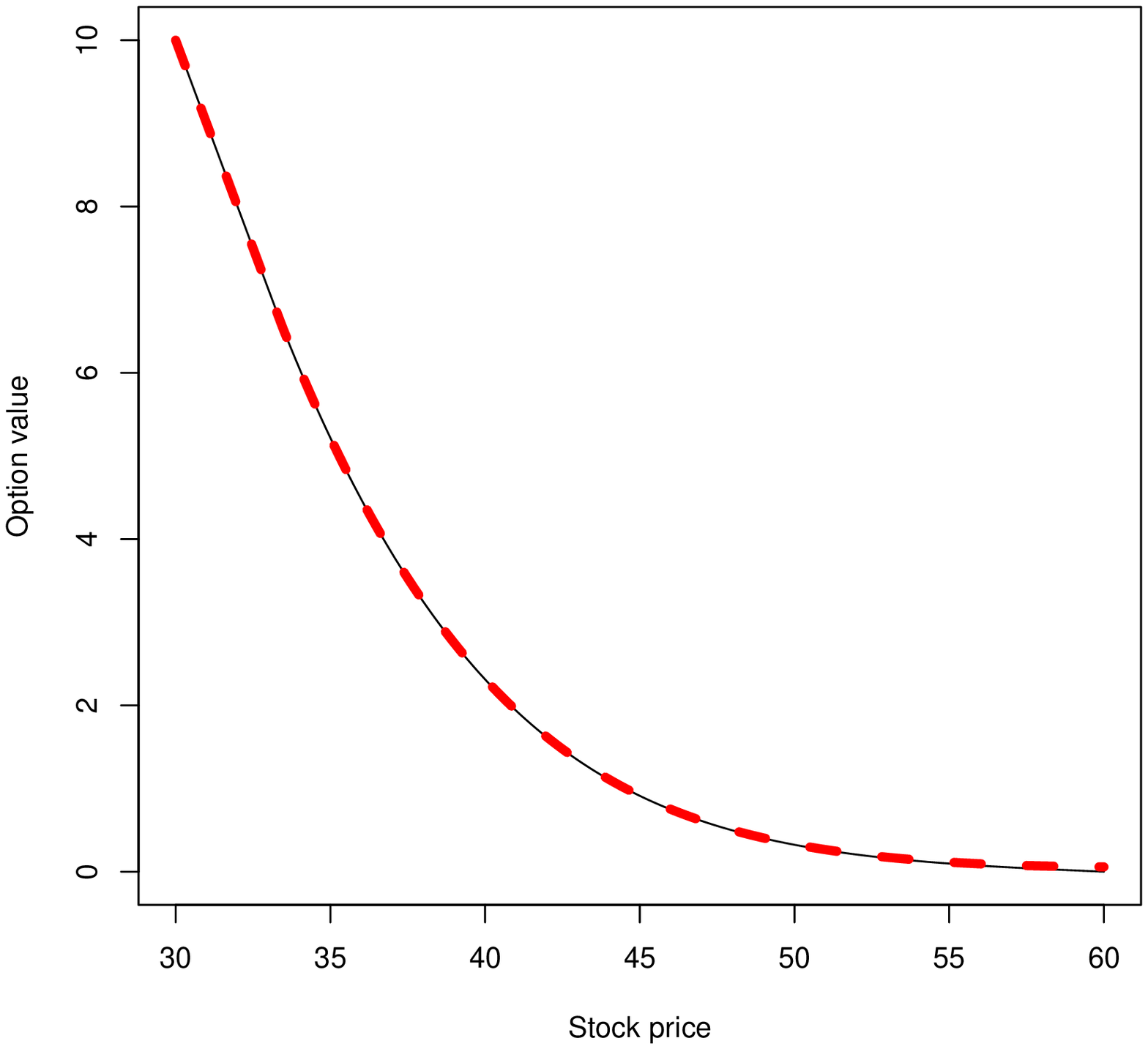}
  \includegraphics[height=2in,width=0.45\textwidth]{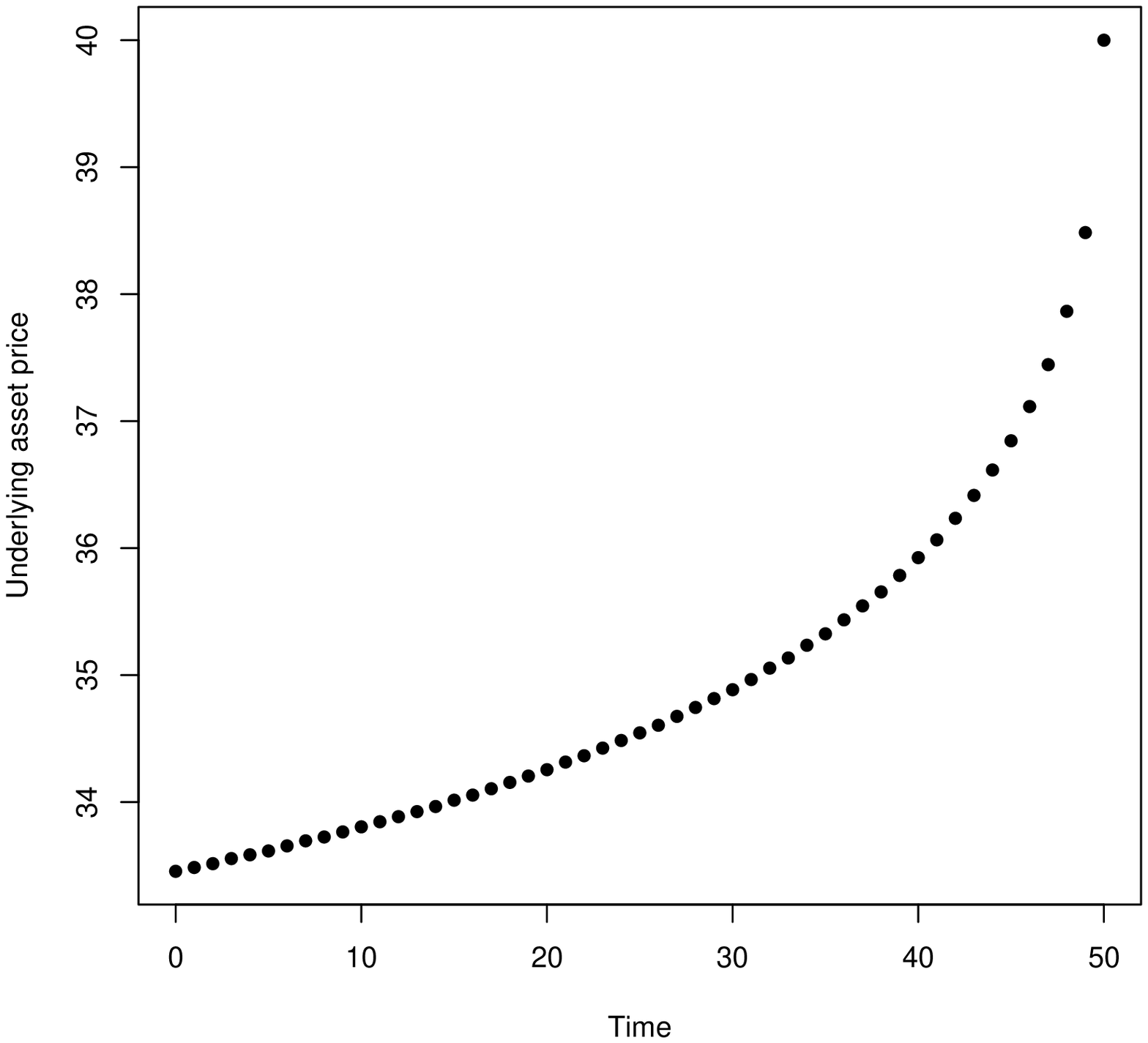}
  \caption{Left plot gives the lower and upper bounding functions for
    option price with $vol=0.2$ and $1$ year expiry. The dashed lines
    indicate the upper bound while the unbroken curves give the lower
    bounds. The optimal exercise boundary is given in the right plot.}
  \label{plotComparePut}
\end{figure}

\begin{figure}[h!]
  \centering
  \includegraphics[height=2in,width=0.45\textwidth]{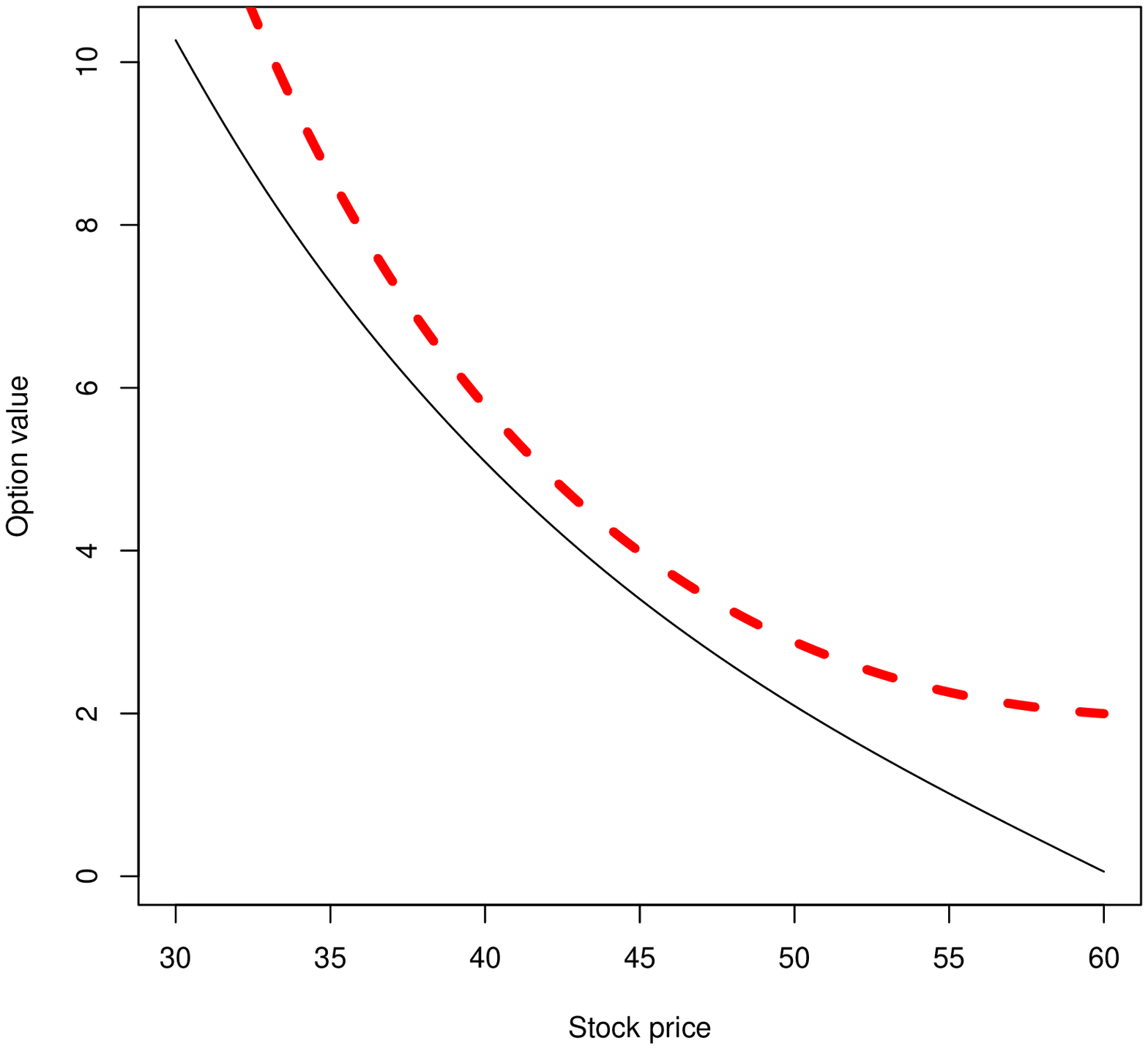}
  \includegraphics[height=2in,width=0.45\textwidth]{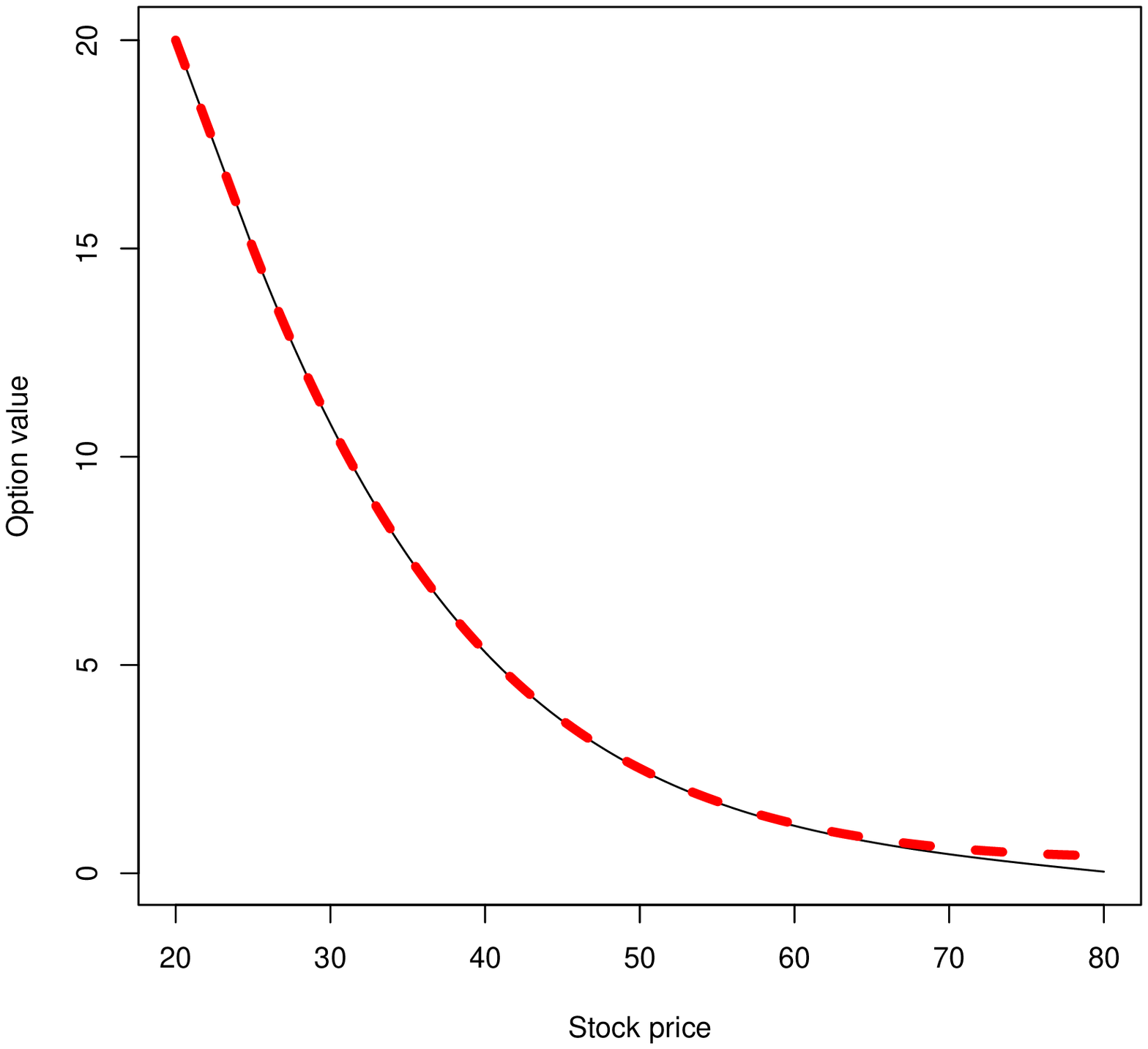}
  \caption{Lower and upper bounding functions for option price with 1
    year expiry and $vol=0.4$. The left plot uses the same grid as
    before while the right uses the wider grid. The dashed lines
    indicate the upper bound while the unbroken curves give the lower
    bound.}
  \label{plotComparePut2}
\end{figure}

For the 1 year expiry option, suppose the volatitilty is doubled to
$vol = 0.4$. Using the same grid and disturbance sampling from columns
2 and 3 leads to poor bounding functions as illustrated by the left
plot of Figure \ref{plotComparePut2}. However, spreading the same
number of grid points evenly between $z=20$ and $z=80$ leads to better
bounds as shown by the right plot. The computational times remain the
same as before. Therefore, the tightness of the bounds can be achieved
by the simple modification of either the grid or disturbance sampling
and their adjustments can be done independently of each other as shown
in Theorem \ref{valueConvergePW} and Theorem \ref{valueConvergeUpper}.
This is in contrast to the popular least squares Monte carlo method
where the size of the regression basis should not grow independently
of the number of simulated paths
\cite{glasserman_yu2004}. 

\section{Conclusion} \label{secConclusion}

This paper studies the use of a general class of convex function
approximation to estimate the value functions in Markov decision
processes. The key idea is that the original problem is approximated
with a more tractable problem and under certain conditions, the
solutions to this modified problem converge to their original
counterparts.  More specifically, this paper has shown that these
approximations may converge uniformly to their true unknown
counterparts on compact sets under different sampling schemes for the
driving random variables. Exploiting further conditions leads to
approximations that form either a non-decreasing sequence of lower
bounding functions or a non-increasing sequence of upper bounding
functions. Numerical results then demonstrate the speed and accuracy
of a proposed approach involving piecewise linear functions. While the
focus of this paper has been numerical work, one can in principle
replace the original problem with a more analytically tractable
problem and obtain the original solution by considering the limits.

The starting state $X_0 = x_0$ for the decision problem was assumed to
be known with certainity. Suppose this is not true and $X_0$ is
distributed with distribution $\PP_{X_0}$. Then, the value function
for this case can be obtained simply via
$$
\int_{\XXX} v^*_0(x')\PP_{X_0}(\mathrm{d} x')
$$
where $v_0^*$ is obtained assuming the starting state is known.  Now
note that the insights presented in this paper can be adapted to
problems where the functions in the Bellman recursion are not convex
in $z$. For example, it is not hard to see that they can be easily
modified for minimization problems involving concave functions. That
is, problems of the form
$$
{\mathcal T}_{t}v(x)=\min_{a \in \AAA} (r_{t}(x,a)+ {\mathcal
  K}^{a}_{t}v(x))
$$
where the scrap and reward functions are concave in $z$ and the
transition operator preserves concavity. To see this, note that the
sum of concave functions is concave and the pointwise minimum of
concave functions is also concave.  Finally, the methods shown in this
paper has been adapted to infinite time horizon contracting Markov
decision processes in \cite{yee_infinite}. Extensions to partially
obeservable Markov decision processes will be considered in future
research.

\section*{Acknowledgements}
The author would like to thank Juri Hinz, Onesimo Hernandez-Lerma, and
Wolfgang Runggaldier for their help.

\appendix
\section{R Script for Table \ref{tablePut}}

The script (along with the R package) used to generate columns 2, 3,
and 4 in Table \ref{tablePut} can be found at
\url{https://github.com/YeeJeremy/ConvexPaper}.  To generate the
others, simply modify the values on Line 6, Line 10, Line 11, and/or
Line 14.

\bibliography{main}
\bibliographystyle{amsplain}

\end{document}